\tikzstyle{bsq}=[rectangle, draw, thick, minimum width=.5cm, minimum height=.5cm]
\tikzstyle{bver}=[rectangle, draw, thick, minimum width=1cm, minimum height=2cm]
\tikzstyle{bhor}=[rectangle, draw, thick, minimum width=2cm, minimum height=1cm]
\tikzstyle{divisor}=[circle,very thick,draw,scale=0.3,fill=white]
\tikzstyle{vertex}=[circle,draw,scale=0.3,fill=black]
\newtheorem{theorem}{Theorem}[section]
\newtheorem{lemma}[theorem]{Lemma}
\newtheorem{corollary}[theorem]{Corollary}
\newtheorem{proposition}[theorem]{Proposition}
\newtheorem{question}[theorem]{Question}
\newtheorem{problem}[theorem]{Problem}
\newtheorem{varexample}[theorem]{Example}
\newtheorem*{theorem*}{Theorem}
\theoremstyle{definition}
\newtheorem{remark}[theorem]{Remark}
\newtheorem{notation}[theorem]{Notation}
\newtheorem{definition}[theorem]{Definition}
\newcommand{\Spec}{\mathrm{Spec}\,}
\def\AA{\mathbb{A}}
\def\ZZ{{\mathbb Z}}
\def\GG{{\mathbb G}} %
\def\PP{{\mathbb P}}
\def\VV{{\mathbb V}}
\def\cV{\mathcal{V}}
\def\fX{\mathfrak{X}}
\def\supp{\mathrm{supp}}
\newcommand{\RR}{\mathbb{R}}
\newcommand{\Rbar}{\overline{\mathbb{R}}}
\newcommand{\cL}{\mathcal{L}}
\newcommand{\cO}{\mathcal{O}}
\newcommand{\cB}{\mathcal{B}}
\newcommand{\cF}{\mathcal{F}}
\newcommand{\ord}{\operatorname{ord}}
\newcommand{\Trop}{\operatorname{Trop}}
\newcommand{\trop}{\operatorname{trop}}
\newcommand{\ddiv}{\operatorname{div}}
\newcommand{\Div}{\operatorname{Div}}
\newcommand{\PL}{\operatorname{PL}}
\newcommand{\val}{\operatorname{val}}
\newcommand{\Pic}{\operatorname{Pic}}
\newcommand{\Sym}{\operatorname{Sym}}
\newcommand{\outdeg}{\mathrm{outdeg}}
\newcommand{\relint}{\operatorname{relint}}
\newcommand{\Star}{\operatorname{Star}}
\newcommand{\BN}{\mathrm{BN}}
\newcommand{\Real}{\operatorname{Real}}
\newcommand{\ind}{\mathrm{ind}}
\newcommand{\td}[1]{\widetilde{#1}}
\newcommand{\an}{\mathrm{an}}
\newenvironment{example}{\begin{varexample}
\begin{normalfont}}{\end{normalfont}
\end{varexample}}
\begin{document}

\title{Tropical linear series and matroids}

\author[C.-W. Chang]{Chih-Wei Chang}\address{Chih-Wei Chang, Department of Mathematics, National Taiwan University, Taipei, 106, Taiwan}\email{cwchang0219@ntu.edu.tw}
\author[M. Dupraz]{Matthew Dupraz} \address{Matthew Dupraz, Institute of Mathematics, Free University of Berlin} \email{matthew.dupraz@fu-berlin.de}
\author[H. Iriarte]{Hernan Iriarte}\address{Hernan Iriarte, Department of Mathematics, University of Texas at Austin}\email{iriarte@utexas.edu}
\author[D. Jensen]{David Jensen}\address{David Jensen, Department of Mathematics, University of Kentucky
}\email{dhje223@uky.edu}
\author[D. Karp]{Dagan Karp}\address{Dagan Karp, Department of Mathematics, Harvey Mudd College
}\email{dkarp@hmc.edu}
\author[S. Payne]{Sam Payne}\address{Sam Payne, Department of Mathematics, University of Michigan}\email{sdpayne@umich.edu}
\author[J. Wang]{Jidong Wang}\address{Jidong Wang, Department of Mathematics, University of Texas at Austin}\email{jidongw@utexas.edu}

\bibliographystyle{alpha}

\begin{abstract}
We study a notion of tropical linear series on metric graphs that combines two essential properties of tropicalizations of linear series on algebraic curves: the Baker--Norine rank and the independence rank. Our main results relate the local and global geometry of these tropical linear series to the combinatorial geometry of matroids and valuated matroids, respectively. As an application, we characterize exactly when the tropicalization of the canonical linear series on a single curve is equal to the locus of realizable tropical canonical divisors determined by M\"oller, Ulirsch, and Werner. We also illustrate our results with a wealth of examples; in particular, we show that the Bergman fan of every matroid appears as the local fan of a tropical linear series on a metric graph. The paper concludes with a list of ten open questions for future investigation.
\end{abstract}

\maketitle

\setcounter{tocdepth}{1}
\tableofcontents

\section{Introduction}
\label{Sec:Intro}

\subsection{Baker--Norine rank and tropical independence} Nearly two decades ago, Baker and Norine introduced the rank of a divisor on a graph, in close analogy with the rank of a divisor on an algebraic curve, and proved the surprising and beautiful fact that it satisfies a precise analog of the Riemann--Roch Theorem \cite{BakerNorine07}.  This breakthrough inspired several new directions of research.  Baker proved the Specialization Lemma \cite{Baker08} relating ranks of divisors on curves to those on graphs via semistable degenerations, and outlined a program for relating divisor theory on graphs to the celebrated results of Brill-Noether theory on algebraic curves.  All of these results extend naturally to tropical curves, meaning metric graphs \cite{GathmannKerber08, MikhalkinZharkov08}, with specialization given by retraction to the skeleton of the Berkovich analytifications of curves over valued fields. Subsequent progress in Baker's program includes tropical proofs of the Brill--Noether and Gieseker--Petri Theorems \cite{tropicalBN, tropicalGP}. The latter introduced \emph{tropical independence} as a technique to bound the ranks of multiplication maps on algebraic linear series.

\medskip

Further applications of tropical geometry to linear series on algebraic curves and the Kodaira dimensions of moduli spaces \cite{M13, M23} use not only the Baker--Norine rank and tropical independence, but also more intricate combinatorial properties of tropicalizations of linear series rooted in linear incidence geometry. The ad hoc notion of  an abstract tropical linear series given by \cite[Definition~6.5]{M23} captures exactly the properties used for these specific applications, including two properties from linear incidence geometry that make the definition recursive. The recursive part of the definition is exceedingly difficult to verify for anything that does not arise as the tropicalization of an algebraic linear series.

\medskip

Here, we study a larger class of \emph{tropical linear series} characterized only in terms of the Baker--Norine rank and tropical independence; see \Cref{def:tropical-linear-series}. We now refer to the subclass of tropical linear series that satisfy the additional recursive properties in \cite[Definition~6.5]{M23} as \emph{strongly recursive tropical linear series}. The broader combinatorial theory of tropical linear series studied here is rich with examples associated to valuated matroids. %
It also offers insights into the geometry of tropicalizations of algebraic linear series. For instance, it had been an open problem whether strongly recursive tropical linear series are equidimensional \cite[Question~6.3(2)]{JensenPayne22}, until equidimensionality was proved more generally for all tropical linear series in the Master's thesis of the second author \cite{DuprazMasters}. In \Cref{sec:localdim}, we present a brief and streamlined version of Dupraz's original proof. %

\subsection{Tropical linear series} Let $\Gamma$ be a metric graph. A divisor $D$ on $\Gamma$ is a finite formal sum
\[
D = a_1 p_1 + \cdots + a_s p_s
\]
with $a_i \in \ZZ$ and $p_1, \ldots, p_s$ distinct points in $\Gamma$.  The \emph{multiplicity} of $D$ at $p_i$ is $a_i$ and the degree of $D$ is $\deg(D) := a_1 + \cdots + a_s$. The divisor $D$ is \emph{effective}, denoted $D \geq 0$, if its multiplicity is non-negative at every point, i.e., if $a_i \geq 0$ for all $i$. Let $\PL(\Gamma)$ denote the set of piecewise-linear functions on $\Gamma$ with integer slopes; it is an additive group with respect to addition and a tropical module with respect to scalar addition and pointwise minimum. There is an additive group homomorphism from $\PL(\Gamma)$ to the group of divisors on $\Gamma$ taking a piecewise-linear function $\varphi$ to the degree zero divisor $\ddiv(\varphi)$ whose multiplicity at a point $p$ is the sum of the incoming slopes of $\varphi$ at $p$.  Then
\[
R(D) := \{ \varphi \in \PL(\Gamma) : D + \ddiv(\varphi) \geq 0 \}
\]
is a finitely generated tropical submodule
by \cite[Corollary 9]{HMY12}.
The rank $r(D)$ is the largest integer $r$ such that, for every effective divisor $D'$ of degree $r$, there is some $\varphi \in R(D)$ such that $D - D' +  \ddiv(\varphi) \geq 0$ \cite{BakerNorine07}. This notion generalizes to tropical submodules of $R(D)$ as follows.

\begin{definition}
\label{def:submodule-rank}
Let $\Sigma \subseteq R(D)$ be a tropical submodule. The \emph{Baker--Norine rank} $r_{\BN}(\Sigma)$ is the largest integer $r$ such that, for every effective divisor $D'$ of degree $r$ on  $\Gamma$, there is some $\varphi \in \Sigma$ such that $D -D' + \ddiv(\varphi) \geq 0$.
\end{definition}

\noindent We also consider a second notion of rank for tropical submodules $\Sigma \subseteq R(D)$, based on tropical independence, as in \cite{tropicalGP, JensenPayne16}; we recall the definition of tropical independence in  \Cref{sec:independence}.

\begin{definition} \label{def:indep-rank}
Let $\Sigma \subseteq R(D)$ be a tropical submodule. The \emph{independence rank} $r_{\ind}(\Sigma)$ is the size of the largest tropically independent subset of $\Sigma$.
\end{definition}

Definitions~\ref{def:submodule-rank} and \ref{def:indep-rank} are motivated by tropicalizations of linear series on algebraic curves. Recall that a linear series on a projective algebraic curve $X$ is a pair $(D_X, V)$, where $D_X$ is a divisor and $V \subseteq H^0(X, \cO(D_X))$ is a linear subspace. We follow the common convention that the \emph{dimension} of an algebraic linear series refers to the dimension of the associated projective space $\PP(V)$, i.e., if $(D_X,V)$ is a linear series of dimension $r$ then $V$ is a vector space of dimension $r + 1$. Note that $r_{\BN}$ is an analog of $\dim \PP(V)$, whereas $r_{\ind}$ is an analog of $\dim V$.  More precisely, if $(D, \Sigma)$ is the tropicalization of a linear series $(D_X, V)$ of dimension $r$ then
\begin{equation} \label{eq:ranks}
    r_{\BN}(\Sigma) =  r \quad  \mbox{ and } \quad r_{\ind}(\Sigma) = r + 1.
\end{equation} 

\begin{remark}
To see why \eqref{eq:ranks} holds, first note that $r_{\BN}(\Sigma) \geq r$, by specialization  \cite{Baker08}, and $r_\ind(\Sigma) \leq r + 1$, because any lift of a tropically independent subset of $\Sigma$ is linearly independent in $V$. Next, observe that $|\Sigma| := \{ D + \ddiv(\varphi) : \varphi \in \Sigma \}$, 
is a polyhedral subset of $\Sym^d(\Gamma)$ of dimension at least $r_{\BN}(\Sigma)$. Finally, for any tropical submodule $\Sigma$ with polyehdral tropicalization, we have $r_\ind(\Sigma) = \dim|\Sigma| + 1$ (\Cref{cor:tropical-rank-properties-3}). A similar argument shows that $r_{\ind}(\Sigma) \geq r_{\BN}(\Sigma) + 1$ for any tropical submodule of $R(D)$ with polyhedral projectivization.
\end{remark}

\begin{definition}\label{def:tropical-linear-series}
    A \emph{tropical linear series} on a metric graph $\Gamma$ is a pair $(D,\Sigma)$ where $D$ is a divisor and $\Sigma \subseteq R(D)$ is a finitely generated submodule such that $r_{\ind}(\Sigma) = r_{\BN}(\Sigma) + 1$.
\end{definition}

\noindent As explained in \Cref{sec:proj-trop-modules}, the projectivization of a finitely generated tropical submodule $\Sigma \subseteq R(D)$ 
is naturally identified with
\[
|\Sigma| := \{ D + \ddiv(\varphi) : \varphi \in \Sigma \},
\]
which is a polyhedral subset of $\Sym^d(\Gamma)$, for $d = \deg(D)$.  The global dimension $\dim|\Sigma|$ is equal to $r_{\ind}(\Sigma) - 1$ (\Cref{prop:tropical-rank-properties-2}) and greater than or equal to $r_{\BN}(\Sigma)$. Thus, \Cref{def:tropical-linear-series} says that $\Sigma$ is a tropical linear series if its dimension is no larger than necessary, given its Baker--Norine rank.

Note that the ranks $r_{\BN}(\Sigma)$ and $r_{\ind}(\Sigma)$ are independent of the choice of divisor $D$ such that $\Sigma \subseteq \PL(\Gamma)$ is contained in $R(D)$.
When no confusion seems possible, we omit the divisor and refer to $\Sigma$ as a tropical linear series. 

\begin{theorem} \label{thm:puredim}
Let $\Sigma$ be a tropical linear series.  Then $|\Sigma|$ has pure dimension equal to $r_{\BN}(\Sigma)$.
\end{theorem}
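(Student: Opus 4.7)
By \Cref{prop:tropical-rank-properties-2}, we have $\dim|\Sigma| = r_\ind(\Sigma)-1$, and the hypothesis $r_\ind(\Sigma)=r_\BN(\Sigma)+1$ immediately gives $\dim|\Sigma| = r := r_\BN(\Sigma)$. The substance of the theorem is therefore purity, which I will argue by contradiction. Suppose $F\subseteq|\Sigma|$ is a maximal face with $\dim F = k<r$, and fix $E_0$ in the relative interior of $F$. By maximality of $F$, the set $|\Sigma|$ coincides with $F$ in some polyhedral neighborhood $U$ of $E_0$, so $\dim U = k$.

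The strategy is to use Baker--Norine to force an $r$-dimensional family of degree-$r$ effective divisors to be dominated only by divisors in $U$, contradicting $\dim U = k<r$. Writing $E_0=\sum_{i=1}^s a_i p_i$, I choose a degree-$r$ subdivisor $D_0'=\sum r_i p_i\le E_0$ and form the local incidence
\[
W := \bigl\{(E,D')\in U\times\Sym^r(\Gamma) : D'\le E\bigr\}.
\]
The projection $\pi_1\colon W\to U$ has finite fibers (at most $\prod_i\binom{a_i}{r_i}$ preimages over each $E$), so $\dim W \le k$, and consequently the image $\pi_2(W)\subseteq\Sym^r(\Gamma)$ has dimension at most $k<r$. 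In particular, $\pi_2(W)$ contains no open subset of $\Sym^r(\Gamma)$.

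To reach a contradiction it therefore suffices to exhibit an open neighborhood of $D_0'$ in $\Sym^r(\Gamma)$ contained in $\pi_2(W)$. I would proceed in two steps. First, choose $D_0'$ sufficiently generic inside $E_0$ so that $E_0$ is the \emph{unique} element of $|\Sigma|$ dominating $D_0'$; this relies on the observation that the global incidence $W_r=\{(E,D')\in|\Sigma|\times\Sym^r(\Gamma):D'\le E\}$ has dimension $r$, with its projection to $\Sym^r(\Gamma)$ generically finite, so that for a generic $D_0'\le E_0$ the divisor $E_0$ is isolated in its $\pi_2$-fiber. Second, for each $D'$ in a sufficiently small neighborhood of $D_0'$, Baker--Norine yields some $E\in|\Sigma|$ with $E\ge D'$; by a semicontinuity argument on the closed polyhedral correspondence $W_r$ near the isolated point $(E_0,D_0')$, every such $E$ must be close to $E_0$ and hence lies in $U$. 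This places $(E,D')\in W$ and $D'\in\pi_2(W)$, so a whole neighborhood of $D_0'$ is contained in $\pi_2(W)$, contradicting the dimension bound above.

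The main obstacle is the second step: making the continuity of Baker--Norine dominators near $D_0'$ rigorous. This is the technical heart of Dupraz's original argument, and is where a streamlined proof must do real work --- presumably by replacing the abstract semicontinuity appeal with an explicit polyhedral construction of the required nearby $E$, for instance by perturbing $D_0'$ in a direction transverse to $F$ inside $E_0$ and using the structure of reduced divisors to track the induced perturbation of $E_0$ inside $|\Sigma|$.
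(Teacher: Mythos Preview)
Your proposal is incomplete by your own admission, but the gap is more serious than you indicate, and the paper's actual argument is quite different from what you are sketching.

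\medskip

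\textbf{The gap in your approach.} The set of effective $D_0'$ with $D_0'\le E_0$ is \emph{finite}: if $E_0=\sum a_i p_i$ then each such $D_0'$ is $\sum r_ip_i$ with $0\le r_i\le a_i$ and $\sum r_i=r$. So the phrase ``choose $D_0'$ sufficiently generic inside $E_0$'' has no content, and there is no reason any of these finitely many $D_0'$ should lie in the generic locus of $\pi_2$. Even setting that aside, generic finiteness of $\pi_2\colon W_r\to\Sym^r(\Gamma)$ only tells you the fiber over a generic $D_0'$ is \emph{finite}, not a singleton; so $E_0$ being \emph{isolated} in its fiber is the most you could hope for. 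With merely an isolated point, the semicontinuity you invoke fails: properness of $\pi_2$ gives you that points of $W_r$ over $D'$ near $D_0'$ accumulate on the whole fiber $\pi_2^{-1}(D_0')$, not on $E_0$ alone, so the Baker--Norine witness $E$ for a nearby $D'$ may well land near a different fiber point, far from $U$. Your suggested fix via reduced divisors does not obviously address this.

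\medskip

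\textbf{What the paper does instead.} The paper avoids incidence correspondences and semicontinuity entirely. The argument is a short induction on $r=r_\BN(\Sigma)$, carried out for the larger class of \emph{polyhedral} submodules (not necessarily finitely generated). Given a maximal face $\tau\subseteq|\Sigma|$ and $D$ in its relative interior, pick any other $D'\in\tau$; since divisors in $\relint(\tau)$ share a combinatorial type, there is a point $x\in\supp(D)\smallsetminus\supp(D')$. The submodule $\Sigma(-x):=\{\varphi\in\Sigma:D-x+\ddiv(\varphi)\ge 0\}$ has $r_\BN\ge r-1$ directly from the definition, and the key lemma is that $|\Sigma(-x)|\cap\relint(\tau)$ is a finite disjoint union of affine subspaces of $\relint(\tau)$ (this is immediate once you observe that fixing a point in the support is an affine condition in the natural coordinates on $\relint(\tau)$). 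Hence $D$ lies in the relative interior of a maximal face $\tau'\subseteq|\Sigma(-x)|$ whose affine span misses $D'$, forcing $\dim\tau\ge\dim\tau'+1$. Induction gives $\dim\tau'\ge r-1$, and you are done. The passage to polyhedral submodules is what makes the induction run, since $\Sigma(-x)$ is polyhedral when $\Sigma$ is, though not obviously finitely generated.
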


\noindent We give a streamlined proof following the original ideas from the Master's thesis of Dupraz \cite{DuprazMasters}. An alternate approach was developed later in \cite{AGG}. Here, we also state and prove a much stronger theorem relating the local structure of $|\Sigma|$ to Bergman fans of matroids (\Cref{thm:local-structure}). 

Since $|\Sigma|$ has pure  dimension $r = r_{\BN}(\Sigma)$, and we follow the convention that the dimension of a linear series is its projective dimension, we say that $\Sigma$ is a \emph{tropical linear series of dimension $r$}. Thus, the tropicalization of a linear series of dimension $r$ is a tropical linear series of dimension $r$.

\begin{remark} \label{rem:BN-bound}
Any metric graph $\Gamma$ of first Betti number $g$ is the skeleton of a smooth projective curve of genus $g$. See, e.g., \cite[Appendix~B]{Baker08}. Therefore, the Brill--Noether theorem tells us that there is a divisor $D$ of degree $d$ and a tropical linear series $\Sigma \subseteq R(D)$ of dimension $r$ whenever $(r+1)(g-d+r) \geq g$. This bound is sharp, by \cite{tropicalBN}.
\end{remark}

\subsection{Matroidal linear series}

This work focuses on the relations between tropical linear series and matroids. If $K$ is a valued field and $V \subseteq \AA^n_K$ is a linear subvariety, then $\Trop(V) \subseteq \Rbar^n$ is a tropical submodule.  Abstracting the combinatorial properties of such submodules gives the cryptomorphic axiomatization of valuated matroids in terms of valuated covectors. Since we use this axiomatization of valuated matroids throughout, we simply refer to a submodule of $\Rbar^n$ satisfying the valuated covector axioms as a valuated matroid.  See \Cref{sec:matroids} for details and references.

\begin{theorem} \label{thm:realizablematroidal}
If $\Sigma$ is the tropicalization of a linear series of dimension $r$ then there is a realizable valuated matroid $\cV$ of rank $r + 1$ and a surjective homomorphism of tropical modules $\cV \twoheadrightarrow \Sigma$.
\end{theorem}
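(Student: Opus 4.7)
The plan is to realize $\cV$ as the tropicalization of $V$ under an embedding into $\bar{K}^M$ by evaluation at generic lifts of well-chosen probe points on $\Gamma$. Since $\dim_K V = r+1$, any injective linear embedding of $V$ into a finite-dimensional $\bar K$-vector space produces a realizable valuated matroid of rank $r+1$; the real work is in arranging the embedding so that the induced set-map to $\Sigma$ descends to a well-defined surjective tropical module homomorphism.

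First I would fix a finite set of probe points $p_1,\ldots,p_M \in \Gamma$ that separates $\Sigma$, in the sense that any two distinct elements of $\Sigma$ differ at some $p_j$. Since $\Sigma \subseteq R(D)$ is a finitely generated tropical module and $|\Sigma|$ has pure dimension $r$ in $\Sym^d(\Gamma)$ by \Cref{thm:puredim}, the elements of $\Sigma$ are parameterized as $\min$-combinations of generators by finitely many real coefficients with only finitely many combinatorial types; for $M$ large enough and the points in generic position, evaluation at $\{p_j\}$ separates them. For each $p_j$, I would next choose a generic type-1 lift $\tilde p_j \in X(\bar K)$ retracting to $p_j$ under $\tau : X^{\mathrm{an}} \to \Gamma$, arranged so that $\val(f(\tilde p_j)) = \trop(f)(p_j)$ for every $f$ in a fixed basis of $V$, hence by linearity for every $f \in V$, after trivializing $\OO(D_X)$ locally. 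Such lifts exist because the locus where this equality fails is a proper subvariety of the relevant reduction, easily avoided over the algebraically closed field $\bar K$.

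With these choices in place, define the evaluation embedding
\[
\mathrm{ev}: V \hookrightarrow \bar K^M, \qquad f \mapsto \bigl(f(\tilde p_1), \ldots, f(\tilde p_M)\bigr).
\]
For $M$ sufficiently large with the probes in general position this map is injective, and its image $W$ is an $(r+1)$-dimensional linear subspace of $\bar K^M$. Setting $\cV := \Trop(W) \subseteq \bar\RR^M$ produces, by construction, a realizable valuated matroid of rank $r+1$. I would then define $\pi : \cV \twoheadrightarrow \Sigma$ by $\pi(\Trop(\mathrm{ev}(f))) := \trop(f)$. Well-definedness: if $\Trop(\mathrm{ev}(f)) = \Trop(\mathrm{ev}(f'))$, then the genericity of the lifts gives $\trop(f)(p_j) = \trop(f')(p_j)$ for all $j$, and the separating property forces $\trop(f) = \trop(f')$. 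Surjectivity is immediate since $\Sigma = \{\trop(f) : f \in V\}$. Scalar addition on $\cV$ corresponds to multiplication of $f$ by a scalar of prescribed valuation in $K^*$, so $\pi$ respects it; and for tropical sums, $v \oplus v' \in \cV$ arises as $\Trop(\mathrm{ev}(f) + c \cdot \mathrm{ev}(f'))$ for generic $c \in K^*$ with $\val(c) = 0$, while simultaneously $\trop(f + cf') = \trop(f) \oplus \trop(f')$ since generic $c$ avoids cancellation in both tropicalizations.

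The main obstacle is the rigorous construction of the finite separating probe set in the first step. The finite generation of $\Sigma$ together with the pure-dimensionality from \Cref{thm:puredim} should yield a finite-dimensional parameterization of $\Sigma$, after which a genericity argument on the placement of the $p_j$'s delivers the separation; making this precise requires some care with the piecewise-linear structure of $R(D)$ and the combinatorial types of $\min$-combinations of generators. Once the separating set is in hand, the remaining ingredients -- existence of suitably generic lifts, injectivity of $\mathrm{ev}$, and the homomorphism properties of $\pi$ -- reduce to standard exercises in tropicalization.
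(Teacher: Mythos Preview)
Your approach has a genuine gap in the second paragraph. You claim that a generic type-I lift $\tilde p_j$ can be chosen so that $\val(f(\tilde p_j)) = \trop(f)(p_j)$ for every $f$ in a fixed basis of $V$, ``hence by linearity for every $f \in V$.'' The linearity step is false: valuation is not additive. In fact no type-I point can satisfy this equality for all of $V$ at once. As soon as $\dim V \geq 2$, for any $\tilde p_j \in X(K)$ there is a nonzero $f \in V$ vanishing at $\tilde p_j$, giving $\val(f(\tilde p_j)) = \infty$ while $\trop(f)(p_j)$ is finite. Without this equality your map $\pi$ is not well-defined, since the tuple $(\val(f(\tilde p_j)))_j$ does not recover $(\trop(f)(p_j))_j$, and the separating property of the probe points is then of no use.

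The paper's proof avoids point evaluation entirely. After subdividing so that $\supp(D_X)$ tropicalizes into the vertex set, each open edge $e$ pulls back to an annulus on which $f$ has a Laurent expansion $f = \sum_n \alpha_{e,n}\, t^n$, and $\trop(f)|_e(x) = \min_{|n| \leq d}\{nx + \val(\alpha_{e,n})\}$. The assignment $f \mapsto (\alpha_{e,n})_{e,n}$ is a genuine $K$-linear embedding $V \hookrightarrow K^{(2d+1)s}$, so its tropicalization $\cV$ is realizable of rank $r+1$; and this formula exhibits $\cV \to \Sigma$ as the restriction of a tropical linear map, hence a tropical module homomorphism by construction. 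The essential contrast: Laurent coefficients are linear in $f$ and their valuations recover $\trop(f)$ exactly, whereas point values are also linear in $f$ but their valuations only bound $\trop(f)$ from above.
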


\noindent \Cref{thm:realizablematroidal} first appeared implicitly in the proof that the tropicalization of any linear series is finitely generated as a tropical module \cite[Proposition~6.4]{M23}. For the reader's convenience, we give a self-contained proof in \Cref{sec:realizablematroidal}.

A subset $\Sigma \subseteq\Rbar^n$ is a valuated matroid if and only if it contains $(\infty, \ldots, \infty)$ and the projectivization $\big(\Sigma \smallsetminus (\infty, \ldots, \infty)\big)/\RR$ is a tropical linear space. In particular, tropical linear spaces are another cryptomorphic incarnation of valuated matroids \cite{SpeyerLinear,BrandtEurZhang21}.  Here, we focus on valuated matroids, because we rely heavily on the tropical module structure, which is lost through projectivization.

\begin{definition} \label{def:MLS}
We say that a tropical linear series $(D, \Sigma)$ of dimension $r$ is a \emph{matroidal linear series} if $\Sigma$ is the image of a tropical module homomorphism from a valuated matroid of rank $r+1$.
\end{definition}

\begin{definition} \label{def:realizable}
 A tropical linear series is \emph{realizable} if it is the tropicalization of a linear series.
 \end{definition}

\noindent If $\Sigma$ is the image of a valuated matroid of rank $r + 1$, then $r_{\ind}(\Sigma) \leq r + 1$. Thus, a tropical submodule $\Sigma \subseteq R(D)$ is a matroidal linear series of dimension $r$ if and only it is the image of a valuated matroid of rank $r+1$ under a homomorphism of tropical modules and $r_{\BN}(\Sigma) = r$. By \Cref{thm:realizablematroidal}, every realizable tropical linear series is matroidal.

\begin{remark}
The surjection $\cV \twoheadrightarrow \Sigma$ is not unique and is not part of the data of a matroidal linear series. We refer to such a surjection as a \emph{parametrization}. \Cref{thm:realizablematroidal} says that the tropicalization of any linear series has a  parametrization by a realizable valuated matroid.  In Sections~\ref{sec:local-matroidal} and \ref{sec:Cartwright}, %
we give examples of matroidal linear series that are not parametrized by any realizable valuated matroid and hence are not tropicalizations of linear series on algebraic curves.
\end{remark}

\begin{remark}
It is immediate from the definitions that the restriction of a tropical or matroidal linear series to a subgraph is a tropical or matroidal linear series, respectively. %
We can therefore study tropical linear series on a complicated graph by considering their restrictions to smaller and simpler subgraphs, such as edges and loops. Such arguments are essential for applications, as in \cite{M13, M23}, which involve studying  tropicalizations of linear series on a chain of loops with bridges by systematically considering the restrictions to each loop and bridge.
\end{remark}

\begin{question} \label{q:tropical-implies-matroidal}
Is every tropical linear series a matroidal linear series?
\end{question}

\noindent We do not know the answer to this basic question. However, we show that the structure of every tropical linear series is closely related to a matroid locally near every point in an open dense subset of nondegenerate divisors.

\subsection{Nondegenerate divisors and local matroids}

Let $\Sigma \subseteq R(D)$ be a tropical linear series of degree $d$.  
We use two semi-continuous integer-valued functions on $|\Sigma|$ to characterize nondegenerate divisors. Let $\supp(D)$ be the finite set of points in $\Gamma$ that appear with nonzero multiplicity in $D$. The first function we consider is $\#\supp$, which is lower semi-continuous on $\Sym^d(\Gamma)$. The second is the \emph{valence 1 degree}, which is upper semi-continuous and takes a divisor to the sum of its multiplicities at the valence 1 points of $\Gamma$.

\begin{definition} \label{def:nondegenerate}
Let $\Sigma$ be a tropical linear series. A divisor $D \in |\Sigma|$ is \emph{nondegenerate} if $\# \supp$ has a local maximum in $|\Sigma|$ at $D$ and the valence 1 degree has a local minimum in $|\Sigma|$ at $D$.
\end{definition} %

\noindent A nondegenerate divisor $D$ has an open neighborhood $U$ in $|\Sigma|$ such that the restriction of $\#\supp$  to $U$ achieves its maximum at $D$, and the restriction of the valence $1$ degree to $U$ achieves its minimum at $D$. The locus of nondegenerate divisors is open and dense in $|\Sigma|$.

We study the local structure of $|\Sigma|$ near a nondegenerate divisor $D$, as follows. Roughly speaking, the star of a point $x$ in a polyhedral space $\Delta$, denoted $\Star(x)$, is the cone-shaped space that one sees looking out into $\Delta$ from $x$.  More precisely, $\Star(x)$ is the space of germs of linear maps $[0,\epsilon] \to \Delta$ that take $0$ to $x$. A choice of triangulation of $\Delta$ induces a simplicial fan structure on $\Star(x)$, but the support of $\Star(x)$ is independent of such choices. Our next theorem identifies $\Star(D)$ with the Bergman fan of a matroid, when $D \in |\Sigma|$ is nondegenerate.

For $\varphi \in \Sigma$, we define $\varphi_{\min} \subseteq \Gamma$ to be the \emph{minimizer} of $\varphi$, i.e.,
\[
\varphi_{\min} := \{ p \in \Gamma : \varphi(p) = \min_{x \in \Gamma} \varphi(x) \}.
\]
If $\Sigma \subseteq R(D)$ and $D \in |\Sigma|$ is nondegenerate, then $\varphi_{\min}$ is a union of closures of connected components of $\Gamma \smallsetminus \supp(D)$ (\Cref{Prop:NoIsolatedPoints}). 
Let $E$ be the set of connected components of $\Gamma \smallsetminus \supp(D)$, and 
\[
F_\varphi := \{ S \in E : S \not \subseteq \varphi_{\min} \}.
\]
If $D$ is nondegenerate, evaluation at one chosen point in each connected component of $\Gamma \smallsetminus \supp(D)$ gives local coordinates in a neighborhood of $D$ and induces an embedding $\Star(D) \subseteq \RR^{E}/(1, ..., 1)$.

\begin{theorem} \label{thm:local-structure}
Let $\Sigma \subseteq R(D)$ be a tropical linear series of dimension $r$ such that $D$ is nondegenerate in $|\Sigma|$. Then
$
\{ F_\varphi : \varphi \in \Sigma \} \cup \{E\}
$
is the set of flats of a matroid $M_{\Sigma}$ of rank $r + 1$ on $E$, and the image of the embedding $\Star(D) \subseteq \RR^{E}/(1, ..., 1)$ is the support of the Bergman fan of $M_{\Sigma}$.
\end{theorem}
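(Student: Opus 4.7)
After shifting $\Sigma$ by $-\varphi_D$ for a representative with $D = D_0 + \ddiv(\varphi_D)$, I may assume the zero function lies in $\Sigma$ and represents $D$. For each $\varphi \in \Sigma$ I encounter, normalize so that $\min_\Gamma \varphi = 0$; by \Cref{Prop:NoIsolatedPoints}, $\varphi_{\min}$ is a union of closures of components of $\Gamma \setminus \supp(D)$, so the evaluation vector $\bar v(\varphi) := (\varphi(x_S))_{S \in E} \in \RR^E_{\geq 0}$ has zero coordinates precisely on $E \setminus F_\varphi$. The plan is to verify that $\mathcal{F} := \{F_\varphi : \varphi \in \Sigma\} \cup \{E\}$ satisfies the matroid flat axioms and defines a matroid $M_\Sigma$ of rank $r + 1$, and simultaneously to identify the image of $\Star(D)$ in $\RR^E/(1, \ldots, 1)$ with the support of the Bergman fan $B(M_\Sigma)$.

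\textbf{Easy axioms and one Bergman inclusion.} Axiom (F1) is immediate, and axiom (F2), closure under intersection, follows from the identity $F_{\varphi \oplus \psi} = F_\varphi \cap F_\psi$ whenever $\min \varphi = \min \psi = 0$; this in turn comes from the direct set equality $(\varphi \oplus \psi)_{\min} = \varphi_{\min} \cup \psi_{\min}$ on $\Gamma$, combined with \Cref{Prop:NoIsolatedPoints} which forces both sides to be unions of closures of components. Given a chain of flats $F_1 \subsetneq \ldots \subsetneq F_k$ realized by elements $\psi_1, \ldots, \psi_k \in \Sigma$ and positive reals $c_1, \ldots, c_k$, the tropical combination $\varphi := \min_i(c_i + \psi_i)$ is in $\Sigma$ and, appropriately normalized, yields $\bar v(\varphi)$ as a positive combination of the indicator vectors $\mathbf{1}_{F_i}$, placing it in the Bergman cone for this chain. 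This shows that the support of $B(M_\Sigma)$ is contained in the image of $\Star(D)$.

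\textbf{Main obstacle.} The crux of the argument is showing that $\mathcal{F}$ has enough elements, in two interlocking senses: the covering axiom (F3), and the fact that every level set $\{S : \varphi(x_S) \geq c\}$ for $\varphi \in \Sigma$ lies in $\mathcal{F}$ (which is equivalent to the reverse containment, that every $\bar v(\varphi)$ lies in $|B(M_\Sigma)|$). Neither follows from tropical module axioms alone: tropical truncation $\varphi \oplus c$ preserves $\varphi_{\min}$ and hence cannot by itself raise the floor, so the proof must genuinely produce other elements of $\Sigma$ whose minimizers realize the deeper level sets. This is where the defining condition $r_{\ind}(\Sigma) = r_{\BN}(\Sigma) + 1$ of a tropical linear series is essential: combined with the pure-dimensionality of $|\Sigma|$ from \Cref{thm:puredim}, it forces the tangent cone $\Star(D)$ to have sufficiently many independent rays and faces to realize each required flat via a local dimension count. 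Once (F3) and level-set realizability are in hand, the matroid rank is forced to equal $r + 1$ by matching maximal chains in $\mathcal{F}$ with the $r$-dimensional maximal cones of $\Star(D)$, and the two inclusions combine to give the Bergman fan identification.
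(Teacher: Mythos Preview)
Your proposal correctly handles the setup and the easy axioms, and your observation that $F_{\varphi\oplus\psi}=F_\varphi\cap F_\psi$ gives closure under intersection is right. But the section you label ``Main obstacle'' is where the proof actually is, and you do not give one: you invoke a ``local dimension count'' and pure-dimensionality from \Cref{thm:puredim} to produce (F3) and the reverse Bergman inclusion, without saying what that count is or why it forces level sets of arbitrary $\varphi\in\Sigma$ to be realizable as minimizers of other elements. A dimension count alone cannot do this; nothing prevents, a priori, $\Star(D)$ from being an $r$-dimensional fan that is not the Bergman fan of any matroid.

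The paper's argument is concrete and different from your sketch. For the matroid structure it avoids (F3) directly: it shows (i) every size-$r$ subset of $E$ lies in some $F_\varphi$ using $r_{\BN}(\Sigma)=r$, and (ii) the longest chain in $\{F_\varphi\}$ has length at most $r$ using $r_{\ind}(\Sigma)=r+1$ (via a tropical-rank matrix argument), then applies a general lemma that any intersection-closed family of proper subsets with these two properties is automatically a rank-$(r+1)$ matroid lattice of flats. For the reverse Bergman inclusion, the key step you are missing is a slope analysis that uses nondegeneracy directly: for $\varphi$ close to $0$ and $x\in\supp(D)$, local maximality of $\#\supp$ forces every outgoing slope at $x$ to be $0$ or $D(x)$, with at most one tangent direction nonzero, and local minimality of the valence-$1$ degree forces at least one direction to have slope $0$. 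These constraints let you reconstruct $\varphi$ from the evaluation vector $(\varphi(p_i))_i$, proving the evaluation map is a local homeomorphism onto a neighborhood of $0$ in $\mathcal B(M_\Sigma)$. Your proposal never engages with this slope-constraint mechanism, which is precisely what makes the embedding an embedding and identifies its image.
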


\noindent We say that $M_{\Sigma}$ is the \emph{local matroid} of $\Sigma$. It is an invariant of the submodule $\Sigma \subseteq\PL(\Gamma)$ or, equivalently, an invariant of the pair $(|\Sigma|, D)$.  \Cref{thm:local-structure} identifies the local geometry of $|\Sigma|$ at a nondegenerate divisor $D$ with the Bergman fan of $M_\Sigma$. \Cref{thm:puredim} is an immediate corollary, because the nondegenerate divisors are dense in $|\Sigma|$. Note that we do not choose a preferred fan structure on the support of the Bergman fan. Throughout, whenever we refer to the Bergman fan of a matroid, we are primarily interested in its support.

When one varies the nondegenerate divisor in $|\Sigma|$, the local matroid varies as well. If $\Sigma \subseteq R(D)$ and $D' = D + \ddiv(\varphi') \in |\Sigma|$ then $\Sigma' := \{ \varphi' : \varphi' + \varphi \in \Sigma \}$ is a submodule of $R(D')$ and the map $\varphi' \mapsto \varphi + \varphi'$ induces the identity $|\Sigma'| = |\Sigma|$ as subsets of $\Sym^d(\Gamma)$. The local matroids $M_{\Sigma'}$ and $M_\Sigma$ have the same rank, and they have a common restriction to the set of components of $\Gamma \smallsetminus \supp(D)$ on which $\varphi'$ is minimized. %

The local matroids associated to $(|\Sigma|, D)$ for fixed $\Sigma$ and varying $D$ are closely analogous to the initial matroids $\cV^w$ of a fixed valuated matroid $\cV \subset \Rbar^n$ with respect to a weight vector $w \in \RR^n / (1,...,1)$.  Indeed, if $w \in |\cV|$ then $\Star(w)$ is the Bergman fan of $\cV^w$.  Also, we define the local matroid not only when $D \in |\Sigma|$ is nondegenerate but, more generally, whenever $\Sigma$ has \emph{big minimizers}, meaning that every minimizer $\varphi_{\min}$, for $\varphi \in \Sigma$, contains a connected component of $\Gamma \smallsetminus \supp(D)$. \Cref{thm:local_matroid} shows that the local matroid associated to $(|\Sigma|, D)$ is loopless if and only if $D \in |\Sigma|$, just as the initial matroid $\cV^w$ is loopless if and only if $w \in |\cV|$.

\subsection{Tropicalizations of canonical linear series}

As an application, we use our theory of local matroids to study tropicalizations of canonical linear series. For simplicity,  suppose $\Gamma$ is a trivalent graph with first Betti number $g$ and no bridges. Let $K_\Gamma$ be the sum of its vertices. Specialization from the canonical linear series of a curve with skeleton $\Gamma$ shows that $R(K_\Gamma)$ contains a tropical linear series of dimension $g-1$. If $\Sigma$ is such a tropical linear series then $|\Sigma|$ contains $K_\Gamma$ as a nondegenerate divisor, and our \Cref{thm:local_matroid} implies that the local matroid at $K_\Gamma$ is the cographic matroid. See   Example~\ref{ex:canonical}. This casts new light on an observation of Haase, Musiker, and Yu \cite[Theorem~25]{HMY12}.

M\"oller, Ulirsch, and Werner used the space of multiscale differentials \cite{BCGGM17} to determine the realizable locus $\Real(|K_\Gamma|) \subset |K_\Gamma|$, the space of divisors that can be realized as the tropicalization of an effective canonical divisor on a curve over a valued field with skeleton $\Gamma$ in equicharacteristic zero \cite{MUW21}. However, they left open the problem of understanding the possibilities for the tropicalization of the canonical linear series on a single curve. This is nontrivial; we give an example where $\Real(|K_\Gamma|)$ has dimension greater than $g-1$, and hence cannot be the tropicalization of any single canonical linear series. We use our theory of local matroids to show that the dimension is the only obstruction to realizing every divisor in $\Real(|K_\Gamma|)$ on every curve with skeleton $\Gamma$.

\begin{theorem} \label{thm:realizable}
Let $X$ be a curve over a nonarchimedean field of equicharacteristic zero with skeleton $\Gamma$. Then $\Trop(|K_X|) = \mathrm{Real}(|K_\Gamma|)$ if and only if $\mathrm{Real}(|K_\Gamma|)$ has dimension $g-1$.
\end{theorem}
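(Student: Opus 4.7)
The forward implication is immediate. By specialization, $\Sigma := \Trop(|K_X|) \subseteq R(K_\Gamma)$ is a tropical linear series of dimension $g-1$, so by \Cref{thm:puredim} the polyhedral set $|\Sigma|$ has pure dimension $g-1$. The equality $|\Sigma| = \Real(|K_\Gamma|)$ therefore forces $\dim \Real(|K_\Gamma|) = g-1$.

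For the converse, assume $\dim \Real(|K_\Gamma|) = g-1$. The inclusion $|\Sigma| \subseteq \Real(|K_\Gamma|)$ is built into the definition of the realizable locus, so the task is to prove $\Real(|K_\Gamma|) \subseteq |\Sigma|$. Since $\Gamma$ is trivalent and bridgeless, $K_\Gamma$ is nondegenerate in $|\Sigma|$: its support has size $|V(\Gamma)| = 2g-2 = \deg(K_\Gamma)$, which is the maximum possible for an effective divisor of this degree, and its valence-$1$ degree vanishes. By \Cref{ex:canonical}, the local matroid of $\Sigma$ at $K_\Gamma$ is the cographic matroid $M^*(\Gamma)$ of rank $g$, so \Cref{thm:local-structure} identifies $\Star_{|\Sigma|}(K_\Gamma)$ with the support of the Bergman fan of $M^*(\Gamma)$, a pure polyhedral fan of dimension $g-1$.

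The key step is then to combine the containment $|\Sigma| \subseteq \Real(|K_\Gamma|)$ with the MUW description of $\Real(|K_\Gamma|)$ coming from the space of multiscale differentials to conclude $\Star_{\Real(|K_\Gamma|)}(K_\Gamma) = \Star_{|\Sigma|}(K_\Gamma)$. The inclusion $\Star_{|\Sigma|}(K_\Gamma) \subseteq \Star_{\Real(|K_\Gamma|)}(K_\Gamma)$ is automatic, so we must rule out additional top-dimensional cones. The dimension hypothesis prevents any cone of $\Star_{\Real(|K_\Gamma|)}(K_\Gamma)$ from exceeding dimension $g-1$, and the Bergman fan of $M^*(\Gamma)$ already saturates this dimension as a pure fan. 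Direct inspection of MUW's level-graph description identifies every maximal cone of $\Star_{\Real(|K_\Gamma|)}(K_\Gamma)$ with a maximal flag of cuts of $\Gamma$, which is precisely the cone data of a maximal cone of the Bergman fan of $M^*(\Gamma)$.

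Finally we propagate this local identification across all of $|K_\Gamma|$ by translation. For any nondegenerate $D \in \Real(|K_\Gamma|)$, write $D = K_\Gamma + \ddiv(\varphi)$ and set $\Sigma' := \{\psi \in \PL(\Gamma) : \psi + \varphi \in \Sigma\} \subseteq R(D)$; then $|\Sigma'| = |\Sigma|$, and the local matroid of $\Sigma'$ at $D$ is the cographic matroid of the graph obtained from $\Gamma$ by contracting the components of $\Gamma \smallsetminus \supp(D)$ on which $\varphi$ is minimized. The same argument identifies $\Star_{\Real(|K_\Gamma|)}(D)$ with this Bergman fan, which equals $\Star_{|\Sigma|}(D)$. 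Since nondegenerate divisors are dense and both $|\Sigma|$ and $\Real(|K_\Gamma|)$ are closed polyhedral subsets of $\Sym^{2g-2}(\Gamma)$, this local agreement at a dense subset forces the desired global equality. The main obstacle in the plan is the MUW comparison at the heart of the third paragraph: one must verify that every maximal cone of MUW's stratification of $\Real(|K_\Gamma|)$ near $K_\Gamma$ (and near its contractions) really does correspond to a flag of cuts, so that the dimension hypothesis genuinely excludes every extraneous maximal cell.
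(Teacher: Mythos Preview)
Your approach differs substantially from the paper's and contains a gap you yourself flag. The paper's proof of the converse is much shorter and avoids any direct comparison with MUW's level-graph combinatorics. The key ingredient you are missing is \Cref{prop:realizable-module}: the preimage of $\Real(|K_\Gamma|)$ in $R(K_\Gamma)$ is a \emph{polyhedral tropical submodule}, not merely a polyhedral subset. Closure under pointwise minimum is checked directly from MUW's criterion by showing that conditions \ref{it:inconvenient} and \ref{it:horizontal} are preserved under $\min$. Once this is in hand, the dimension hypothesis gives $r_{\ind}(\Real(|K_\Gamma|)) = g$ via \Cref{cor:tropical-rank-properties-3}, and since $\Trop(|K_X|) \subseteq \Real(|K_\Gamma|)$ is a tropical linear series of dimension $g-1$, \Cref{cor:strong-maximality-tls} forces equality immediately. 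No local matroids, no Bergman fans, no star comparisons.

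Your plan instead attempts a local-to-global argument by matching stars, and the step you label the ``main obstacle''---showing that every maximal cone of $\Star_{\Real(|K_\Gamma|)}(K_\Gamma)$ arises from a flag of cuts---is a genuine gap: it is not a formality, and carrying it out would amount to reproving a sizable chunk of MUW's structure theory. Your propagation step is also shaky. The assertion that the local matroid of $\Sigma'$ at a general nondegenerate $D \in \Real(|K_\Gamma|)$ is the cographic matroid of a specific contraction of $\Gamma$ is unsupported; \Cref{ex:canonical} only treats the case $D = K_\Gamma$, and for arbitrary nondegenerate $D$ the components of $\Gamma \smallsetminus \supp(D)$ need not be edges of any model. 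You also assume $\Gamma$ is trivalent and bridgeless, which is not in the hypotheses of the theorem (the remark in \S1.5 is for the surrounding discussion only), so even the claim that $K_\Gamma$ is nondegenerate in $|\Sigma|$ is unjustified in general; cf.\ \Cref{ex:K-degen} and \Cref{ex:canonical-divisor-complete-linear-system}. The paper's argument needs none of this.
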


\subsection{Local matroids and realizability}

For matroidal linear series, we prove a precise technical result relating the local matroids at nondegenerate divisors to the initial matroids of the parametrizing matroid \Cref{thm:local-is-initial}.  This makes the local matroids at nondegenerate divisors a key obstruction to realizability.

\begin{theorem} \label{thm:local-realizable}
If $\Sigma \subseteq R(D)$ is the tropicalization of a linear series and $D \in |\Sigma|$ is nondegenerate then the local matroid $M_{\Sigma}$ is realizable.
\end{theorem}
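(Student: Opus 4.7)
The plan is to chain together three facts: the parametrization by a realizable valuated matroid guaranteed by \Cref{thm:realizablematroidal}, the identification of the local matroid with an initial matroid asserted by Theorem~\ref{thm:local-is-initial}, and the classical fact that initial matroids of realizable valuated matroids are themselves realizable.

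First, I would invoke \Cref{thm:realizablematroidal}: since $\Sigma$ is the tropicalization of a linear series of dimension $r$, it provides a realizable valuated matroid $\cV$ of rank $r+1$ together with a surjective tropical module homomorphism $\pi \colon \cV \twoheadrightarrow \Sigma$. Realizability means $\cV = \Trop(V)$ for some linear subspace $V \subseteq \AA^n_K$ over a valued field $K$ with residue field $k$. In particular, $(D,\Sigma)$ is a matroidal linear series parametrized by $\cV$, putting us in position to apply the theory of local matroids developed for matroidal linear series.

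Next, because $D \in |\Sigma|$ is nondegenerate, Theorem~\ref{thm:local-is-initial} identifies $M_\Sigma$ with an initial matroid $\cV^w$, where $w \in \RR^n/(1,\ldots,1)$ is determined by the values at $D$ of the generators coming through $\pi$. This reduces the statement to the classical observation that the initial matroid of a realizable valuated matroid $\cV = \Trop(V)$ is realized over $k$ by the Gr\"obner degeneration $\mathrm{in}_w V$, possibly after passing to a sufficiently ramified extension of $K$ so that $w$ lies in the value group.

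The step I expect to be the main obstacle is the bookkeeping around Theorem~\ref{thm:local-is-initial}: one must verify that the weight vector extracted from the nondegenerate divisor $D$ together with the \emph{realizable} parametrization $\pi$ genuinely produces $M_\Sigma$ as an initial matroid of $\cV$, rather than merely of some other, possibly non-realizable, parametrizing matroid. Once the correct $w$ is in hand, realizability of $M_\Sigma$ passes through the Gr\"obner degeneration for free.
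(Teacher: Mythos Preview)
Your overall strategy matches the paper's, but you misstate what \Cref{thm:local-is-initial} actually gives you. It does \emph{not} identify $M_\Sigma$ with an initial matroid $\cV^w$; it only asserts that the \emph{simplification} $\overline{M_\Sigma}$ is isomorphic to a \emph{submatroid} of the initial matroid $\cV^{\sigma(0)}$, where $\sigma$ is the section of $\pi$ constructed there. So after observing that initial matroids of a realizable valuated matroid are realizable, you still need two further (easy) facts to close the argument: any submatroid of a realizable matroid is realizable (just delete the extra vectors from a realization), and a matroid is realizable if and only if its simplification is (reinsert loops as zero vectors and parallel elements as scalar multiples). With those two sentences added, your proof is exactly the paper's.

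Your stated worry about the ``bookkeeping'' in \Cref{thm:local-is-initial} is misplaced: that theorem applies to \emph{any} parametrization $\pi\colon\cV\twoheadrightarrow\Sigma$ of a matroidal linear series, so once \Cref{thm:realizablematroidal} hands you a realizable $\cV$, the section $\sigma$ and the weight $w=\sigma(0)$ are produced automatically for this particular $\pi$. There is no risk of landing in a different, non-realizable parametrizing matroid.
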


\begin{remark} Since $|\Sigma|$ is a polyhedral complex of pure dimension  $r$, there is an open dense set of points where $|\Sigma|$ is locally isomorphic to $\RR^r$ and hence locally looks like the Bergman fan of the Boolean matroid of rank $r+1$. \Cref{thm:local-structure} gives much stronger and more precise information. 
The local matroids that are not Boolean are essential for our applications.
\end{remark}

\begin{theorem} \label{thm:every-matroid}
Every loopless matroid is isomorphic to the local matroid of a tropical linear series at a nondegenerate divisor.
\end{theorem}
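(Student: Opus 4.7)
The strategy is to construct a matroidal linear series parametrized by the trivially-valuated matroid $\cV \subseteq \Rbar^E$ with underlying matroid $M$; since $\cV$ has rank $r+1$ and initial matroid $\cV^0 = M$ at the origin, \Cref{thm:local-is-initial} will then identify the local matroid at a suitable nondegenerate divisor with $M$.

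I would take $\Gamma$ to be the bouquet of $n$ loops $\ell_1, \ldots, \ell_n$ meeting at a central vertex $v$, with each loop of length $1$ and midpoint $p_i$. For each flat $F \neq E$ of $M$, let $\psi_F \in \PL(\Gamma)$ be zero on loops $\ell_i$ for $i \notin F$ and the standard tent function (slopes $\pm 1$, peak value $1/2$ at $p_i$) on loops $\ell_i$ for $i \in F$. A direct computation gives $\ddiv(\psi_F) = -2|F|\,v + 2\sum_{i \in F} p_i$, so $\psi_F \in R(D)$ for $D = Kv$ with $K \geq 2(n-1)$. A crucial observation is that $\min(\psi_{F_1}, \psi_{F_2}) = \psi_{F_1 \cap F_2}$, so the tropical submodule generated by the hyperplane functions $\{\psi_H : H \text{ a hyperplane of } M\}$ automatically contains $\psi_F$ for every flat $F$ of $M$, since every flat is the intersection of the hyperplanes containing it. Let $\Sigma$ be this tropical submodule.

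Parametrize $\Sigma$ by $\cV$ by sending each valuated cocircuit $\mathbf{c}_{C^*}$ (zero on $C^*$, $\infty$ on $H = E \setminus C^*$) to $\psi_H$ and extending tropical-linearly; this exhibits $\Sigma$ as matroidal of rank at most $r+1$ and so $r_{\ind}(\Sigma) \leq r+1$. The main combinatorial step is a loop-by-loop computation showing that for $\varphi = \min_H(c_H + \psi_H) \in \Sigma$, the minimizer $\varphi_{\min}$ is the closure of the union of loops indexed by $E \setminus \bigcap\{H : c_H \text{ minimal}\}$, so $F_\varphi = \bigcap\{H : c_H \text{ minimal}\}$, which is always a flat of $M$. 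Conversely, every flat $F$ equals $F_{\psi_F}$, so $\{F_\varphi : \varphi \in \Sigma\} \cup \{E\}$ is exactly the lattice of flats of $M$.

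To conclude, locate a nondegenerate divisor $D^* \in |\Sigma|$ whose image in $|\cV|$ under the parametrization is the origin. By \Cref{thm:local-is-initial}, $M_\Sigma$ at $D^*$ equals $\cV^0 = M$, and by \Cref{thm:local-structure} the local dimension of $|\Sigma|$ at $D^*$ is $r$, forcing $r_{\BN}(\Sigma) = r$ so that $(D, \Sigma)$ is a tropical linear series of dimension $r$ with $M_\Sigma \cong M$. The hard part will be verifying the existence of such a $D^*$: tropical combinations of the $\psi_H$'s can introduce kinks in the interior of the loops whenever coefficient differences lie in the range of tent values, and these kinks enlarge $\#\supp$, so that $D = Kv$ itself is typically not nondegenerate. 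The remedy is either to enlarge $\supp(D)$ to absorb the potential kink locations (placing additional mass at a suitable finite set of candidate points on each loop), or to replace $\Gamma$ with a sufficiently fine subdivision, and then to match cells of $|\Sigma|$ with cells of $|\cV|$ so as to confirm that $D^*$ lies in the stratum corresponding to the origin.
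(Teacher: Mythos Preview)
Your proposal has two interlocking gaps that prevent it from going through as written.

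First, the argument that $r_{\BN}(\Sigma) = r$ is circular. You invoke \Cref{thm:local-structure} to read off the local dimension of $|\Sigma|$ and then conclude $r_{\BN}(\Sigma) = r$, but \Cref{thm:local-structure} (and likewise \Cref{thm:local_matroid}, which even defines $M_\Sigma$) already presupposes that $\Sigma$ is a tropical linear series, i.e., that $r_{\BN}(\Sigma)+1 = r_{\ind}(\Sigma)$. Knowing only $r_{\ind}(\Sigma)\le r+1$ from the parametrization is not enough: you must verify $r_{\BN}(\Sigma)\ge r$ directly. On the bouquet this amounts to showing that for any $r$ points of $\Gamma$ there is a tropical combination of the $\psi_H$'s whose associated divisor dominates them, and you do not address this.

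Second, your identification of $\{F_\varphi : \varphi\in\Sigma\}\cup\{E\}$ with the lattice of flats of $M$ is carried out at $D=Kv$, whose support is $\{v\}$ so that the components of $\Gamma\smallsetminus\supp(D)$ are exactly the $n$ punctured loops. But, as you yourself note, $Kv$ is not nondegenerate in $|\Sigma|$: capping the tents produces divisors of strictly larger support arbitrarily close to $Kv$. The theorem asks for a \emph{nondegenerate} divisor $D^*$; once you move to such a $D^*$, the ground set of the local matroid changes (to the components of $\Gamma\smallsetminus\supp(D^*)$), and your flat computation no longer applies. The appeal to \Cref{thm:local-is-initial} does not repair this: that theorem only asserts that the simplified local matroid is isomorphic to a \emph{submatroid} of the initial matroid $\cV^{\sigma(\varphi)}$, not that it equals $M$. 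Your suggested remedies (enlarging $\supp D$, subdividing $\Gamma$) would each require redoing the entire flat analysis on a different ground set.

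By contrast, the paper works on the interval $[v,w]$ with $D=dv$. There a prior result (\Cref{thm:intervalProof}) already establishes, via a stable-intersection argument with tropical hyperplanes, that $\Phi(\Trop(M))$ has Baker--Norine rank $r$ and is therefore a matroidal linear series of dimension $r$; this disposes of the $r_{\BN}$ issue cleanly. Moreover, on the interval the parametrizing map $\Phi$ restricts to an isomorphism from a neighborhood of $0\in\Trop(M)$ onto a neighborhood of any chosen nondegenerate divisor $D'$ in the interior of the $d$-simplex $|D|$, so $\Star(D')$ is literally the Bergman fan of $M$ and the local matroid is $M$ on the nose. Both obstacles in your approach---the BN-rank and the nondegeneracy---are thus absorbed by the special geometry of the interval, with no further case analysis required.
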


\noindent   More precisely, every loopless matroid is the local matroid of a matroidal linear series at a nondegenerate divisors on an interval and also on a loop. See \Cref{sec:local-matroidal}.  Together with \Cref{thm:local-realizable}, this provides a wealth of examples of non-realizable matroidal linear series.

\medskip

In \Cref{sec:Cartwright}, we revisit a class of rank 2 divisors studied by Cartwright. Let $M$ be a simple rank 3 matroid, let $\Gamma_M$ be the bipartite incidence graph of its flats of rank 1 and 2, and $D_M$ the sum of the vertices corresponding to rank 1 flats.  Then $r_{\BN}(D_M) = 2$ and $D_M$ is the tropicalization of a rank 2 divisor if and only if $M$ is realizable  \cite{Cartwright15}. We show that if $\Sigma \subseteq R(D_M)$ is a tropical linear series of dimension 2 then $D_M$ is contained in $|\Sigma|$ and nondegenerate, and the local matroids $M_{\Sigma}$ that occur in this way are exactly the adjoints of $M$ (Theorems~\ref{thm:local-matroid-adjoint} and \ref{thm:adjoint-and-linear-series}). As a consequence, we obtain a new proof that $D_M$ is not the tropicalization of a rank 2 divisor when $M$ is not realizable (\Cref{cor:Cartwright-not-realizable}).

\subsection{Linear incidence geometry and recursive structures} \label{sec:inductiveprop}

We conclude the introduction with a brief discussion of how strongly recursive tropical linear series arise from properties of linear incidence geometry. We also explain our choice to study the broader class of tropical linear series.

Let $(D_X, V)$ be a linear series of dimension $r$.  For $k < r$, any size $k + 1$ subset of $V$ is contained in a linear subseries $V' \subseteq V$ of dimension $k$. Likewise, if $V'$ and $V''$ are subseries of codimension $c_1$ and $c_2$ in $V$, with $c_1 + c_2 \leq r$, their intersection contains a subseries of codimension $c_1 + c_2$. Thus, if $(D, \Sigma)$ is the tropicalization of a linear series of dimension $r$, any subset of $\Sigma$ of size $k +1$ is contained in the tropicalization of a linear subseries of dimension $k$. Moreover, any two subsets of size $r$ are contained in tropicalizations of linear series of dimension $r -1$ whose intersection contains the tropicalization of a linear series of dimension $r -2$. The following is a restatement of \cite[Definition~6.5]{M23} in the terminology of the present paper.

\begin{definition}
A strongly recursive tropical linear series $\Sigma \subseteq R(D)$ of dimension $r$ is a  tropical linear series of dimension $r$ such that
\begin{enumerate}
    \item Any size $r$ subset $S \subseteq\Sigma$ is contained in a strongly recursive tropical linear subseries $\Sigma_S \subseteq\Sigma$ of dimension $r-1$, and
    \item Given any two size $r$ subsets $S$ and $S'$ of $\Sigma$, the dimension $r-1$ strongly recursive tropical linear subseries $\Sigma_S$ and $\Sigma_{S'}$ containing them can be chosen so that $\Sigma_S \cap \Sigma_{S'}$ contains a strongly recursive tropical linear subseries of dimension $r-2$. 
\end{enumerate}
\end{definition}
\noindent This definition builds in exactly the basic properties from linear incidence geometry that were needed for applications to the Kodaira dimensions of $M_{22}$ and $M_{23}$ in \cite{M23}. A straightforward specialization argument shows that any tropicalization of a linear series of dimension $r$ is a strongly recursive tropical linear series of dimension $r$. However, the recursive nature of these incidence properties makes them exceedingly difficult to verify for any combinatorial examples that do not arise as tropicalizations of algebraic linear series. Other basic properties of linear incidence geometry were omitted from that definition, even though they hold in all vector spaces and hence in all tropicalizations of linear series, simply because they were not needed for applications to the Kodaira dimensions of $M_{22}$ and $M_{23}$.  See \cite{FominPylyavskyy23} for a modern perspective on the complexities of linear incidence geometry. 

Recent work in valuated matroid theory shows that tropical analogs of such basic incidence geometry statements can fail in tropical linear spaces. In particular, the rank 4 Vam\'os matroid does not have the Levi intersection property and hence its associated tropical linear space contains 3 points that are not contained in any tropical linear subspace of codimension 1 \cite[Theorem~E]{Wang24}. Using this, we give a tropical linear series of dimension 3 on the interval and a subset of size 3 that is not contained in any tropical linear subseries of dimension 2. See \Cref{ex:vamos-series}.

Likewise, there is a relaxation of the V\'amos matroid whose Bergman fan contains two tropical linear spaces of codimension 1 whose intersection does not contain any tropical linear space of codimension 2 \cite[Example~6.14]{Wang24}. Using this, we give an example of a tropical linear series with two codimension 1 tropical linear subseries whose intersection does not contain any tropical linear subseries of codimension 2. See \Cref{ex:vamos-relaxiation-series}.

\Cref{ex:vamos-series} shows that non-realizable tropical linear series are not strongly recursive in general.  By relaxing our definition of tropical linear series to include such objects, we get a cleaner and simpler combinatorial theory, worthy of investigation in its own right. For applications to algebraic geometry, nothing is lost in this way; using specialization, one can appeal to the properties that follow from linear incidence geometry whenever one restricts attention to the realizable case.

\begin{remark}
 
This paper supersedes the preprint \cite{JensenPayne22} and incorporates ideas and results from Dupraz's Master's thesis \cite{DuprazMasters}, which will not be published separately.
\end{remark}

\medskip

\noindent {\textbf{Acknowledgments.}} The work of DJ was supported in part by NSF grant DMS--2054135. The work of SP was supported in part by NSF grant DMS--2542134 and a visit to the Institute for Advanced study supported by the Charles Simonyi Endowment.  

We are grateful to M. Baker, D. Cartwright, C. Haase, Y. Luo, D. Maclagan, M.~Mayo~Garc\'ia, and J. Yu for helpful comments on an earlier version of this paper.

\section{Preliminaries} \label{Sec:Prelim}

\subsection{Metric graphs} Roughly speaking, a metric graph is a finite graph $G$, possibly with loops and multiple edges, with a positive real length $\ell(e)$ assigned to each edge $e$ in $G$. Subdividing an edge does not change the resulting metric space, and we consider the metric graphs associated to two such data $(G, \ell)$ and $(G', \ell')$ to be isomorphic whenever the resulting metric spaces are isometric. 

To make this more precise, let $S(n, \epsilon)$ denote the star-shaped set of valence $n$ and radius $\epsilon >0$. It is the pointed length metric space obtained from $n$ copies of the interval $[0,\epsilon]$ by identifying all $n$ copies of the point $0$, and labeling the resulting distinguished point $*$.

\begin{figure}[ht]
    \centering
    \begin{tikzpicture}[scale=1.5]
        \draw (0,0)--(0:1cm)
            (0,0)--(72:1cm)
            (0,0)--(72*2:1cm)
            (0,0)--(72*3:1cm)
            (0,0)--(72*4:1cm);
    \end{tikzpicture}
    \caption{A star-shaped set of valence 5}
\end{figure}
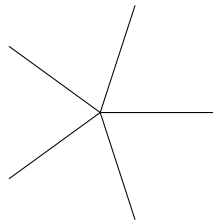

\begin{definition}\label{def:tropical-curve}
    A \emph{metric graph} is a compact connected length metric space in which each point $p$ has a neighborhood $U_p$ with a pointed isometry to a star-shaped set $S(n_p, \epsilon_p)$.
\end{definition}

\noindent By a pointed isometry, we mean an isometry $U_p \xrightarrow{\sim} S(n_p, \epsilon_p)$ that takes $p$ to $*$. Note that the positive integer $n_p$ is independent of the choice of star-shaped neighborhood; it is called the \emph{valence} of $p$. Since a metric graph is compact, it has only finitely many points $p$ of valence $n_p \neq 2$.

\subsection{Models of metric graphs}
Every metric graph can be constructed from a finite graph with a length function on edges, as follows. Let $G$ be a finite graph, possibly with loops and multiple edges, and let $\ell \colon E(G) \to \mathbb{R}_{> 0}$ be a function that assigns a positive real length to each edge.  This data gives rise to a metric graph $\Gamma = \Gamma(G, \ell)$, which is constructed from the disjoint union of the vertex set $V(G)$ and $\bigsqcup_{e \in E(G)} [0,\ell(e)]$ by choosing an orientation of each edge and identifying the endpoints of $[0,\ell(e)]$ with the vertices incident to $e$, accordingly. The data of $(G, \ell)$, plus an isometry $\Gamma(G,\ell) \to \Gamma$, is a \emph{model} of $\Gamma$. Note that every metric graph $\Gamma$ has a model, but these models are never unique, because any edge can be subdivided. In particular, the vertex set $V(G)$ contains the set of points $p \in \Gamma(G, \ell)$ of valence $n_p \neq 2$, but it may be strictly larger. 

Starting from an arbitrary model of $\Gamma$ and subdividing at $p$ if needed, we can construct a model $(G,\ell)$ of $\Gamma$ in which $p$ is a vertex of $G$.  Then $n_p$ is the number of half-edges of $G$ incident to $p$.

Every metric graph has a model $(G, \ell)$ in which the underlying graph $G$ is \emph{simple}, without loops or multiple edges. One can construct such a model from an arbitrary model by subdividing every edge. Also, any two models $(G, \ell)$ and $(G', \ell')$ of $\Gamma$ have a common refinement, obtained by subdividing at all points of $\Gamma$ that are vertices of $G'$ but not vertices of $G$, or vice versa.

\subsection{Tangent directions in metric graphs} 

Following \cite[Section~1.4]{BPR13}, we say that the \emph{tangent directions} of $\Gamma$ at $p$ are the germs of isometric embeddings $\alpha \colon [0,\epsilon] \to \Gamma$ with $\alpha(0) = p$.  Note that the number of tangent directions at $p$ is equal to the valence $n_p$. %

\subsection{Divisors on metric graphs}

The divisor group $\Div(\Gamma)$ is the free abelian group on the points of $\Gamma$. Its elements are \emph{divisors}. The degree of a divisor $D = a_1 p_1 + \cdots + a_r p_r$ is the sum of its coefficients $\deg(D) := a_1 + \cdots + a_r$. A divisor $D$ is \emph{effective}, denoted $D \geq 0$, if $a_i \geq 0$ for all $i$.

Let $\PL(\Gamma)$ denote the group of continuous piecewise-linear functions with integer slopes on $\Gamma$. In other words, a function $\varphi \colon \Gamma \to \RR$ is in $\PL(\Gamma)$ 
if it is continuous and there is some model $(G, \ell)$ of $\Gamma$ such that the restriction of $\varphi$ to each edge is affine linear with integer slope. 

\begin{definition}
Let $\zeta = [\alpha]$ be a tangent direction at $p$.  The \emph{outgoing slope} of $\varphi \in \PL(\Gamma)$ along $\zeta$, denoted $s_\zeta(\varphi)$, is the derivative of $\varphi \circ \alpha$ at $0$. The \emph{incoming slope} of $\varphi$ at $p$ is $-s_\zeta(\varphi)$.
\end{definition}

Let $T_p(\Gamma)$ denote the set of tangent directions of $\Gamma$ at $p$.  The order of $\varphi \in \PL(\Gamma)$ at $p$ is the sum of the incoming slopes
\[
\ord_p(\varphi) := - \sum_{\zeta \in T_p(\Gamma)} s_\zeta(\varphi). 
\]
Note that $\ord_p(\varphi) = 0$ for all but finitely many $p$ and hence
\[
\ddiv(\varphi) := \sum_p \ord_p(\varphi) \cdot  p
\]
is a divisor. The divisors of the form $\ddiv(\varphi)$ for some $\varphi \in \PL (\Gamma)$ are called \emph{principal}, and $$\ddiv(\varphi + \varphi') = \ddiv(\varphi) + \ddiv(\varphi'),$$  so the principal divisors are a subgroup of $\Div(\Gamma)$. Two divisors $D$ and $D'$ in $\Div(\Gamma)$ are equivalent, denoted $D \sim D'$,  if $D - D'$ is principal. Note that the degree of any principal divisor is 0, and hence any two equivalent divisors have the same degree.

\subsection{Complete linear systems} \label{sec:metric} The complete linear system of $D \in \Div(\Gamma)$ is the set of effective divisors equivalent to $D$
\[
|D| := \{ D' \sim D : D' \geq 0 \}.
\] 
We recall, following \cite[Section 4]{HMY12}, that $|D|$ carries a canonical piecewise-linear structure, as follows.

The space of effective divisors of degree $D$ on $\Gamma$ is naturally identified with 
\[
\Sym^d(\Gamma) := \Gamma^d / S_d.
\]
Here, $S_d$ is the symmetric group acting by permuting the factors of $\Gamma^d$.  The topology on $\Sym^d(\Gamma)$, as a quotient of $\Gamma^d$, is compatible with a natural piecewise-linear structure, defined as follows. 

Choose a model $(G,\ell)$ of $\Gamma$ without loops or multiple edges, and choose an orientation of each edge. The relative interior of a face of $\Sym^d(\Gamma)$ corresponds to the following data:
\begin{itemize}
\item For each vertex $v$, a  nonnegative integer $m_v$;
\item For each edge $e$, a finite sequence of positive integers $(m_{e,1}, \ldots, m_{e,k_e})$;
\end{itemize}
subject to the condition
\[
\sum_v m_v + \sum_{e,i} m_{e,i} = d.
\]
We allow the possibility that $k_e = 0$, i.e., the sequence of positive integers associated to an edge $e$ may be empty. A divisor $D$ is in the relative interior of the given face if and only if it contains each vertex $v$ with multiplicity $m_v$ and exactly $k_e$ distinct points in the interior of $e$ with multiplicities $(m_{e,1}, \ldots, m_{e,k_e})$, respectively, where the points are ordered with respect to the chosen orientation on $e$. The relative interior of a face of $\Sym^d(\Gamma)$ is a \emph{combinatorial type} of divisor of degree $d$, and the subspace of divisors of fixed combinatorial type is the interior of a product of simplices.

For any divisor $D$ of degree $d$, there is a natural polyhedral structure on $|D|$ in which the relative interior of a face is composed of divisors $D' = \ddiv(\varphi) + D$ of a fixed combinatorial type and such that $\varphi$ has fixed slopes at the starting point of each half-edge (see \cite[Section~4]{HMY12}).

Note that $|D| \subseteq \Sym^d(\Gamma)$ is a compact, connected Hausdorff space. Its topology has an alternate metric description in terms of $\PL(\Gamma)$, as follows. Let $\|\cdot\|_\infty$ be the function on $\PL(\Gamma)$ given by
\[\|\varphi\|_\infty = \sup(\varphi) - \inf(\varphi).\]
For any $D_1, D_2 \in |D|$, there is some $\varphi \in \PL(\Gamma)$, well-defined up to an additive scalar, such that
$D_1 - D_2 = \ddiv(\varphi)$. The function $\| \cdot \|_\infty$ is invariant under scalar addition, and we define a metric by %
\[
    d_\infty(D_1, D_2) = \|\varphi\|_\infty.
\]
The metric topology agrees with the subspace topology on $|D| \subseteq\Sym^d(\Gamma)$ \cite[Proposition B.1]{Luo18}.

\subsection{Tropical modules}

Let $\Rbar := \RR \cup \{\infty\}$ be the tropical semifield, with addition and multiplication given respectively by
\[
a \oplus b := \min\{a, b\} \quad \mbox{ and } \quad
        a \odot b := a + b.
        \]
Note that $\infty$ is the zero element, i.e., the identity for addition and the absorbing element for multiplication. A \emph{tropical module} $\Sigma$ is a semimodule over $\Rbar$.
A tropical module is \emph{trivial} if it is equal to $\{ \infty \}$.

Let $\Sigma$ be a tropical module. The tropical submodule generated by a subset $S \subseteq \Sigma$, denoted $\langle S \rangle$, is the set of finite tropical linear combinations
\[
a_1 \odot \varphi_1 \oplus \cdots \oplus a_n \odot \varphi_n
\]
with $a_i \in \Rbar$ and $\varphi_i \in S$. It is the smallest tropical submodule of $\Sigma$ that contains $S$. A tropical module $\Sigma$ is \emph{finitely generated} if it is equal to $\langle S \rangle$ for some finite subset $S \subseteq \Sigma$. For any divisor $D$, the tropical module $R(D)$ is finitely generated, and the minimal generating set is essentially unique up to tropical scaling \cite[Corollary~9]{HMY12}. 

\begin{example}
For any set $E$, the real-valued functions $\RR^E$, together with an additional zero element denoted $\infty$, %
is a tropical module with operations defined pointwise:
\[
(\varphi \oplus \varphi') (x) = \min\{ \varphi(x), \varphi'(x) \} \quad \mbox{ and } \quad (a \odot \varphi) (x) = \varphi(x) + a
\]
for $x \in E$ and $a \in \RR$.

Note that $\PL(\Gamma) \subseteq \RR^\Gamma$ is preserved by these operations, and hence $\PL(\Gamma) \cup \{ \infty \}$ is a tropical module.  Similarly, for any divisor $D \in \Div(\Gamma)$, define 
\[
R(D) := \{ \varphi \in \PL(\Gamma) : D + \ddiv(\varphi) \geq 0\}.
\] 
Then $\big(R(D) \cup \{\infty\}\big) \subseteq \big(\PL(\Gamma) \cup \{\infty\}\big)$ is a tropical submodule \cite[Lemma~4]{HMY12}.
\end{example}

\begin{notation}Throughout, we write the tropical module operations on $\PL(\Gamma) \cup \{\infty \}$ and its tropical submodules classically rather than tropically, i.e., we write $\min\{\varphi, \varphi' \}$ for the tropical sum and $\varphi + a$ for the product with a nonzero scalar $a$. When no confusion seems possible, we omit the zero elements (denoted $\infty$), and refer to $R(D)$ and $\PL(\Gamma)$ as tropical modules.
\end{notation}

\subsection{Tropical independence} \label{sec:independence} 
A tropical linear combination of $\{\varphi_0, \ldots, \varphi_r\} \subseteq \PL(\Gamma)$ is an expression of the form
\begin{equation}\label{eq:vartheta}
\theta = \min \{ \varphi_0 + a_0, \ldots, \varphi_r + a_r \},
\end{equation}
where $a_0, \ldots, a_r$ are real numbers.  Note that we consider the tuple $(a_0, \ldots, a_r)$ to be part of the data of the tropical linear combination, even though different tuples may give rise to the same pointwise minimum $\theta$.  

\begin{definition}
The set $\{\varphi_0, \ldots, \varphi_r\} \subseteq \PL(\Gamma)$ is \emph{tropically dependent} if there is a tropical linear combination $\theta$ such that the minimum in \eqref{eq:vartheta} is achieved at least twice at every point in $\Gamma$.
\end{definition}

Such a tropical linear combination is a \emph{tropical dependence}.  In other words, $\theta$ is a tropical dependence if and only if
\begin{equation}\label{eq:tropicaldepend}
\theta = \min_{j \neq i} \{\varphi_j + a_j \} \mbox{ \ for each \ } 0 \leq i \leq r.
\end{equation}
If no such tropical dependence exists, then $\{ \varphi_0, \ldots, \varphi_r\}$ is \emph{tropically independent}.

\begin{example}
Consider the three functions on the interval pictured in \Cref{fig:trop-dep}. Suppose that $\varphi_i$ all have infinum $0$, then $\min\{\varphi_1, \varphi_2, \varphi_3\}$ is a tropical dependence. Note however that we cannot write any of the three functions as a tropical linear combination of the other two. 

    \begin{figure}[ht]
        \centering
        \begin{tikzpicture}
            \draw (-0.5, 2) node {{$\varphi_1$}};
            \draw (-0.5, 1) node {{$\varphi_2$}};
            \draw (-0.5, 0) node {{$\varphi_3$}};
            \draw[shift={(0, 2)}] (0, 0) -- (0.5, 0.5) -- (1, 0) -- (3, 0);
            \draw[shift={(0, 1)}] (0, 0) -- (1, 0) -- (1.5, 0.5) -- (2, 0) -- (3, 0);
            \draw (0, 0) -- (2, 0) -- (2.5, 0.5) -- (3, 0);
        \end{tikzpicture}
        \caption{Three tropically dependent functions on an interval.}
        \label{fig:trop-dep}
    \end{figure}
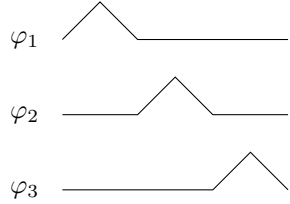
\end{example}

\begin{definition}
    A \emph{certificate of independence} for a set of functions $\{\varphi_0, \dots, \varphi_r\} \subseteq \PL(\Gamma)$ is a tropical linear combination 
    \[\theta = \min\{\varphi_0 + a_0, \ldots, \varphi_r + a_r\}\] such that for each $i$ there is some point at which the minimum is attained uniquely by $\varphi_i + a_i$.
\end{definition}

\noindent We will repeatedly use the fact that a  subset $S \subseteq \PL(\Gamma)$ is tropically independent if and only if there is a certificate of independence for $S$ \cite[Theorem~1.6]{M23}.

\subsection{Matroids and valuated matroids as tropical modules}
\label{sec:matroids}

Matroids are a combinatorial abstraction and generalization of the properties of linear independence for finite subsets of a vector space. The theory is vast; we recall only the minimal background needed for the purposes of this paper and refer the reader to \cite{oxley2006matroid} for further details. 

Some matroids come from vector spaces, as follows. Given a field $K$ and a vector space $V \subseteq K^n$, one obtains a matroid $M_V$ on the set $\{1, \ldots, n \}$ by saying that $I \subseteq \{1, \ldots, n\}$ is \emph{independent} if the corresponding set of coordinate linear functions $\{ e_i^* : i \in I \}$ is linearly independent in the dual vector space $V^*$.  In this situation, the elements $\{1, \ldots, n \}$ are tacitly identified with the coordinate linear functions and are referred to as the \emph{vectors} of $M_V$.  The independent sets of vectors satisfy the following properties:
\begin{itemize}
\item The empty set is independent.
\item Any subset of an independent set is independent.
\item If $I$ and $J$ are independent sets and $|I| < |J|$ then $I \cup \{j \}$ is independent for some $j \in J \smallsetminus I$.
\end{itemize}
By definition, any collection of subsets of a finite set $E$ that satisfy these three properties is the collection of independent sets in a matroid on $E$.
The maximal independent sets are called \emph{bases}, and all bases have the same size, which is the \emph{rank} of the matroid. Matroids that come from a linear subspace $V \subseteq K^n$ as above are called \emph{realizable}, and the rank of $M_V$ is $\dim_K V$.

Matroids have many cryptomorphic axiomatizations. For instance, a matroid $M$ on $E$ is characterized equivalently by its independent sets, its bases, the \emph{rank function} taking a subset of $E$ to the size of the largest independent set that it contains, or by the \emph{flats} which are the subsets that are maximal of a given rank. We give most of our attention to the characterization of matroids in terms of \emph{covectors}. 

Let us return to the case of the realizable matroid $M_V$ associated to a vector space $V \subseteq K^n$. We now assume that the field $K$ is infinite. %
The support of a point $v = (v_1, \ldots, v_n)$ in  $V$ is
\[
\supp(v) := \{ i \in \{1, \ldots, n\} : v_i \neq 0 \}.
\]
The set of \emph{covectors} of $M_V$ is $\{ \supp(v) : v \in V \}$. It has the following properties.
\begin{itemize}
    \item The empty set is a covector.
    \item Any union of covectors is a covector.
    \item If $I$ and $J$ are covectors with $i \in I \cap J$, then there is a covector contained in $I \cup J \smallsetminus \{i \}$ that contains the symmetric difference $I \triangle J$.
\end{itemize}
Conversely, any set of subsets that satisfies these three properties is the set of covectors of a matroid. 

To each matroid $M$ on a finite set $E$, one associates $\Trop(M) \subseteq \Rbar^E$. It is the tropical submodule generated by the tropical indicator functions of the covectors and is a tropical linear space of dimension equal to the rank of $M$. Here, the tropical indicator function of a subset $I \subseteq E$ takes the value $0$ on elements of $I$ and $\infty$ on elements of $E \smallsetminus I$. Let us assume that $\Trop(M) \cap \RR^E$ is nonempty, which is the case if and only if $M$ is \emph{loopless}, i.e., every single element subset of $E$ is independent. Then, $\Trop(M) \cap \RR^E$ is invariant under translation by $(1, \ldots, 1)$ and the image under projection to $\RR^E / (1, \ldots, 1)$ is the support of the Bergman fan $\cB(M)$. (We do not choose a preferred fan structure  and refer to this space simply as the Bergman fan when no confusion seems possible.) The preimage of the Bergman fan is dense in $\Trop(M)$ and the set of covectors of $M$ is exactly the subsets of $E$ whose indicator functions are contained in $\Trop(M)$. Thus, $M$ carries exactly the same information as the Bergman fan $\cB(M) \subseteq \RR^E / (1, \ldots, 1) $ and the tropical submodule $\Trop(M) \subseteq \Rbar^E$.

\bigskip

We are interested not only in matroids but also \emph{valuated matroids}, which similarly abstract and generalize the properties of linear independence for finite subsets of a vector space over a valued field, where one records not only which subsets are linearly dependent, but the valuations of the coefficients in each linear relation. We give particular attention to the tropical modules that arise in this context, i.e., the valuated analogs of the Bergman fan $\cB(M) \subseteq \RR^E / (1,\ldots, 1)$ and of $\Trop(M)$. Our presentation of valuated matroids is very brief. See \cite{MurotaTamura01} for a more detailed foundational treatment, and \cite{BrandtEurZhang21}, and the references therein, for further details on their cryptomorphic axiomatizations and relations to the geometry of tropical linear spaces.

We return to the situation where $V \subseteq K^n$ is a linear subspace, and now we suppose that $K$ carries a nontrivial valuation $\val \colon K \to \Rbar$. To each $v = (v_1, \ldots, v_n)$ in $V$ we associate 
\[
\trop(v) = (\val(v_1), \ldots, \val(v_n)) 
\]
in $\Rbar^n$. The set of \emph{valuated covectors} of the associated valuated matroid $\cV$ is the tropical submodule of $\Rbar^n$ generated by $\{ \trop(v) : v \in V \}$. Assuming the residue field of $K$ is infinite, the valuated covectors satisfy the \emph{valuated exchange axiom}:
\begin{itemize}
    \item If $w = (w_1, \ldots, w_n)$ and $(w'_1, \ldots, w'_n)$ are valuated covectors with $w_i = w'_i$ then there is a valuated covector $w'' = (w''_1, \ldots, w''_n)$ such that $w''_i = \infty$ and $w''_j \geq \min \{w_j, w'_j \}$ for $j \neq i$, with equality when $w_j \neq w'_j$.
\end{itemize}
The valuated matroids that come from subspaces of $K^n$ in this way are called \emph{realizable}. 

Valuated matroids have many different cryptomorphic axiomatizations. Most importantly for our purposes, a valuated matroid is determined by its tropical module of valuated covectors, and any tropical submodule of $\Rbar^n$ that satisfies the valuated exchange axiom occurs in this way. When no confusion seems possible, we identify a valuated matroid $\cV$ with its tropical module of valuated covectors. Hence, we say that a matroidal linear series $\Sigma$ (\Cref{def:MLS}) is the image of a valuated matroid under a homomorphism of tropical modules $\cV \twoheadrightarrow \Sigma$. We note that every valuated matroid is finitely generated as a tropical module, with one generator for each covector of minimal support. Here, the support of $w \in \cV$ is $\supp(w) := \{ i : w_i \neq \infty \}$.

If $\cV \subseteq \Rbar^n$ is a valuated matroid, and $v$ is contained in $\cV \cap \RR^n$, then the image of $\Star(v)$ in $\RR^n/(1, \ldots, 1)$ is the Bergman fan of a matroid, called the \emph{initial matroid} of $\cV$ at $v$. If a valuated matroid $\cV$ is realizable, then all of its initial matroids are realizable. Also, the support sets of elements of $\cV$ are the covectors of a matroid $M$, called the \emph{underlying matroid} of $\cV$. The underlying matroid and initial matroids all have the same rank, and this is the rank of $\cV$.

Finally, let us note that if $M$ is an ordinary matroid, then $\Trop(M)$ is the set of covectors of a valuated matroid $\cV_M$. The underlying matroid of $\cV_M$ is $M$, as is the initial matroid of $\cV_M$ at $0$.  Moreover, $M$ is realizable as a matroid if and only if $\cV_M$ is realizable as a valuated matroid.

\section{Projectivization and dimension for tropical modules}\label{sec:proj-and-dim-trop-module}

In this section, we discuss the relationship between independence rank and dimension for finitely generated tropical modules.

\subsection{Independence rank and generating sets} 

Let $E$ be a possibly infinite set. We are equally interested in the cases where $E$ is finite and where $E$ is a metric graph $\Gamma$. The notions of tropical dependence and independence discussed in \Cref{sec:independence} for subsets of $\PL(\Gamma)$ generalize essentially without change to subsets of tropical submodules of $\Rbar^E$.

\begin{definition}
   The \emph{independence rank} of a tropical submodule $\Sigma \subseteq \Rbar^E$, denoted $r_{\ind}(\Sigma)$,  is the supremum of the sizes of its tropically independent finite subsets.
\end{definition}

We start with a lemma relating  independence rank to independent subsets  of a generating set.

\begin{lemma}
\label{Lem:GeneratorDependence}

Let $S \subseteq \Rbar^E$.  Then the independence rank of the tropical submodule $\langle S \rangle$ is the supremum of the sizes of the tropically independent subsets of $S$.

\end{lemma}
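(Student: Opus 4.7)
The plan is to prove the lemma by establishing two inequalities. Write $\rho(S)$ for the supremum of the sizes of the tropically independent subsets of $S$. The easy inequality $r_{\ind}(\langle S \rangle) \geq \rho(S)$ is immediate, since any tropically independent subset of $S$ is in particular a tropically independent subset of $\langle S \rangle$. The content of the lemma is the reverse inequality, and it suffices to show that every finite tropically independent subset $\{\varphi_0, \ldots, \varphi_r\} \subseteq \langle S \rangle$ gives rise to a tropically independent subset of $S$ of size $r+1$.

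The main tool will be the characterization of tropical independence via certificates. Given a tropically independent set $\{\varphi_0, \ldots, \varphi_r\} \subseteq \langle S \rangle$, fix a certificate of independence
\[
\theta = \min\{ \varphi_0 + a_0, \ldots, \varphi_r + a_r \},
\]
so that for each $i$ there is a point $p_i \in E$ at which $\varphi_i + a_i$ uniquely attains the minimum. Since $\varphi_i \in \langle S \rangle$, write $\varphi_i$ as a finite tropical linear combination $\varphi_i = \min_k \{ s_{ik} + c_{ik} \}$ with $s_{ik} \in S$ and $c_{ik} \in \Rbar$. For each $i$, choose an index $k_i$ realizing the minimum at $p_i$, i.e.\ with $\varphi_i(p_i) = s_{i k_i}(p_i) + c_{i k_i}$, and set $s_i := s_{i k_i}$ and $b_i := a_i + c_{i k_i}$.

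The key step is to verify that $\theta' := \min\{ s_0 + b_0, \ldots, s_r + b_r \}$ is a certificate of independence for $\{s_0, \ldots, s_r\}$. At the point $p_i$ we have $s_i(p_i) + b_i = \varphi_i(p_i) + a_i$ by construction, while for $j \neq i$ the pointwise inequality $\varphi_j \leq s_j + c_{j k_j}$ gives
\[
s_j(p_i) + b_j \; \geq \; \varphi_j(p_i) + a_j \; > \; \varphi_i(p_i) + a_i \; = \; s_i(p_i) + b_i,
\]
using the uniqueness property of the certificate $\theta$ in the strict inequality. Hence $s_i + b_i$ uniquely attains the minimum of $\theta'$ at $p_i$, which is exactly the certificate condition.

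Finally, one must observe that the elements $s_0, \ldots, s_r$ are pairwise distinct (so that they form a size $r+1$ subset of $S$): if $s_i = s_j$ for some $i \neq j$, then the certificate property at $p_i$ forces $b_j > b_i$, while symmetrically at $p_j$ it forces $b_i > b_j$, a contradiction. This is a minor but necessary bookkeeping step; the main obstacle is the construction of the $s_i$ and the verification of the certificate inequalities above, and everything else in the argument is routine.
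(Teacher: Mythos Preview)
Your proof is correct and follows essentially the same approach as the paper: both take a certificate of independence for $\{\varphi_0,\ldots,\varphi_r\}$, select for each $i$ a generator $s_i \in S$ realizing $\varphi_i$ at the distinguished point $p_i$, and verify that the resulting tropical linear combination of the $s_i$ is again a certificate of independence. Your distinctness argument via the contradiction $b_j > b_i$ and $b_i > b_j$ is a minor variant of the paper's, which instead records the intermediate inequality $a_i(x_i)+b_i < a_j(x_i)+b_j$ before concluding.
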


\begin{proof} 
Let $\{y_1, \dots, y_n\} \subseteq \langle S \rangle$. Write each $y_i$ as a finite tropical linear combination $$y_i = \min_{x\in S}\{x + a_i(x)\},$$ where $a_i(x) = \infty$ for all but finitely many $x\in S$. 

Suppose $\{ y_1 , \ldots , y_n \}$ is tropically independent.  We will show that there is a tropically independent subset of $S$ of size $n$. By \cite[Theorem~1.6]{M23} there is a certificate of independence 
\begin{equation} \label{eq:y-indep}
    \theta = \min \{ y_1 + b_1, \ldots, y_n + b_n \}.
\end{equation} 
For each $i \in \{1, \ldots, n\}$, there is a point $p_i$ where $y_i + b_i$ achieves the minimum uniquely in \eqref{eq:y-indep}.  Choose $x_i\in S$ so that $x_i + a_i(x_i)$ achieves the minimum at $p_i$ in the expression $y_i = \min_{x\in S} \{ x + a_i(x) \}$.  We now show that the elements $x_1, \ldots, x_n$ are distinct and $\{x_1, \ldots, x_n \}$ is tropically independent.

Since $y_i + b_i$ achieves the minimum uniquely at $p_i$ in \eqref{eq:y-indep}, we have $$a_i(x_i) + b_i <  a_j(x_i) + b_j \mbox{  for all } j \neq i.$$  
 
It follows that $x_i \neq x_j$ for $i \neq j$, and $$\min \{x_1 + a_1(x_1) + b_1,
\ldots, x_n + a_n(x_n) + b_n  \}$$ is a certificate of independence for $\{ x_1, \ldots, x_n \} \subseteq S$. 
\end{proof}

\noindent By \Cref{Lem:GeneratorDependence}, the independence rank of any finitely generated tropical submodule $\Sigma \subseteq \Rbar^E$ is finite and bounded above by the size of any generating set. 

\begin{lemma} \label{Lem:Submodules}
Let $S \subseteq \Rbar^E$.  For any $x \in \langle S \rangle$, there is a tropically independent finite subset $S' \subseteq S$ such that $x$ is contained in  $\langle S' \rangle$.
\end{lemma}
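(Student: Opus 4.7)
The plan is to take a shortest possible tropical linear combination of elements of $S$ that expresses $x$, and argue that its summands automatically form a tropically independent set by invoking the certificate-of-independence characterization from \cite[Theorem~1.6]{M23}.

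More concretely, since $x \in \langle S \rangle$, I would write
\[
x = \min_{i=1}^n \{ s_i + a_i \}
\]
for some $n \geq 0$, elements $s_1, \ldots, s_n \in S$, and scalars $a_1, \ldots, a_n \in \RR$, choosing such an expression with $n$ as small as possible. (If $x$ is the zero element $\infty$, take $n=0$ and $S' = \emptyset$, which is trivially tropically independent.) Setting $S' := \{s_1, \ldots, s_n\}$, I would then claim that the chosen expression is already a certificate of independence for $S'$ in the sense of the definition recalled just before \Cref{Lem:GeneratorDependence}.

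The only step with real content is verifying this claim, which I would do by contradiction. Suppose that for some index $k$ the function $s_k + a_k$ never attains the minimum uniquely. Then at every point $p \in E$, either $s_k(p) + a_k > x(p)$, or $s_k(p) + a_k = x(p)$ and the minimum is also attained by some $s_j + a_j$ with $j \neq k$; in either case we have $x(p) = \min_{i \neq k} \{ s_i(p) + a_i \}$. This yields a strictly shorter expression for $x$ and contradicts the minimality of $n$. Hence for each $i$ there is a point at which $s_i + a_i$ uniquely attains the minimum, so the expression is a certificate of independence, and \cite[Theorem~1.6]{M23} then gives that $S' \subseteq S$ is tropically independent. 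Since $x$ is by construction a tropical linear combination of the elements of $S'$, the original expression also exhibits $x \in \langle S' \rangle$, completing the argument. I do not foresee a serious obstacle beyond correctly handling the case where the minimum value is $\infty$, which is transparent from the pointwise definition.
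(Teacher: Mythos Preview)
Your proof is correct and follows essentially the same approach as the paper: take a minimal tropical linear combination expressing $x$, observe that minimality forces each summand to achieve the minimum uniquely somewhere, and conclude via the certificate-of-independence characterization. The paper's version is slightly terser (it phrases minimality in terms of the subset $S'$ rather than the number of terms), but the argument is the same.
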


\begin{proof}
Let $x \in \langle S \rangle$, and let $S' \subseteq S$ be minimal such that $x \in \langle S' \rangle$.  Write
\begin{equation} \label{eq:minimal-exp}
x = \min_{y \in S'} \{ y + a(y) \}.
\end{equation}
Since $S'$ is minimal, for each $y \in S'$ there is some $e \in E$ such that $y + a(y)$ achieves the minimum uniquely in \eqref{eq:minimal-exp} at $e$.  Hence \eqref{eq:minimal-exp} is a certificate of independence for $S'$.
\end{proof}

\subsection{Projectivizations of tropical modules} \label{sec:proj-trop-modules} 

If $\Sigma \subseteq \RR^n$ is a finitely generated tropical submodule, then its projectivization $|\Sigma|$ is the tropical convex hull of the images of its generators in $\RR^n/(1,\ldots, 1)$.  In particular, it is a polyhedral set. We now discuss the analogous statements for finitely generated submodules of $\PL(\Gamma)$.  Note that any such submodule is contained in $R(D)$ for some divisor $D$ on $\Gamma$.

For any two functions $\varphi$, $\varphi' \in \PL(\Gamma)$, we have $\ddiv(\varphi) = \ddiv(\varphi')$ if and only if $\varphi' = \varphi + a$ for some real number $a$. Hence the tropical complete linear series $|D|$ is the \emph{projectivization} of $R(D)$, i.e.
\[
|D| = R(D) / \RR.
\]
We likewise denote the projectivization of any tropical submodule $\Sigma \subseteq R(D)$ by 
\[|\Sigma| := \Sigma / \RR.\]
Note that $|\Sigma|$ is a subspace of $|D|.$ 
    If $D \sim D'$, then $D' = D + \ddiv(\varphi)$ for some $\varphi \in \PL(\Gamma)$ and the map $\varphi' \mapsto \varphi' + \varphi$ induces an isomorphism of tropical modules $R(D) \xrightarrow{\sim} R(D')$. This descends to a bijection $|D| \cong |D'|$ preserving the relevant polyhedral structures.

\begin{proposition}
    \label{prop:fin-gen-definable}
    If $\Sigma \subseteq R(D)$ is finitely generated, then $|\Sigma|$ is a polyhedral subset of $\Sym^d\Gamma$.
\end{proposition}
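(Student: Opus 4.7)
The plan is to parametrize $|\Sigma|$ by a tropical projective space and show that, under a natural stratification of the parameter space into polyhedra, each stratum maps piecewise-linearly into a single cell of the polyhedral structure on $|D|$ described in \Cref{sec:metric}. This will exhibit $|\Sigma|$ as a finite union of polyhedra in $\Sym^d(\Gamma)$.

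First I would fix a finite generating set $\{\varphi_0, \ldots, \varphi_r\}$ of $\Sigma$ and a model $(G, \ell)$ of $\Gamma$ refined enough that each $\varphi_i$ is affine with integer slope on every edge of $G$. Every element of $\Sigma$ has the form $\min_i\{\varphi_i + a_i\}$ for some $(a_0, \ldots, a_r) \in \Rbar^{r+1}$ not all equal to $\infty$, and the associated divisor depends only on the class of $(a_i)$ modulo adding a common scalar. This yields a surjection
\[
\Phi \colon \big(\Rbar^{r+1} \smallsetminus \{(\infty, \ldots, \infty)\}\big)/\RR \twoheadrightarrow |\Sigma|, \quad [a_0 : \cdots : a_r] \mapsto D + \ddiv\bigl(\min_i\{\varphi_i + a_i\}\bigr).
\]
I would stratify the parameter space first by the subset $I \subseteq \{0, \ldots, r\}$ of finite coordinates, and then, within each $\RR^I/\RR$, by the combinatorial type of the lower envelope of $\{\varphi_i + a_i\}_{i \in I}$ on $G$: for each vertex and each combinatorial subinterval of each edge, record the subset of indices achieving the minimum there. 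Because on each edge the lower envelope is the pointwise minimum of finitely many affine functions of one variable whose constant terms depend linearly on the $a_i$, only finitely many combinatorial types arise, producing a finite polyhedral stratification of the parameter space.

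On each stratum, $\Phi$ is affine and lands in a single face of $|D|$. Indeed, on a fixed stratum the breakpoints of $\min_i\{\varphi_i + a_i\}$ along each edge move affinely in the $a_i$, while the outgoing slopes at each half-edge are constant (they come from slopes of the $\varphi_i$'s that achieve the minimum there). Hence $D + \ddiv\bigl(\min_i\{\varphi_i + a_i\}\bigr)$ has constant combinatorial type and fixed half-edge slopes, which is precisely the description of the relative interior of a face of $|D|$ given in \cite{HMY12}. The affine image of a polyhedron is a polyhedron, and taking the finite union over all strata shows that $|\Sigma|$ is a polyhedral subset of $\Sym^d(\Gamma)$. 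The main obstacle is the careful bookkeeping needed to verify that the stratification is genuinely finite and compatible with the cell structure on $|D|$, particularly at boundary strata where some generators are effectively dropped (i.e.\ some $a_i = \infty$).
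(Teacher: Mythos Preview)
Your proposal is correct and follows essentially the same route as the paper: parametrize $|\Sigma|$ by tropical linear combinations of a fixed generating set, subdivide the parameter space into finitely many polyhedra according to which generators achieve the minimum on which edges, and observe that $\Phi$ is affine on each piece with image in a single face of $|D|$. The one simplification in the paper's version is that it parametrizes by $\RR^{r+1}$ rather than $\Rbar^{r+1}/\RR$, which is already surjective onto $|\Sigma|$ (since $\Gamma$ is compact, any unused generator can be given a large finite coefficient), thereby sidestepping the boundary-strata bookkeeping you flagged as the main obstacle.
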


\begin{proof} 
    Let  $\{\varphi_0, \dots, \varphi_r\} \subseteq \Sigma$ be a generating set. For $a = (a_0, \ldots, a_r)$ in $\RR^{r+1}$, consider
    \begin{equation} \label{eq:phia}
    \varphi_a = \min\{ \varphi_0 + a_0, \dots, \varphi_r + a_r \}
    \end{equation}
    Consider the map $\Phi \colon \RR^{r+1} \to |\Sigma|$ given by $a \mapsto D + \ddiv(\varphi_a)$. 
    Let $(G,\ell)$ be a model of $\Gamma$ such that each generator  $\varphi_i$ is affine on every edge. Let $I = (I_e)_{e \in E(G)}$ be a collection of subsets of $\{0, \ldots, r \}$ indexed by the edges of $G$. 
    Consider the subset $Q_I \subseteq \RR^{r+1}$ defined by the condition that $\varphi_i + a_i$ achieves the minimum in \eqref{eq:phia} on a nonempty open subset of $e$ if and only if $i \in I_e$.

    The set $Q_I$ is cut out by a finite set of linear inequalities, so it is a polyhedron in $\RR^{r + 1}$, and the nonempty sets $Q_I$ form a polyhedral subdivision of $\RR^{r+1}$. From the description of the polyhedral structure on $\Sym^d(\Gamma)$, it follows that $\Phi(Q_I)$ is contained in a face $\sigma$ of $\Sym^d(\Gamma)$.  The restriction of $\Phi$ maps $Q_I$ affine linearly into $\sigma$, and it follows that $|\Sigma|$ is a polyhedral subset of $\Sym^d(\Gamma)$.
\end{proof}

\subsection{Independence rank and dimension} We now explain how the independence rank controls the dimension of projectivizations of tropical submodules of $\Rbar^n$ and $R(D)$. We use relations to the tropical rank of matrices, as presented, e.g., in  \cite[Chapter 5]{MaclaganSturmfels}. A square matrix with entries from $\Rbar$ is \emph{tropically nonsingular} if the minimum in the tropical determinant is achieved only once. The tropical rank of a matrix with entries from $\Rbar$ is the size of the largest nonsingular square submatrix. 

\begin{proposition}\label{prop:tropical-rank-properties-1}
    Let $\Sigma \subseteq \Rbar^m$ be a tropical submodule generated by $\{v_0,...,v_n\}$. Let $A$ be the tropical matrix with row vectors $v_0,...,v_n$. Then the following are equivalent.
    \begin{enumerate}
        \item The independence rank $r_{\ind}(\Sigma)$ is $r+1$;
        \item The tropical rank of $A$ is $r+1$;
        \item The dimension of $|\Sigma| = \Sigma /\RR$ is $r$.
    \end{enumerate}
\end{proposition}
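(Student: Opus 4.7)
The plan is to prove the three equivalences in two stages: an elementary combinatorial translation between certificates of tropical independence and tropically nonsingular square submatrices handles (1) $\Leftrightarrow$ (2), while the Develin--Sturmfels theorem equating the dimension of a tropical convex hull with the tropical rank of its matrix of generators handles (2) $\Leftrightarrow$ (3).

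For (1) $\Leftrightarrow$ (2), I would first invoke \Cref{Lem:GeneratorDependence} to identify $r_{\ind}(\Sigma)$ with the supremum of sizes of tropically independent subsets of the generating set $\{v_0,\ldots,v_n\}$. The claim then reduces to the following: a subset $\{v_{i_0},\ldots,v_{i_k}\}$ is tropically independent if and only if $A$ contains a tropically nonsingular $(k+1) \times (k+1)$ submatrix whose row indices are $\{i_0, \ldots, i_k\}$. A certificate of independence $\theta = \min_j\{v_{i_j} + a_j\}$ supplies column indices $e_0, \ldots, e_k$ at which each $v_{i_j} + a_j$ is the unique minimizer; after shifting row $i_l$ by $a_l$ (which alters every permutation sum in the tropical determinant by the common constant $\sum_l a_l$), the resulting square submatrix has a unique column minimum along the diagonal in each column, so the identity permutation uniquely attains its tropical determinant. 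Conversely, a tropically nonsingular $(k+1) \times (k+1)$ submatrix yields a certificate of independence by reading off the row shifts induced by its uniquely-minimizing permutation. Taking suprema gives (1) $\Leftrightarrow$ (2).

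For (2) $\Leftrightarrow$ (3), observe that $|\Sigma| = \Sigma/\RR$ is by construction the tropical convex hull of the projected generators $[v_0],\ldots,[v_n]$ in $\Rbar^m/\RR(1,\ldots,1)$. The Develin--Sturmfels theorem on tropical convexity (see \cite[Chapter 5]{MaclaganSturmfels}) asserts that the dimension of the tropical convex hull of a finite set of points in tropical projective space equals the tropical rank of the associated matrix minus one. Applied to $A$, this yields $\dim|\Sigma| = r$ precisely when the tropical rank of $A$ is $r+1$, completing the chain.

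The hardest part is (2) $\Leftrightarrow$ (3), which depends on the nontrivial Develin--Sturmfels theorem whose proof requires a careful analysis of the combinatorial cell structure of tropical convex hulls via the theory of types; in this paper it is cleanest to invoke this result as a black box rather than reprove it. Everything else reduces to direct combinatorial bookkeeping matching certificates of independence with uniquely-minimizing permutations in tropical determinants.
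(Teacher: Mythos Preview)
Your plan is mostly sound and matches the paper for (2) $\Leftrightarrow$ (3) and for (1) $\Rightarrow$ (2). The gap is in your converse (2) $\Rightarrow$ (1): the claim that a tropically nonsingular square submatrix ``yields a certificate of independence by reading off the row shifts induced by its uniquely-minimizing permutation'' does not work as stated. Consider the $2\times 2$ matrix
\[
B=\begin{pmatrix}0&0\\1&0\end{pmatrix}.
\]
The identity permutation is the unique minimizer of the tropical determinant (sums $0$ versus $1$), so the induced row shifts are $a_0=a_1=0$. But in column $1$ the minimum $\min\{0+a_0,\,0+a_1\}$ is achieved by both rows, so these shifts are not a certificate of independence. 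The rows \emph{are} tropically independent, but witnessing this requires a different choice of shifts (e.g.\ $a_0=0$, $a_1=-\tfrac12$); there is no canonical way to extract such shifts from the minimizing permutation alone. The implication ``nonsingular square matrix $\Rightarrow$ rows tropically independent'' is true, but it is a genuine theorem, not a one-line bookkeeping step.

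The paper avoids this difficulty by not proving (2) $\Rightarrow$ (1) directly. Instead it routes through (3): given a nonsingular $(r{+}1)\times(r{+}1)$ submatrix $A'$, the cited result \cite[Theorem~5.3.23]{MaclaganSturmfels} shows the tropical row span of $A'$ is full-dimensional in $\Rbar^{r+1}$, hence after translation contains an open ball about the origin, and in particular contains the $(r{+}1)$ points $(0,\epsilon,\ldots,\epsilon),(\epsilon,0,\ldots,\epsilon),\ldots,(\epsilon,\ldots,\epsilon,0)$, whose coordinatewise minimum is a certificate of independence. Since projection to a subset of coordinates cannot increase independence rank, this gives $r_{\ind}(\Sigma)\ge r+1$. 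You could either adopt this route or, if you prefer to stay combinatorial, cite the equivalence of tropical nonsingularity with tropical independence of rows as a separate black box rather than sketching an argument that does not go through.
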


\begin{proof}
The equivalence of (2) and (3) is \cite[Theorem 5.3.23]{MaclaganSturmfels}. We prove the equivalence of (1) and (2). Assume $r_{\ind}(\Sigma)=r+1$. By \Cref{Lem:GeneratorDependence}, we may assume that $v_0,...,v_r$ are tropically independent. Then there is a certificate of independence, i.e., there are constants $a_0,...,a_r$ and indices $i_0,...,i_r$, such that
    \[\min\{v_0+a_0,...,v_r+a_r\}\]
is uniquely achieved by $v_j + a_j$ in the coordinate labeled by $i_j$. The permutation $j \mapsto i_j$ then corresponds to the unique leading term in the tropical determinant of this $(r+1) \times (r+1)$ submatrix, and hence the tropical rank of $A$ is at least $r+1$.

Conversely, suppose $A$ has a nonvanishing minor $A'$ of size $r+1$. Then by (3), the tropical row span $\Sigma'$ of $A'$ is full-dimensional in $\Rbar^{r+1}$. By translation, we may assume that $\Sigma'$ contains an open ball centered at the origin. In particular, it contains the points 
    \[(0,\epsilon,\epsilon,..., \epsilon),(\epsilon,0,\epsilon,..., \epsilon),..., (\epsilon,\epsilon,..., \epsilon,0)\] for sufficiently small $\epsilon > 0$. This set is tropically independent, since the coordinatewise minimum is a certificate of independence, and hence $r_{\ind}(\Sigma')=r + 1$. Projection to a subset of the coordinates does not increase the independence rank of a tropical module, so it follows that $r_{\ind}(\Sigma)\geq r+1$.
\end{proof}

For finitely generated tropical submodules in $\PL(\Gamma)$, we have a similar statement.

\begin{proposition}\label{prop:tropical-rank-properties-2}
    Let $\Sigma \subseteq \PL(\Gamma)$ be the tropical submodule generated by $\{ \varphi_0,...,\varphi_n \}$. For each finite subset $S = \{ v_1, \ldots, v_s \}$ of $\Gamma$, let $A_S$ be the matrix with $(A_S)_{ij} = \varphi_i(v_j)$. Then the following are equivalent.
    \begin{enumerate}
        \item The independence rank $r_{\ind}(\Sigma)$ is $r+1$;
        \item The maximum of the tropical rank of $A_S$, over all finite subsets $S \subseteq \Gamma$, is $r + 1$;
       
        \item The dimension of $|\Sigma| = \Sigma/\RR$ is $r$.
    \end{enumerate}
\end{proposition}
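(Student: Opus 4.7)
The plan is to parallel the proof of \Cref{prop:tropical-rank-properties-1}, reducing each equivalence to the finite-dimensional case via the evaluation homomorphisms $\mathrm{ev}_S \colon \Sigma \to \Rbar^S$ indexed by finite subsets $S \subseteq \Gamma$. The key observation is that each $\mathrm{ev}_S$ is a tropical module homomorphism, so any tropical dependence in $\Sigma$ restricts to a tropical dependence in $\mathrm{ev}_S(\Sigma)$; equivalently, tropically independent subsets of $\mathrm{ev}_S(\Sigma)$ lift to tropically independent subsets of $\Sigma$.

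For (1) $\Leftrightarrow$ (2), I would combine certificates of independence with \Cref{Lem:GeneratorDependence}. If $r_{\ind}(\Sigma) = r+1$, the lemma supplies $r+1$ tropically independent generators, and a certificate of independence produces points $p_0, \ldots, p_r \in \Gamma$ at which uniqueness holds; taking $S = \{p_0, \ldots, p_r\}$, the corresponding $(r+1) \times (r+1)$ submatrix of $A_S$ is tropically nonsingular, so the tropical rank of $A_S$ is at least $r+1$. Conversely, if some $A_S$ has tropical rank $r+1$, then \Cref{prop:tropical-rank-properties-1} applied to $\mathrm{ev}_S(\Sigma) \subseteq \Rbar^S$ identifies $r+1$ tropically independent rows, which lift back to $r+1$ tropically independent generators of $\Sigma$.

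For (2) $\Leftrightarrow$ (3), \Cref{prop:tropical-rank-properties-1} applied to $\mathrm{ev}_S(\Sigma)$ gives that the tropical rank of $A_S$ equals $\dim|\mathrm{ev}_S(\Sigma)| + 1$ for each finite $S$. Since the composition $\mathrm{ev}_S \circ \Phi$ is piecewise affine on the cells $Q_I$ of the parameterization from \Cref{prop:fin-gen-definable}, the induced surjection $|\Sigma| \twoheadrightarrow |\mathrm{ev}_S(\Sigma)|$ is piecewise affine between polyhedral sets, so $\dim|\mathrm{ev}_S(\Sigma)| \leq \dim|\Sigma|$. For the reverse inequality I would fix a top-dimensional cell $F$ of $|\Sigma|$. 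Its preimage $\td{F} \subseteq \PL(\Gamma)$ is a polyhedron of dimension $\dim F + 1$, and its affine hull has translation subspace $\td{W} \subseteq \PL(\Gamma)$ of dimension $\dim F + 1$, necessarily containing $\RR \cdot \mathbf{1}$. The next step is the following elementary linear-algebraic claim: for any finite-dimensional subspace $\td{W} \subseteq \PL(\Gamma)$ containing $\RR \cdot \mathbf{1}$, there exists a finite $S \subseteq \Gamma$ such that evaluation gives an injection $\td{W}/\RR\mathbf{1} \hookrightarrow \RR^S/\RR$. Indeed, fixing any $p_0 \in \Gamma$, the intersection over all $p \in \Gamma$ of the subspaces $\{\eta \in \td{W} : \eta(p) = \eta(p_0)\}$ equals $\RR \mathbf{1}$, and by finite-dimensionality of $\td{W}$ a finite subcollection already achieves this intersection. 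For such $S$, the affine restriction $\mathrm{ev}_S|_{\td{F}}$ is injective, hence so is the induced $\mathrm{ev}_S|_F$, giving $\dim|\mathrm{ev}_S(\Sigma)| \geq \dim \mathrm{ev}_S(F) = \dim F = \dim|\Sigma|$.

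The main obstacle is this last step: although $\PL(\Gamma)$ is infinite-dimensional, evaluation at finitely many points must detect the full dimension of $|\Sigma|$. This is resolved by observing that any top-dimensional cell of $|\Sigma|$ spans a finite-dimensional affine subspace of $\PL(\Gamma)$, so a standard finite-dimensional linear algebra argument on $\td{W}$ suffices to produce the required $S$.
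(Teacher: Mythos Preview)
Your argument is correct. For (1)$\Leftrightarrow$(2) it coincides with the paper's, which likewise says this step is ``essentially identical'' to \Cref{prop:tropical-rank-properties-1}. For the link to (3), however, the paper takes a shorter and more direct route: it proves (1)$\Leftrightarrow$(3). The lower bound $\dim|\Sigma|\geq r$ comes from observing that a certificate of independence $\theta=\min\{\varphi_0+a_0,\ldots,\varphi_r+a_r\}$ makes the map $(b_0,\ldots,b_r)\mapsto\min_i\{\varphi_i+b_i\}$ an embedding of a neighborhood of $(a_0,\ldots,a_r)$ into $\Sigma$. The upper bound $\dim|\Sigma|\leq r$ uses \Cref{Lem:Submodules}: every element of $\Sigma$ lies in some $\langle S\rangle$ with $S\subseteq\{\varphi_0,\ldots,\varphi_n\}$ tropically independent, hence of size at most $r+1$, so $|\Sigma|$ is covered by finitely many images of $\RR^{r+1}/\RR$. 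Your approach instead proves (2)$\Leftrightarrow$(3) by pushing everything through the evaluation maps $\mathrm{ev}_S$ and invoking \Cref{prop:tropical-rank-properties-1} on the image; the nontrivial step is your observation that a top-dimensional cell of $|\Sigma|$ spans only a finite-dimensional affine subspace of $\PL(\Gamma)$, so finitely many evaluations separate it. This is a genuinely different argument: it is longer and touches the polyhedral structure of $|\Sigma|$ more explicitly, but it has the virtue of making the reduction to the finite case completely transparent, and it avoids appealing to \Cref{Lem:Submodules}. The paper's route is slicker precisely because \Cref{Lem:Submodules} has already absorbed the combinatorial content.
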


\begin{proof}
The proof that (1) is equivalent to (2) is essentially identical to the proof of \Cref{prop:tropical-rank-properties-1}. We now show that (1) is equivalent to (3).

Suppose $r_\ind(\Sigma) = r + 1$.  By \Cref{Lem:GeneratorDependence}, after renumbering, we may assume $\{ \varphi_0, \ldots, \varphi_r \}$ is tropically independent. By \cite[Theorem~1.6]{M23}, there is a certificate of independence $$ \theta = \min\{ \varphi_0 + a_0, \ldots, \varphi_r + a_r \}.$$ The map $(b_0, \ldots, b_r) \mapsto \min\{ \varphi_0 +b_0, \ldots, \varphi_r + b_r \}$ embeds an open neighborhood of $(a_0, \ldots, a_r)$ into  $\Sigma$ and  projects to an $r$-dimensional subset of $|\Sigma|$. Thus, $\dim|\Sigma| \geq r$.  It remains to show that $\dim|\Sigma| \leq r$. 
By \Cref{Lem:Submodules}, $|\Sigma|$ is the union of the subsets $|\langle S \rangle |$, where $S$ ranges over subsets of $\{ \varphi_0, \ldots, \varphi_n \}$ of size $r + 1$.  Each such subset $|\langle S \rangle |$ has dimension at most $r$, as required. 
\end{proof}

In \Cref{sec:local_structures,Sec:Canonical}, we work with a broader class of tropical submodules of $\PL(\Gamma)$, those whose projectivization is polyhedral. %

\begin{definition}
A submodule $\Sigma \subseteq R(D)$ is \emph{polyhedral} if $|\Sigma|$ is a closed polyhedral subset of $|D|$.
\end{definition}

\begin{lemma}\label{lem:polyhedral-submodule-dimension}
    Let $\Sigma\subseteq R(D)$ be a polyhedral submodule. Then the dimension of $|\Sigma|$ is the maximum of the dimensions of its finitely generated submodules.
\end{lemma}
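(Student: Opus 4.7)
The inequality $\dim|\Sigma| \geq \sup_{\Sigma' \subseteq \Sigma \text{ finitely generated}} \dim|\Sigma'|$ is immediate, since $|\Sigma'| \subseteq |\Sigma|$ for any such $\Sigma'$. The content of the lemma is the reverse inequality, so set $d := \dim|\Sigma|$. By Proposition~\ref{prop:tropical-rank-properties-2}, it is enough to exhibit $d+1$ tropically independent elements $\psi_0, \ldots, \psi_d \in \Sigma$; the submodule $\Sigma' := \langle \psi_0, \ldots, \psi_d \rangle$ is then finitely generated with $r_{\ind}(\Sigma') = d+1$, hence $\dim|\Sigma'| = d$.

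To produce the required tuple I would work locally and reduce to the finite-dimensional setting. Fix a polyhedral decomposition of $|\Sigma|$ and choose a point $p$ in the relative interior of a top-dimensional face $F$, of dimension $d$. Pick a model $(G,\ell)$ of $\Gamma$ whose vertex set $S$ refines this decomposition, so that the combinatorial type of the divisor is constant on the relative interior of every face. The evaluation $\mathrm{ev}_S \colon \Sigma \to \Rbar^S$ descends modulo additive scalars to a map whose restriction to a neighborhood of $p$ in $|\Sigma|$ is a local embedding into $\RR^S/\RR$ preserving the dimension $d$. The problem thereby reduces to the finite-dimensional assertion that the tropical submodule $T := \mathrm{ev}_S(\Sigma) \subseteq \Rbar^S$, whose projectivization $|T|$ is polyhedral of dimension $d$, contains $d+1$ tropically independent elements; lifting them back to $\Sigma$ along $\mathrm{ev}_S$ yields the desired $\psi_i$'s.

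\textbf{Main obstacle.} The hard step is the finite-dimensional assertion: proving $r_{\ind}(T) = d+1$ for a tropical submodule $T \subseteq \Rbar^S$ whose projectivization is polyhedral of dimension $d$. The obstruction to a naive approach is that affine independence of projections does not imply tropical independence. For instance, $v_0 = (0,0,0)$, $v_1 = (1,0,0)$, $v_2 = (0,1,0)$ in $\Rbar^3$ have affinely independent images in $\RR^3/(1,1,1)\RR$, yet $v_0 = \min\{v_1,v_2\}$ coordinatewise, so the triple is tropically dependent. The plan to overcome this is to exploit compactness: $|T|$ is the continuous image of the compact set $|\Sigma| \subseteq |D|$, so $|T|$ is a bounded polyhedral tropically convex subset of $\RR^S/\RR$, and by the theory of tropical polytopes it is the tropical convex hull of its finitely many tropical vertices. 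Thus $T$ is effectively finitely generated, and Proposition~\ref{prop:tropical-rank-properties-1} gives $r_{\ind}(T) = d+1$, completing the reduction.
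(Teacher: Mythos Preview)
Your reduction has a real gap at the local-embedding step: the claim that $\mathrm{ev}_S$ is a local embedding near $p$ does not follow from ``$S$ refines the decomposition.'' If some edge of the model carries two or more support points of $D_p$ in its interior, evaluation at the endpoints does not determine their positions. Concretely, on $\Gamma=[0,1]$ with $D=2\cdot 0$, the $2$-dimensional face consisting of divisors $p_1+p_2$ with $0<p_1<p_2<1$ maps under evaluation at $S=\{0,1\}$ onto a $1$-dimensional set, since $\varphi(0)=0$ and $\varphi(1)=p_1+p_2$ record only the sum. The fix is to choose $S$ to contain a point in each connected component of $\Gamma\smallsetminus\supp(D_p)$; then every edge of the model carries at most one support point of any nearby divisor, and evaluation is affine and injective on a neighborhood of $p$. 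Your closing step---that a compact, polyhedral, tropically convex subset of $\RR^S/\RR$ is a tropical polytope and hence finitely generated---is correct but not elementary; it deserves a reference to the tropical convexity literature rather than a bare assertion.

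Once those points are filled in your route is valid, but the paper argues more directly and avoids both the evaluation map and the appeal to tropical polytopes. After re-centering so that $D$ lies in the relative interior of a top face $\tau$, it shows that $\{F_\varphi:\varphi\in\Sigma\}\cup\{E\}$ is a Boolean lattice on a partition of $E$: closure under intersection is immediate, and closure under complement holds because from $\relint(\tau)$ one can reverse the chip-firing move $D\mapsto D+\ddiv(\min\{\varphi,\epsilon\})$. The atoms $A_0,\ldots,A_r$ give $\psi_i\in\Sigma$ with $F_{\psi_i}=A_i$ and $\min\psi_i=0$; then $\min\{\psi_0,\ldots,\psi_r\}=0$ is itself a certificate of independence, and the tangent directions toward the $D+\ddiv(\psi_i)$ positively span the tangent space of $\tau$, giving $\dim|\Sigma|=r=\dim|\langle\psi_0,\ldots,\psi_r\rangle|$ in one stroke. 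Your approach buys a clean reduction to finite-dimensional tropical geometry; the paper's buys a self-contained argument that also foreshadows the local-matroid theory developed in \Cref{sec:local_structures}.
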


\begin{proof}
    It is enough to find one finite subset $S\subseteq \Sigma$ such that $\dim |\langle S \rangle| = \dim |\Sigma|$. Let $\tau$ be a face of maximal dimension in $|\Sigma|$. By re-centering, we may assume that $D$ is in the relative interior of $\tau$. The combinatorial type of divisors is constant in the relative interior of $\tau$, so $D$ is nondegenerate. Let $E=\{C_1,...,C_s\}$ be the connected components of $\Gamma\smallsetminus\supp(D)$. We claim that 
    \[\cL:=\{F_\varphi:\varphi\in \Sigma\}\cup \{E\}\]
    is the lattice of flats of the Boolean matroid on some partition of $E$. 
    First, note that $\cL$ is closed under intersection. Indeed, for $F_1$ and $F_2$ in $\cL$ there are functions $\varphi_1$ and $\varphi_2$ in $\Sigma$ with $F_{\varphi_i} = F_i$ and $\min \varphi_i = 0$. Then $F_{\min \{ \varphi_1, \varphi_2 \}} = F_1 \cap F_2$.
    Next, we assert that $\cL$ is closed under taking complements. To see this, start with any $\varphi$ in $\Sigma$. For simplicity, assume $\min \varphi = 0$. Then 
    \[
    D_\epsilon := \ddiv (\min \{ \varphi, \epsilon \}
    \]
    is contained in $\tau$ for sufficiently small $\epsilon > 0$. This divisor may be thought of as a minor variant of ``chip-firing along $\varphi_{\min}$ for time $\epsilon$" in the sense of \cite[Remark~2.9]{GST22}; the variation is that we allow slope equal to $D(x)$, rather than $1$, along a tangent direction at a point $x$ in the boundary of $\varphi_{\min}$. Since $D$ is in the relative interior of a maximal face $\tau$, the opposite of any such chip-firing move is well-defined in $\Sigma$, for sufficiently small $\epsilon$, i.e., we can fire $\overline{\Gamma \smallsetminus \varphi_{\min}}$, with the opposite slopes, for a sufficiently small time $\epsilon'$. In particular, $\cL \cup E$ is closed under complements, as claimed. Since $\cL \cup E$ is closed under complements and intersections, it is a Boolean lattice.

    Let $A_0,...,A_r$ be the minimal nonempty elements of $\cL$. Choose $\psi_0,...,\psi_r\in \Sigma$ such that $F_{\psi_i}=A_i$ and $\min\psi_i=0$ for all $i$. Let $D_i=D+\ddiv(\psi_i)$ and let  $\eta_i$ be the tangent vector at $D$ in the direction of the tropical line segment connecting $D'$ and $D$. Then $\eta_0,...,\eta_r$ positively span the tangent space of $\tau$ at $D$, so $\dim|\Sigma|\leq r$. On the other hand, $\min\{\psi_0,...,\psi_r\}$ is a certificate of tropical independence. By \Cref{prop:tropical-rank-properties-2}, 
    $\dim|\langle \psi_0,...,\psi_r\rangle|=r$, and $\dim|\Sigma|=\dim |\langle \psi_0,...,\psi_r\rangle|$, as required.
\end{proof}

\begin{corollary}\label{cor:tropical-rank-properties-3}
    Let $\Sigma \subseteq R(D)$ be a polyhedral submodule. For $S = \{\varphi_0, \ldots, \varphi_s\}$ a finite subset of $\Sigma$ and $\{v_1, \ldots, v_n \}$ a finite subset of $\Gamma$, let $A_{ST}$ be the matrix with $(A_{ST})_{ij} = \varphi_i(v_j)$.  Then the following are equivalent.
    \begin{enumerate}
        \item The independence rank $r_{\ind}(\Sigma)$ is $r+1$;
        \item The maximum of the tropical rank of the matrices $A_{ST}$, over all choices of finite subsets $S \subset \Sigma$ and $T \subset \Gamma$, is $r + 1$;
        \item The dimension of $|\Sigma| = \Sigma/\RR$ is $r$.
    \end{enumerate}
\end{corollary}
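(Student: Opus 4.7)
The plan is to bootstrap from the three preceding results in this section: Propositions~\ref{prop:tropical-rank-properties-1} and \ref{prop:tropical-rank-properties-2}, and Lemma~\ref{lem:polyhedral-submodule-dimension}. The point is that, in the polyhedral setting, both the independence rank and the dimension of $|\Sigma|$ are controlled by the finitely generated submodules of $\Sigma$, for which the analogous equivalence is already known.

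For the implication $(1) \Leftrightarrow (3)$, I would first establish the identity
\[
r_{\ind}(\Sigma) \;=\; \sup\{\, r_{\ind}(\Sigma') : \Sigma' \subseteq \Sigma \text{ a finitely generated submodule} \,\}.
\]
The inequality $\leq$ is immediate: any tropically independent finite subset $S \subseteq \Sigma$ lies in the finitely generated submodule $\langle S \rangle$, and by \Cref{Lem:GeneratorDependence} it remains independent there. The inequality $\geq$ is also clear. Now \Cref{prop:tropical-rank-properties-2} yields $r_{\ind}(\Sigma') = \dim|\Sigma'| + 1$ for each finitely generated $\Sigma' \subseteq \Sigma$, while \Cref{lem:polyhedral-submodule-dimension} gives $\dim|\Sigma| = \sup_{\Sigma'} \dim|\Sigma'|$. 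Splicing these three identities together yields $r_{\ind}(\Sigma) = \dim|\Sigma| + 1$, and in particular shows that both quantities are finite and realized by a common finitely generated submodule.

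For $(1) \Leftrightarrow (2)$, I would observe that evaluation at any finite subset $T = \{v_1, \ldots, v_n\} \subseteq \Gamma$ is a tropical module homomorphism $\PL(\Gamma) \to \Rbar^T$ whose restriction to $\langle S \rangle$ has image equal to the tropical row span of $A_{ST}$ in $\Rbar^T$. By \Cref{prop:tropical-rank-properties-1}, the tropical rank of $A_{ST}$ equals the independence rank of this row span, which is at most $r_{\ind}(\langle S \rangle) \leq r_{\ind}(\Sigma)$ because tropical module homomorphisms do not increase independence rank. Conversely, given a tropically independent subset $\{\varphi_0, \ldots, \varphi_r\} \subseteq \Sigma$, a certificate of independence supplies witness points $v_0, \ldots, v_r \in \Gamma$ and scalars $a_0, \ldots, a_r$ such that $\varphi_i(v_i) + a_i < \varphi_j(v_i) + a_j$ for all $j \neq i$; taking $S = \{\varphi_0, \ldots, \varphi_r\}$ and $T = \{v_0, \ldots, v_r\}$, the tropical determinant of the resulting square matrix $A_{ST}$ is uniquely minimized by the identity permutation, so $A_{ST}$ has tropical rank $r + 1$.

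Since every ingredient has already been developed in this section, there is no serious obstacle: the proof amounts to careful bookkeeping in reducing statements about the possibly non-finitely-generated submodule $\Sigma$ to their already established counterparts for finitely generated submodules, using the polyhedrality hypothesis only through Lemma~\ref{lem:polyhedral-submodule-dimension}.
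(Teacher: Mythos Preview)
Your proposal is correct and is exactly the argument the paper has in mind: the corollary is stated without proof because it follows immediately from Propositions~\ref{prop:tropical-rank-properties-1}, \ref{prop:tropical-rank-properties-2}, and Lemma~\ref{lem:polyhedral-submodule-dimension} in the way you describe. You have simply made explicit the bookkeeping that the paper leaves to the reader.
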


\subsection{Pure dimensionality of tropical linear series} \label{sec:localdim}

\Cref{prop:tropical-rank-properties-2} shows that the independence rank determines the global dimension of a finitely generated submodule of $\PL(\Gamma)$ and, more genearlly, any polyhedral submodule of $R(D) \subset \PL(\Gamma)$. We now show that the Baker--Norine rank gives a lower bound for its local dimension. This completes the proof of \Cref{thm:puredim}, showing that if $\Sigma$ is a tropical linear series of Baker-Norine rank $r$ then $|\Sigma|$ has pure dimension $r$. This theorem first appeared in the Master's thesis of the second named author \cite{DuprazMasters}. Similar statements with somewhat different proofs appeared later in \cite{AGG}. Dupraz's Master's thesis will not be published otherwise, and we include a brief but complete presentation based on his original proof.

\medskip

\begin{definition}
    Given an an effective divisor $D'$, we define
    \begin{equation*}
        \Sigma(-D') := \{\varphi \in \Sigma : D - D' + \ddiv(\varphi) \geq 0 \}.
    \end{equation*}
\end{definition}
\noindent Note that $\Sigma(-D')$ is a tropical submodule of $\Sigma$.

\medskip

\begin{proposition}
\label{rem:submodule-affine}
Let $\tau \subseteq |\Sigma|$ be a maximal face.  For any effective divisor $D'$, $|\Sigma(-D')|\cap\relint(\tau)$ is the intersection of $\relint(\tau)$ with a finite disjoint union of affine subspaces.  In particular, $\Sigma(-D) \subseteq R(D-D')$ is a polyhedral tropical submodule.
\end{proposition}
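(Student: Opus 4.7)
My plan is to reduce the statement to polyhedral bookkeeping inside a single face $\sigma$ of $\Sym^d(\Gamma)$ containing $\relint(\tau)$, using the explicit description of faces from Section 2.6. I would first choose a model $(G,\ell)$ of $\Gamma$ fine enough that $\relint(\tau)$ lies in the relative interior of a unique face $\sigma$ of $\Sym^d(\Gamma)$. In the coordinates from Section 2.6, a divisor in $\relint(\sigma)$ has the form
\[
E = \sum_v m_v \cdot v + \sum_e \sum_{i=1}^{k_e} m_{e,i} \cdot p_{e,i},
\]
where the vertex multiplicities $m_v$ and edge chip multiplicities $m_{e,i}$ are fixed by $\sigma$ and the free parameters are the distinct, strictly ordered points $p_{e,1} < \cdots < p_{e,k_e}$ in the interior of each edge $e$.

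The next step is to translate $E \geq D'$ pointwise. At a vertex $v \in \supp(D')$, the condition $m_v \geq D'(v)$ is combinatorial, so it holds uniformly on $\relint(\sigma)$ or fails identically --- in the latter case $|\Sigma(-D')| \cap \relint(\tau)$ is empty. At a point $p \in \supp(D') \cap \relint(e)$, the condition requires that one of the free chips $p_{e,i}$ land at $p$ with $m_{e,i} \geq D'(p)$. Enumerating the admissible assignments of chips to points of $\supp(D')$ on each edge (with distinct chips for distinct points) gives finitely many combinatorial choices. For each such choice, setting the selected $p_{e,i(p)}$ equal to the prescribed points cuts out an affine subspace $L$ of the ambient parameter space of $\sigma$. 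The strict ordering $p_{e,1} < \cdots < p_{e,k_e}$ in $\relint(\sigma)$ forces the intersections of these affine subspaces with $\relint(\sigma)$ to be pairwise disjoint, and their union is exactly $\{ E \in \relint(\sigma) : E \geq D' \}$. Intersecting with $\relint(\tau) \subseteq \relint(\sigma)$ then yields the first claim.

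For the ``in particular'' clause, taking closures exhibits $|\Sigma(-D')| \cap \tau$ as a finite union of closed polytopes in each face $\tau$ of $|\Sigma|$, so $|\Sigma(-D')|$ is a closed polyhedral subset of $|D|$; the shift by $D'$ then exhibits $\Sigma(-D') \subseteq R(D-D')$ as polyhedral. The main subtlety I anticipate is the disjointness verification in the third step, which leans crucially on the strict ordering of the $p_{e,i}$ in $\relint(\sigma)$: without this, combinatorially distinct index assignments could produce overlapping strata, and the decomposition into \emph{disjoint} affine subspaces would fail.
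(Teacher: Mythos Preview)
Your proposal is correct and takes essentially the same approach as the paper: both arguments use that divisors in $\relint(\tau)$ have a fixed combinatorial type, parametrize them by the positions of the free support points in the interiors of edges, and observe that requiring a point of $\supp(D')$ to appear in the support amounts to fixing one of these coordinates, with the strict ordering of the $p_{e,i}$ forcing the resulting affine pieces to be disjoint. Your write-up is more explicit about the model, the chip-to-point assignments, and the multiplicity constraints $m_{e,i}\geq D'(p)$, but the underlying mechanism is identical to the paper's terser proof.
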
 %

\begin{proof}
     All divisors in the relative interior of $\tau$ have the same combinatorial type, and hence can be determined by the positions of the points of their support in the interiors of edges of $\Gamma$. Fixing an order on these points induces an inclusion $\relint(\tau)$ into some $\RR^k$. Fixing any given point in the support then corresponds to fixing that coordinate. It follows that the set of divisors in $\tau$ whose support contains a given point is a finite union of affine subspaces of $\tau$, and these subspaces do not intersect in the relative interior of $\tau$. This implies that $|\Sigma(-D')|\cap\relint(\tau)$ is a finite disjoint union of affine subspaces of $\relint(\tau)$, and the proposition follows. %
\end{proof}

Figure~\ref{Fig:AffineSubspace} illustrates \Cref{rem:submodule-affine} in the case where $D$ is a divisor of degree 2 on an interval $\Gamma$, $x$ is a point in $\Gamma$, and $\Sigma = R(D)$. The dashed lines indicate the set $\Sigma (-x)$. 

\begin{figure}[ht]
        \centering
        \begin{tikzpicture}
            \draw (0,0) -- (0,2) -- (2,0) -- (0,0);
            \draw[dashed] (1,0) -- (1,1) -- (0,1);
        \end{tikzpicture}
        \caption{$|\Sigma (-x)| \cap\relint(\tau)$ is a union of affine subspaces.}
        \label{Fig:AffineSubspace}
    \end{figure}
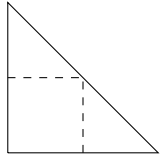

Finitely generated submodules of $R(D)$ are polyhedral by \Cref{prop:fin-gen-definable}.  If $\Sigma \subseteq R(D)$ is finitely generated, we do not know in general whether $\Sigma(-D')$ is finitely generated.  However, \Cref{rem:submodule-affine} shows that if $\Sigma$ is polyhedral then so is $\Sigma(-D')$.  Thus, the class of polyhedral submodules of $R(D)$ is well-suited for inductive arguments, as in the proof of the following theorem.

\begin{theorem} \label{thm:lowerBoundDim}
    If $\Sigma \subseteq R(D)$ is a polyhedral tropical submodule then every maximal face of $|\Sigma|$ has dimension at least $r_\BN(\Sigma)$.
\end{theorem}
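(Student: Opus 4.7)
The plan is to induct on $r := r_\BN(\Sigma)$. The base case $r = 0$ is immediate, since every nonempty polyhedron has dimension at least zero. For the inductive step, fix $r \geq 1$, assume the conclusion for all polyhedral submodules of smaller Baker--Norine rank, and let $\tau$ be a maximal face of $|\Sigma|$. I first reduce to the case $\dim \tau \geq 1$. Because $\Sigma$ is closed under tropical linear combinations, $|\Sigma|$ is tropically convex and hence path-connected (tropical segments provide explicit paths). If $\tau$ were $0$-dimensional, maximality would force it to be isolated in $|\Sigma|$, so $|\Sigma| = \{D^*\}$ would consist of a single divisor; but no single divisor satisfies $D^* - p \geq 0$ for every $p \in \Gamma$, contradicting $r \geq 1$.

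Pick $D^* \in \relint(\tau)$. Since $\dim \tau \geq 1$, the relative interior of $\tau$ is a positive-dimensional open piece of the face of $\Sym^d\Gamma$ parametrizing a fixed combinatorial type, so at least one ``moving-point'' coordinate varies nontrivially on $\relint(\tau)$. Let $x \in \supp(D^*)$ be the position of such a moving point, so that $x$ lies in the interior of an edge of $\Gamma$. The submodule $\Sigma(-x) \subseteq R(D-x)$ is polyhedral by \Cref{rem:submodule-affine}, and $r_\BN(\Sigma(-x)) \geq r - 1$ by a one-line check: for any effective $D''$ of degree $r-1$, the divisor $x + D''$ has degree $r$, so $r_\BN(\Sigma) = r$ supplies $\varphi \in \Sigma$ with $D - x - D'' + \ddiv(\varphi) \geq 0$, placing $\varphi$ in $\Sigma(-x)$ and witnessing the needed inequality. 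Since $x \in \supp(D^*)$, we have $D^* \in |\Sigma(-x)|$; let $\sigma$ be a maximal face of $|\Sigma(-x)|$ containing $D^*$, so that $\dim \sigma \geq r - 1$ by the inductive hypothesis.

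The remaining step is to show $\dim \sigma \leq \dim \tau - 1$, which then forces $\dim \tau \geq r$. Maximality of $\tau$ together with $D^* \in \relint(\tau)$ guarantees a neighborhood $U$ of $D^*$ in $|\Sigma|$ is contained in $\tau$, so $\sigma \cap U \subseteq \tau \cap |\Sigma(-x)|$. By \Cref{rem:submodule-affine}, this local intersection coincides with a single affine subspace $A \subseteq \relint(\tau)$ through $D^*$, and the crucial point to verify carefully is that $A$ has codimension exactly one in $\tau$. This is precisely where the choice of $x$ is used: near $D^*$, the only way a divisor in $\tau$ can acquire $x$ in its support is for the specific moving point of $D^*$ located at $x$ to remain at $x$ (other moving points sit at different positions, and the fixed points sit at vertices rather than in the interior of edges), which is a single linear condition, and that condition is nontrivial on $\relint(\tau)$ precisely because we chose a coordinate that varies. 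Passing to tangent cones at $D^*$ then gives $\dim \sigma \leq \dim A = \dim \tau - 1$, and the induction closes. The main obstacle is this last codimension-one bookkeeping on the polyhedral structure of $\Sym^d \Gamma$; once one confirms that fixing a single, genuinely varying moving-point coordinate drops the local dimension by exactly one, the rest of the argument is a clean inductive reduction powered by \Cref{rem:submodule-affine}.
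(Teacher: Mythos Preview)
Your proof is correct and follows essentially the same approach as the paper's: induct on $r_\BN(\Sigma)$, use connectivity to ensure $\dim\tau\geq 1$, choose a point $x\in\supp(D^*)$ whose position genuinely varies on $\tau$, pass to $\Sigma(-x)$, and observe via \Cref{rem:submodule-affine} that near $D^*$ the subcomplex $|\Sigma(-x)|$ sits in a single affine slice of $\relint(\tau)$ of strictly smaller dimension. The paper phrases the choice of $x$ slightly differently---it picks a second point $D'\in\tau$ and takes $x\in\supp(D)\smallsetminus\supp(D')$, then notes the affine span of the resulting face $\tau'\subseteq|\Sigma(-x)|$ misses $D'$---but this is exactly your ``moving-point coordinate that varies'' argument in different clothing.
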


\noindent 
\Cref{thm:puredim} is an immediate consequence of \Cref{prop:tropical-rank-properties-2} and \Cref{thm:lowerBoundDim}.

\begin{proof}
We proceed by induction on $r = r_{\BN}(\Sigma)$. The case $r = 0$ is trivial.
Suppose $r \geq 1$, let $\tau \subseteq |\Sigma|$ be a maximal face, and let $D$ be a divisor in the relative interior of $\tau$. Since $|\Sigma|$ is connected, there is a divisor $D' \neq D$ in $\tau$. 
Since every divisor in $\tau$ is of the same combinatorial type, there is a point $x \in \supp(D)$ that is not $\supp(D')$. It is immediate from \Cref{def:submodule-rank} that $r_{\BN}\big(\Sigma(-x)\big) \geq r -1$. Also, $|\Sigma(-x)| \cap \relint(\tau)$ is a disjoint union of affine subspaces, by \Cref{rem:submodule-affine}. Therefore, $D$ is in the relative interior of a maximal face $\tau' \subseteq |\Sigma(-x)|$ whose affine span does not contain $D'$. In particular,  $\dim(\tau) \geq \dim(\tau') + 1$. The theorem follows since, by induction, we have $\dim(\tau') \geq r-1$.  
\end{proof}

\section{Big minimizers and local matroids}\label{sec:local_structures}

In this section, we prove \Cref{Prop:NoIsolatedPoints} and \Cref{thm:local-structure}, showing that a tropical linear series of dimension $r$ locally looks like the Bergman fan of a matroid of rank $r+1$ in a neighborhood of any nondegenerate divisor. We also state and prove \Cref{thm:local_matroid}, which constructs a local matroid $M_{\Sigma}$ associated to a more general class of pairs $(D,\Sigma)$ such that $\Sigma \subseteq R(D)$.

\medskip

Let $D$ be an effective divisor on $\Gamma$, and let $\varphi \in R(D)$.
\begin{lemma}\label{lem:minimizer}
The set $\varphi_{\min}:= \{x\in \Gamma : \varphi (x)=\min \varphi \}$ is a union of points in $\supp(D)$ and closures of connected components of $\Gamma \smallsetminus \supp(D)$.
\end{lemma}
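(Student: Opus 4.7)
The plan is to reduce the statement to a tropical maximum principle on the connected components of $\Gamma \smallsetminus \supp(D)$. First I would observe that every point $x \in \varphi_{\min}$ either lies in $\supp(D)$, in which case it already contributes one of the allowed ``points in $\supp(D)$'' in the asserted decomposition, or it lies in some connected component $C$ of $\Gamma \smallsetminus \supp(D)$. Hence it suffices to establish the following claim: if a component $C$ of $\Gamma \smallsetminus \supp(D)$ meets $\varphi_{\min}$, then $\overline{C} \subseteq \varphi_{\min}$.

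The key local ingredient uses that $\varphi \in R(D)$ forces $\ord_p(\varphi) \geq -D(p)$ at every $p \in \Gamma$. At any $p \in \Gamma \smallsetminus \supp(D)$ this gives $\ord_p(\varphi) \geq 0$, i.e.\ $\sum_{\zeta \in T_p(\Gamma)} s_\zeta(\varphi) \leq 0$. If in addition $p$ lies in $\varphi_{\min}$, then $p$ is a global minimum of $\varphi$, so every outgoing slope satisfies $s_\zeta(\varphi) \geq 0$. Combining these two inequalities forces $s_\zeta(\varphi) = 0$ for all $\zeta \in T_p(\Gamma)$. Since $\varphi$ is piecewise linear, after choosing a model of $\Gamma$ in which $\varphi$ is affine along each half-edge at $p$, the vanishing of every outgoing slope at $p$ implies that $\varphi$ is constant, equal to $\min \varphi$, on an entire neighborhood of $p$ in $\Gamma$.

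The proof then finishes by a connectedness argument. Fix a component $C$ that meets $\varphi_{\min}$ and set $M := \varphi_{\min} \cap C$. The set $M$ is closed in $C$ by continuity of $\varphi$, and it is open in $C$ by the local-constancy conclusion of the previous paragraph applied at any point of $M$. Since $C$ is connected and $M$ is nonempty, $M = C$; passing to closures in $\Gamma$ then yields $\overline{C} \subseteq \varphi_{\min}$, as required. The main step, which is really where all of the content lives, is the combination of the $R(D)$-constraint with the local-minimum inequalities in the second paragraph to force every outgoing slope of $\varphi$ at an interior point of $C \cap \varphi_{\min}$ to vanish; the rest of the argument is formal.
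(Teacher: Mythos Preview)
Your proof is correct and takes essentially the same approach as the paper: both combine the nonnegativity of outgoing slopes at a minimum with the constraint $\ord_p(\varphi) \geq -D(p)$ from $\varphi \in R(D)$. The paper phrases the conclusion contrapositively---showing that every boundary point of $\varphi_{\min}$ must lie in $\supp(D)$ because at least one outgoing slope there is strictly positive---while you run the equivalent open--closed argument on each component, but the content is the same.
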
 

\begin{proof}
Let $x$ be a point in the boundary of $\varphi_{\min}$.  Then all of the slopes of $\varphi$ along tangent directions at $x$ are nonnegative, and those along tangent directions that leave $\varphi_{\min}$ are strictly positive. Then $x \in \supp(D)$, because $D + \ddiv(\varphi)$ is effective,  and the lemma follows.
\end{proof}

Let $E = \{ C_1, \ldots, C_s \}$ be the set of  connected components of $\Gamma \smallsetminus \supp(D)$. For $\varphi \in R(D)$, define
\[
F_{\varphi} :=\{C_i \in E : C_i \not \subseteq \varphi_{\min} \}.
\]
The following basic properties are immediate from the definition.
\begin{itemize}
    \item The set $F_\varphi$ is empty if and only if $\varphi$ is constant.
    \item If $\min \varphi_1 = \min \varphi_2$ then $F_{\min \{ \varphi_1, \varphi_2 \}} = F_{\varphi_1} \cap F_{\varphi_2}$.
    \item If $\supp(D + \ddiv(\varphi)) \cap C_i \neq \emptyset$ then $C_i \in F_\varphi$.
\end{itemize}

\medskip

Let $\Sigma \subseteq R(D)$ be a tropical submodule, and let $r = r_{\BN}(\Sigma)$.

\begin{lemma} \label{lem:any-r}
Any size $r$ subset $S \subseteq E$ is contained in $F_\varphi$ for some $\varphi \in \Sigma$. 
\end{lemma}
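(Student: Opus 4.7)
The plan is to apply the definition of Baker--Norine rank directly to a divisor built from one test point inside each component in $S$. Write $S = \{C_{i_1}, \ldots, C_{i_r}\}$. Since each $C_{i_j}$ is an open, path-connected subset of the metric graph $\Gamma \smallsetminus \supp(D)$, I can choose a point $p_j$ in the interior of $C_{i_j}$, with all $p_j$ distinct and none in $\supp(D)$. Set $D' := p_1 + \cdots + p_r$, an effective divisor of degree $r$. By $r_\BN(\Sigma) = r$, there is some $\varphi \in \Sigma$ with $D - D' + \ddiv(\varphi) \geq 0$, equivalently $D + \ddiv(\varphi) \geq D'$.

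The remaining step is to show $C_{i_j} \not\subseteq \varphi_{\min}$ for each $j$, which will give $S \subseteq F_\varphi$. Since $p_j \notin \supp(D)$, the divisor $D$ has multiplicity $0$ at $p_j$, so the inequality $D + \ddiv(\varphi) \geq D'$ forces $\ord_{p_j}(\varphi) \geq 1$. By the definition $\ord_{p_j}(\varphi) = -\sum_{\zeta \in T_{p_j}(\Gamma)} s_\zeta(\varphi)$, the sum of outgoing slopes of $\varphi$ at $p_j$ is at most $-1$, so at least one outgoing slope is strictly negative. Following that tangent direction a short distance from $p_j$ produces a point $q$ with $\varphi(q) < \varphi(p_j)$, so $p_j \notin \varphi_{\min}$. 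Since $p_j \in C_{i_j}$, this gives $C_{i_j} \not\subseteq \varphi_{\min}$, as desired.

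The only conceptual point is recognizing that placing a single test chip in the interior of each component of $\Gamma \smallsetminus \supp(D)$ suffices: one chip per component is enough to force a strict decrease of $\varphi$ somewhere in that component, because the chip must be absorbed by $\ddiv(\varphi)$, and positive order at an interior point is incompatible with that point being a minimum. I expect no serious obstacle; the argument is a direct unpacking of the definitions once the test divisor $D'$ is chosen.
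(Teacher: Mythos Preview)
Your proof is correct and follows essentially the same approach as the paper: pick a point in each component of $S$, use $r_{\BN}(\Sigma)=r$ to find $\varphi$ with $D+\ddiv(\varphi)\geq p_1+\cdots+p_r$, and conclude $C_{i_j}\in F_\varphi$. The only cosmetic difference is that the paper invokes the already-listed property ``$\supp(D+\ddiv(\varphi))\cap C_i\neq\emptyset \Rightarrow C_i\in F_\varphi$'' directly, whereas you re-derive this via the slope argument at $p_j$.
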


\begin{proof}
After renumbering, we may assume $S = \{ C_1, \ldots, C_r \}$. Choose $x_i \in C_i$. Since $r_{\BN}(\Sigma) = r$, there is some $\varphi \in \Sigma$ such that $\{ x_1, \ldots, x_r \} \subseteq \supp(D + \ddiv(\varphi))$. Then $S \subseteq F_\varphi$.
\end{proof}

Recall from \Cref{def:nondegenerate} that $D \in |\Sigma|$ is nondegenerate if it is locally maximal for $\# \supp$ and locally minimal for the valence 1 degree. The following proposition captures a key technical property of nondegenerate divisors.

\begin{proposition}
\label{Prop:NoIsolatedPoints}
Suppose $\Sigma \subseteq R(D)$ is a tropical linear series and $D \in |\Sigma|$ is nondegenerate. Then, for all $\varphi \in \Sigma$, $\varphi_{\min}$ contains no isolated points. 
\end{proposition}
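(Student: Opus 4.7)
The plan is to argue by contradiction. Suppose some $\varphi \in \Sigma$ has an isolated point $p$ in $\varphi_{\min}$, and after tropical scaling arrange $\min\varphi=0$. The key observation is that $D\in|\Sigma|$ forces some constant function to lie in $\Sigma$, so (by tropical scalar multiplication) every constant does; in particular, $\chi_t := \min\{\varphi, t\}\in\Sigma$ for every $t>0$, and the divisor $D_t := D+\ddiv(\chi_t)$ lies in $|\Sigma|$. As $t\to 0^+$, $\chi_t$ converges pointwise to $0$, so $D_t\to D$ in $|\Sigma|$, and $D_t$ eventually enters any prescribed neighborhood of $D$. I will choose $t>0$ small and a regular value of $\varphi$ and show that $D_t$ either has strictly more support points than $D$, or strictly smaller valence-$1$ degree, contradicting nondegeneracy.

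I then analyze $D_t$ locally. Let $n$ be the valence of $p$ and let $s_1,\ldots,s_n\in\ZZ_{\geq 1}$ be the outgoing integer slopes of $\varphi$ along the tangent directions $\zeta_1,\ldots,\zeta_n$ at $p$. A direct slope computation yields that $D_t$ has multiplicity $d_p-\sum_i s_i\geq 0$ at $p$, and for each $i$ a new support point $q_i^t$ on $\zeta_i$ at distance $t/s_i$ from $p$ with positive multiplicity $s_i$. For every other connected component $X_j$ of $\varphi_{\min}$, which by Lemma~\ref{lem:minimizer} is a union of closures of components of $\Gamma\smallsetminus\supp(D)$, the same type of computation shows that the support of $D_t$ in a small neighborhood of $X_j$ consists of a subset of $\partial X_j$ together with one new $\{\varphi=t\}$-boundary point per tangent direction at a boundary point of $X_j$ going out of $X_j$. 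I will argue that each such $X_j$ contributes a nonnegative net change to $\#\supp$: at each proper outgoing boundary point $x\in\partial X_j$ there are at least $1$ new $\{\varphi=t\}$-points, while $x$ can be lost from $\supp(D_t)$ at most once.

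The conclusion is a case analysis on $n$. If $n\geq 2$, the isolated point $p$ alone contributes at least $n-1\geq 1$ to the change in $\#\supp$, forcing $\#\supp(D_t)>\#\supp(D)$ and contradicting the local-maximum property of $\#\supp$ at $D$. If $n=1$, I split further: if $d_p>s_1$ then $p$ survives and $\#\supp$ still strictly increases; if $d_p=s_1$ then $p$ disappears and is replaced by the interior (valence-$2$) point $q_1^t$, so the valence-$1$ degree drops by $s_1$ at $p$. To finish this last subcase I must verify that no other $X_j$ can contribute a compensating valence-$1$ change: a valence-$1$ boundary point of a $1$-dimensional $X_j$ necessarily has its unique tangent direction pointing into $X_j$ (otherwise it would be disconnected from the rest of $X_j$ and would constitute its own isolated component of $\varphi_{\min}$), and so $\chi_t$ has order zero there and the valence-$1$ contribution is unchanged. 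This contradicts the local-minimum property of the valence-$1$ degree at $D$.

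The main obstacle is the nonnegativity bookkeeping for the $1$-dimensional components $X_j$ and the verification in the valence-$1$ subcase that they cannot compensate the drop at $p$; once the combinatorics of outgoing versus incoming tangent directions at boundary points is pinned down, the rest is a routine local PL computation.
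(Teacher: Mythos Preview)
Your proof is correct and follows essentially the same approach as the paper: form $\chi_t = \min\{\varphi, t\}$ for small $t$, and show that $D_t = D + \ddiv(\chi_t)$ violates nondegeneracy of $D$ via the case split on the valence of the isolated point. The paper's version is considerably terser and does not spell out the bookkeeping you carry out at the other components of $\varphi_{\min}$; one minor slip in your write-up is that Lemma~\ref{lem:minimizer} does not force the other $X_j$ to be one-dimensional (there could be further isolated points), but your inequalities go through for those without change.
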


\noindent In particular, if $D$ is nondegenerate and $\varphi \in \Sigma$, then $\varphi_{\min}$ is the closure of some union of connected components of $\Gamma \smallsetminus \supp(D)$.

\begin{proof}%
Since $D \in |\Sigma|$, the constant functions are in $\Sigma$.  Suppose there is some $\varphi \in \Sigma$ such that $\varphi_{\min}$ contains an isolated point $x \in \supp (D)$.  Without loss of generality, assume that $\min \varphi = 0$.  Choose a small $\epsilon > 0$, and let $\varphi_{\epsilon} := \min \{ \varphi , \epsilon \}$.  For each tangent direction $\zeta$ at $x$, let $s_{\zeta} > 0$ denote the slope of $\varphi$ along $\zeta$.  Then $\supp(D + \ddiv(\varphi_\epsilon))$ contains $s_{\zeta}$ points at distance $\epsilon/s_{\zeta}$ from $x$ along each tangent direction $\zeta$. If there is only one tangent direction, then the valence one degree of $D$ is not locally minimal. And if there is more than one tangent direction, then $\# \supp(D)$ is not locally maximal.  Either way, $D$ is not nondegenerate.
\end{proof}

If $\Sigma \subseteq R(D)$ and $D \in |\Sigma|$ is nondegenerate, then for every $\varphi \in \Sigma$, $\varphi_{\min}$ is a nonempty union of components $C_i \in E$. Our construction of a local matroid $M_{\Sigma}$ works more generally, whenever the following condition is satisfied:

\begin{definition}\label{def:dagger-condition}
A tropical submodule $\Sigma \subseteq R(D)$ has \emph{big minimizers}
if for every $\varphi \in \Sigma$, the set $\varphi_{\min}$ contains some $C_i \in E$.
\end{definition}

\noindent  In other words, we say that a tropical submodule $\Sigma \subseteq R(D)$ has big minimizers if it contains no $\varphi$ with $\varphi_{\min} \subseteq \supp(D)$. In particular, if $D \in |\Sigma|$ is nondegenerate, then $\Sigma$ has big minimizers. However, we also consider cases where $\Sigma \subseteq R(D)$ has big minimizers but $D \in |\Sigma|$ is degenerate (Example~\ref{ex:K-degen}), and cases where $\Sigma  \subseteq R(D)$ but $D \not \in |\Sigma|$ (Examples~\ref{ex:canonical} and \ref{ex:vamos-relaxiation-series}).  See \Cref{rmk:DnotinSigma}.

\begin{theorem}\label{thm:local_matroid}
Let $\Sigma \subseteq R(D)$ be a tropical linear series of dimension $r$ with big minimizers.  Then 
\[
\{ F_\varphi : \varphi \in \Sigma \} \cup \{ E \}
\]
is the lattice of flats of a matroid $M_{\Sigma}$ of rank $r +1$ on $E$, and $M_{\Sigma}$ is loopless if and only $D \in |\Sigma|$.
\end{theorem}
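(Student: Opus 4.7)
My plan is to verify that $\mathcal{L} := \{F_\varphi : \varphi \in \Sigma\} \cup \{E\}$ satisfies the lattice-of-flats axioms for a matroid of rank $r+1$, and then deduce the loopless characterization from this description.

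The key preliminary observation is a dichotomy: for every $\varphi \in \Sigma$ and every component $C_i \in E$, the intersection $\varphi_{\min} \cap C_i$ is either empty or all of $C_i$. This follows because $\supp(D) \cap C_i = \emptyset$ together with $D + \ddiv(\varphi) \geq 0$ forces $\sum_{\zeta} s_\zeta(\varphi)(p) \leq 0$ at every interior point $p \in C_i$, making $\varphi|_{\overline{C_i}}$ ``concave''; a maximum-principle argument rules out an interior minimum unless $\varphi$ is constant on $\overline{C_i}$. Consequently $F_\varphi = \{ C_i \in E : C_i \cap \varphi_{\min} = \emptyset \}$, and by the big minimizer hypothesis $F_\varphi \subsetneq E$. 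Rescaling so $\min \varphi_1 = \min \varphi_2 = 0$, we have $\bigl(\min\{\varphi_1,\varphi_2\}\bigr)_{\min} = (\varphi_1)_{\min} \cup (\varphi_2)_{\min}$; combined with the dichotomy, this yields $F_{\min\{\varphi_1,\varphi_2\}} = F_{\varphi_1} \cap F_{\varphi_2}$, so $\mathcal{L}$ is closed under intersection.

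The technical core is the cover axiom and the rank computation. Given $F = F_\varphi \in \mathcal{L}$ and $C_j \in E \smallsetminus F$, closure under intersection yields a unique minimal flat $F' \in \mathcal{L}$ containing $F \cup \{C_j\}$. To show $F'$ is a cover of $F$, I plan to study the polyhedral tangent cone at $D' := D + \ddiv(\varphi)$ in $|\Sigma|$: adapting the ``chip-firing in the direction of $C_j$'' analysis from \Cref{lem:polyhedral-submodule-dimension}, extending $F$ to include $C_j$ is controlled by a unique minimal block of components, and any flat strictly between $F$ and $F'$ would conflict with this minimality via the local polyhedral structure of $|\Sigma|$. For the rank, I would obtain the upper bound by converting any chain $F_0 \subsetneq \cdots \subsetneq F_k$ in $\mathcal{L}$ into a tropically independent subset $\{\varphi_0, \ldots, \varphi_k\} \subseteq \Sigma$: using ``layered'' representatives (with $F_{\varphi_i} = F_i$ and $\min \varphi_i = 0$) and points $p_i \in C_{j_i}$ chosen from $F_i \smallsetminus F_{i-1}$, one builds a certificate of independence by suitable tropical scalars, forcing $k+1 \leq r_\ind(\Sigma) = r+1$. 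The matching lower bound follows by iteratively applying \Cref{lem:any-r} to produce a maximal chain. The loopless statement is then immediate: $M_\Sigma$ is loopless iff $\emptyset \in \mathcal{L}$, iff some $\varphi \in \Sigma$ has $\varphi_{\min} = \Gamma$ (i.e., is constant), iff $D \in |\Sigma|$, since $D + \ddiv(c) = D$ for any constant $c$.

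The main obstacle is the cover axiom. Mere closure under intersection is insufficient, as shown by a toy lattice such as $\{\emptyset, \{b\}, \{a,b\}, \{b,c\}, \{a,b,c\}\}$, which is intersection-closed but not a matroid lattice. The hypotheses $r_\ind(\Sigma) = r_\BN(\Sigma) + 1$ and big minimizers combine, through the polyhedral geometry of $|\Sigma|$ near $D'$, to force the matroid partition property; disentangling this from the local combinatorics of the tangent cone is where the real work lies.
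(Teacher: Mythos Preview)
Your setup is correct and matches the paper: intersection closure of $\{F_\varphi\}$, the chain-length bound via tropical independence (this is the paper's \Cref{prop:length-upper-bound}), the use of \Cref{lem:any-r} to witness large flats, and the loopless characterization are all as you describe.

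The gap is your approach to the cover/partition axiom. You propose to analyze the polyhedral tangent cone at $D' = D + \ddiv(\varphi)$, adapting the chip-firing argument from \Cref{lem:polyhedral-submodule-dimension}. But that lemma operates at a divisor in the relative interior of a \emph{maximal} face of $|\Sigma|$, where the local lattice is already Boolean; for an arbitrary $\varphi$ with $F_\varphi = F$ an arbitrary flat, $D'$ need not sit in any such position, and its local structure is precisely what is in question. Your sketch ``any flat strictly between $F$ and $F'$ would conflict with minimality'' also conflates two things: $F'$ is minimal only among flats containing $F \cup \{C_j\}$, whereas the dangerous intermediate flats are those \emph{not} containing $C_j$, and nothing in that minimality statement excludes them.

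The paper bypasses polyhedral geometry entirely with a purely combinatorial criterion (\Cref{lem:matroid_lemma}): if $\cF$ is a collection of proper subsets of $E$, closed under intersection, of chain length at most $r$, and such that every size-$r$ subset of $E$ lies in some member of $\cF$, then $\cF \cup \{E\}$ is the lattice of flats of a rank-$(r+1)$ matroid. You have already established all three hypotheses (closure under intersection; the chain bound from $r_\ind = r+1$; the $r$-subset condition from $r_\BN = r$ via \Cref{lem:any-r}), and big minimizers gives $E \notin \cF$. The partition property is proved inside \Cref{lem:matroid_lemma} by a short generator-counting argument: a minimal generating set for any $F \in \cF$ has size $k \leq r$, so adjoining any $b \notin F$ yields a set of size $\leq r$ whose closure stays in $\cF$, and the chain-length bound then forces this closure to cover $F$. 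No reference to $|\Sigma|$ or its tangent cones is needed.
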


Before proving Theorem~\ref{thm:local_matroid}, we illustrate its conclusion with two examples and three corollaries.

\begin{example} \label{ex:canonical}
Let $\Gamma$ be a metric graph of genus $g \geq 2$ without vertices of valence 1, and let $G$ be the model of $\Gamma$, whose vertex set consists of points of $\Gamma$ of valence at least 3.  Let $K_\Gamma$ be the canonical divisor on $\Gamma$. Haase, Musiker, and Yu observed that a subdivision of $|K_\Gamma|$ contains the Bergman fan of the cographic matroid of $\Gamma$ as a subfan \cite[Theorem~25]{HMY12}. We refine their observation as follows. Recall that the  support of $K_\Gamma$ is the set of vertices, and the multiplicity of a vertex $v$ in $K_\Gamma$ is $n_v - 2$. The set $E$ of connected components of $\Gamma \smallsetminus \supp(K_\Gamma)$ is the set of edges of $G$.  

Let $\Sigma \subseteq R(K_\Gamma)$ be a tropical linear series of dimension $g-1$.  We will show that $\Sigma$ has big minimizers and the local matroid $M_{\Sigma}$ is the cographic matroid of $G$. Note that $K_\Gamma$ may or may not be contained in $|\Sigma|$. For instance, if $\Gamma$ is the complete graph $K_4$, then $K_\Gamma$ is contained in a realizable tropical linear series of dimension 2, by \cite[Proposition~3.27]{DuprazMasters}, cf. \cite[Example~26]{HMY12} and \cite[Example~6.5]{MUW21}. 
On the other hand, when $\Gamma$ is a barbell graph, there is a unique linear series $\Sigma \subseteq R(K_\Gamma)$ of dimension 1, and $|\Sigma|$ does not contain $K_\Gamma$. See \Cref{ex:canonical-divisor-complete-linear-system}. 

Let $\varphi \in R(K_{\Gamma})$.  By \cite[Lemma~3.2]{JensenPayne16}, $\varphi_{\min}$ is a nonempty union of edges in $G$ and has no points of valence 1.  Since $\varphi_{\min}$ is a union of edges, $\Sigma$ has big minimizers.  Furthermore, since the union of edges has no points of valence 1, $\varphi_{\min}$ is a union of circuits in $G$.  Thus, each $F_{\varphi}$ is a flat of the cographic matroid.  Conversely, by \cite[Proposition~3.3]{JensenPayne16}, every corank 1 flat of the cographic matroid is of the form $F_\varphi$ for some $\varphi \in \Sigma$.  Since every flat of a matroid is an intersection of corank 1 flats, it follows that every flat of the cographic matroid is a flat of $M_{\Sigma}$, and hence $M_{\Sigma}$ is the cographic matroid, as claimed.
\end{example}

\begin{example} \label{ex:K-degen}
When $K_\Gamma$ is in $|\Sigma|$, $K_\Gamma$ may be degenerate or nondegenerate. Let $\Gamma_1$ be a chain of two loops with no bridge, and let $\Gamma_2$ be a metric graph modeled on the graph consisting of two vertices connected by three edges, as pictured in Figure~\ref{Fig:KDegenOrNot}. Then both $R(K_{\Gamma_1})$ and $R(K_{\Gamma_2})$ are tropical linear series of dimension one by Riemann-Roch and Proposition~\ref{cor:tropical-rank-properties-3}. It is easy to check that $K_{\Gamma_1}$ is degenerate, while $K_{\Gamma_2}$ is nondegenerate.

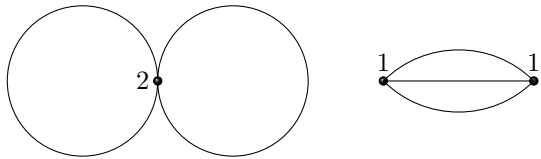
\begin{figure}[ht]
\begin{tikzpicture}

\draw (0,0) circle (1);
\draw (2,0) circle (1);
\draw [ball color=black] (1,0) circle (0.55mm);
\draw (0.8,0) node {2};

\draw [ball color=black] (4,0) circle (0.55mm);
\draw [ball color=black] (6,0) circle (0.55mm);
\draw (4,0) -- (6,0);
\draw (4,0) to [out=45,in=135,looseness=1] (6,0);
\draw (4,0) to [out=315,in=225,looseness=1] (6,0);
\draw (4,0.25) node {1};
\draw (6,0.25) node {1};

\end{tikzpicture}
\caption{The graphs $\Gamma_1$ and $\Gamma_2$ of Example~\ref{ex:K-degen}, with their canonical divisors.}
\label{Fig:KDegenOrNot}
\end{figure}
\end{example}

\noindent The existence of a tropical linear series $\Sigma \subseteq R(K_\Gamma)$ of dimension $g-1$ follows from specialization, by tropicalizing the canonical linear series on any curve with skeleton $\Gamma$.  However, we do not have any direct combinatorial existence proof, let alone a classification of the tropical linear series of dimension $g-1$ that are contained in $R(K_\Gamma)$.

If $M$ and $M'$ are matroids on a set $E$, then $M$ is a \emph{matroid quotient} of $M'$ if every flat of $M$ is a flat of $M'$. The following is an immediate consequence of \Cref{thm:local_matroid}.

\begin{corollary}\label{cor:incidence-local-matroid}
If $\Sigma \subseteq \Sigma' \subseteq R(D)$ are tropical linear series that have big minimizers, then $M_{\Sigma}$ is a matroid quotient of $M_{\Sigma'}$.
\end{corollary}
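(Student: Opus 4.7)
The plan is to unwind the definition of matroid quotient and observe that it follows directly from \Cref{thm:local_matroid} together with the hypothesis $\Sigma \subseteq \Sigma'$. By definition, $M_\Sigma$ is a matroid quotient of $M_{\Sigma'}$ precisely when both matroids are on the same ground set and every flat of $M_\Sigma$ is a flat of $M_{\Sigma'}$. Since $\Sigma$ and $\Sigma'$ are both submodules of $R(D)$ for the same divisor $D$, the set $E$ of connected components of $\Gamma \smallsetminus \supp(D)$ is the common ground set of $M_\Sigma$ and $M_{\Sigma'}$, so only the containment of flats needs to be checked.

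By \Cref{thm:local_matroid}, the flats of $M_\Sigma$ are $\{F_\varphi : \varphi \in \Sigma\} \cup \{E\}$, and the flats of $M_{\Sigma'}$ are $\{F_\varphi : \varphi \in \Sigma'\} \cup \{E\}$. Since each $F_\varphi$ is determined by the function $\varphi$ alone (independent of the ambient submodule), the containment $\Sigma \subseteq \Sigma'$ immediately gives $\{F_\varphi : \varphi \in \Sigma\} \subseteq \{F_\varphi : \varphi \in \Sigma'\}$. Adjoining $E$ to both sides shows that every flat of $M_\Sigma$ is a flat of $M_{\Sigma'}$, completing the proof.

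There is no real obstacle here; the statement is a direct consequence of the fact that \Cref{thm:local_matroid} describes the flats of $M_\Sigma$ in terms of the pointwise data $F_\varphi$ of individual elements $\varphi \in \Sigma$, and this description is manifestly monotone in $\Sigma$.
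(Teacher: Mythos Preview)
Your proof is correct and follows essentially the same approach as the paper: the paper's proof is a single sentence observing that $\Sigma \subseteq \Sigma'$ implies every flat of $M_\Sigma$ is a flat of $M_{\Sigma'}$, which is exactly what you spell out in more detail.
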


\begin{proof}
If $\Sigma \subseteq \Sigma' \subseteq R(D)$, then every flat of $M_{\Sigma}$ is a flat of $M_{\Sigma'}$. %
\end{proof}

\begin{corollary}\label{prop:maximality-tls}
    If $\Sigma \subseteq \Sigma' \subseteq R(D)$
    are tropical linear series of the same dimension then $\Sigma = \Sigma'$.
\end{corollary}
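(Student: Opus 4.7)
My plan is to argue by contradiction: assume $|\Sigma| \subsetneq |\Sigma'|$ and exhibit a loop in a local matroid of $\Sigma$ that forces the local matroid of $\Sigma'$ to have strictly larger rank. Let $r$ be their common dimension. Since $|\Sigma|$ is closed in $|\Sigma'|$, the difference $|\Sigma'| \smallsetminus |\Sigma|$ is a nonempty open subset of $|\Sigma'|$. By the density of the nondegenerate locus in $|\Sigma'|$ noted after \Cref{def:nondegenerate}, this difference contains a divisor $D_0$ that is nondegenerate in $|\Sigma'|$ but not contained in $|\Sigma|$.

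Next, I would recenter at $D_0$. Pick $\varphi_0 \in \Sigma'$ with $D_0 = D + \ddiv(\varphi_0)$ and set
\[
\Sigma_0 := \{\psi \in \PL(\Gamma) : \psi + \varphi_0 \in \Sigma\}, \qquad \Sigma'_0 := \{\psi \in \PL(\Gamma) : \psi + \varphi_0 \in \Sigma'\}.
\]
Then $\Sigma_0 \subseteq \Sigma'_0 \subseteq R(D_0)$ are tropical linear series of dimension $r$ with $|\Sigma_0| = |\Sigma|$ and $|\Sigma'_0| = |\Sigma'|$. By construction, $D_0 \in |\Sigma'_0|$ is nondegenerate, while $D_0 \notin |\Sigma_0|$.

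Applying \Cref{Prop:NoIsolatedPoints} to $\Sigma'_0$ at $D_0$ and invoking \Cref{lem:minimizer}, every $\varphi \in \Sigma'_0$ has $\varphi_{\min}$ equal to a union of closures of connected components of $\Gamma \smallsetminus \supp(D_0)$, so $\Sigma'_0$ has big minimizers; the inclusion $\Sigma_0 \subseteq \Sigma'_0$ then forces $\Sigma_0$ to have big minimizers as well. \Cref{thm:local_matroid} therefore produces local matroids $M_{\Sigma_0}$ and $M_{\Sigma'_0}$ on the set $E$ of components of $\Gamma \smallsetminus \supp(D_0)$, both of rank $r+1$: the former has a loop because $D_0 \notin |\Sigma_0|$, and the latter is loopless because $D_0 \in |\Sigma'_0|$. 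By \Cref{cor:incidence-local-matroid}, every flat of $M_{\Sigma_0}$ is a flat of $M_{\Sigma'_0}$.

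The contradiction then comes from a chain-length computation in the flat lattice. A maximal flag of flats of $M_{\Sigma_0}$ has the form $L \subsetneq F_1 \subsetneq \cdots \subsetneq F_r = E$, where $L$ is the nonempty loop set. Since $M_{\Sigma'_0}$ is loopless, $\emptyset$ is a flat of $M_{\Sigma'_0}$, and the quotient property supplies the strictly longer flag $\emptyset \subsetneq L \subsetneq F_1 \subsetneq \cdots \subsetneq F_r$ of length $r+1$ in the flat lattice of $M_{\Sigma'_0}$, forcing $\rk(M_{\Sigma'_0}) \geq r+2$ and contradicting $\rk(M_{\Sigma'_0}) = r+1$. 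The main technical point is verifying the big-minimizers hypothesis for the subseries $\Sigma_0$ after recentering at a point that lies outside $|\Sigma|$; this is exactly what \Cref{Prop:NoIsolatedPoints} achieves by being applied upstairs in $\Sigma'_0$ and then restricted to $\Sigma_0$.
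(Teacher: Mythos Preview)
Your approach is essentially the same as the paper's: recenter at a nondegenerate divisor of $|\Sigma'|$, observe that big minimizers for $\Sigma'_0$ pass to the submodule $\Sigma_0$, and exploit the matroid-quotient relationship between the two local matroids of rank $r+1$. The paper simply invokes the standard fact that a matroid quotient of the same rank is an equality (hence $M_{\Sigma}$ is loopless, hence $D\in|\Sigma|$ for every nondegenerate $D\in|\Sigma'|$); your contrapositive chain-extension argument is one way to prove that fact.

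There is, however, an off-by-one slip in your final step. In a matroid of rank $r+1$ the lattice of flats is graded of length $r+1$, so a maximal flag in $M_{\Sigma_0}$ reads
\[
L=F_0\subsetneq F_1\subsetneq\cdots\subsetneq F_{r+1}=E,
\]
not $\cdots\subsetneq F_r=E$. Prepending $\emptyset$ then gives a chain of length $r+2$ in the flat lattice of $M_{\Sigma'_0}$, which is what forces $\rk(M_{\Sigma'_0})\geq r+2$. With your indices as written, the extended chain has length $r+1$ and only yields $\rk(M_{\Sigma'_0})\geq r+1$, which is no contradiction. Once the indexing is corrected, your argument is complete and matches the paper's.
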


\begin{proof}
    Up to replacing $D$, $\Sigma$, and $\Sigma'$ with $D + \ddiv(\varphi)$,  
    $\Sigma - \varphi$ and $\Sigma' - \varphi$, respectively, we may assume that $D$ is non-degenerate in $|\Sigma'|$. Then $\Sigma'$ has big minimizers, and hence so does $\Sigma$.
    By \Cref{cor:incidence-local-matroid},
    $M_{\Sigma}$ is a matroid quotient of $M_{\Sigma'}$.  Since these matroids have the same rank, they are equal. This implies in particular that $M_{\Sigma}$ is loopless, so $D \in |\Sigma|$. Since this is true for every nondegenerate $D \in |\Sigma'|$ and the nondegenerate divisors are dense, it follows that  $|\Sigma| = |\Sigma'|$.
\end{proof}

\begin{corollary}\label{cor:strong-maximality-tls}
    Let $\Sigma \subseteq R(D)$ be a tropical linear series of dimension $r$. If $\Sigma' \subseteq R(D)$ is a tropical submodule such that $\Sigma \subseteq \Sigma'$ and $r_{\ind}(\Sigma') = r + 1$, then  $\Sigma = \Sigma'$.
\end{corollary}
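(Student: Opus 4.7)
The plan is to suppose for contradiction that there is some $\varphi \in \Sigma' \smallsetminus \Sigma$ and produce a contradiction by applying \Cref{prop:maximality-tls} to an intermediate finitely generated tropical linear series. The key observation is that since $\Sigma$ is finitely generated (as part of the data of a tropical linear series), the enlarged module $\Sigma_\varphi := \langle \Sigma \cup \{\varphi\} \rangle$ is also finitely generated, and hence much more tractable than the arbitrary submodule $\Sigma'$.

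First I would control the independence rank of $\Sigma_\varphi$. Since any tropically independent subset of a submodule remains independent in any larger submodule, the chain $\Sigma \subseteq \Sigma_\varphi \subseteq \Sigma'$ yields
\[
r+1 = r_{\ind}(\Sigma) \leq r_{\ind}(\Sigma_\varphi) \leq r_{\ind}(\Sigma') = r+1,
\]
so $r_{\ind}(\Sigma_\varphi) = r+1$. Because $\Sigma_\varphi$ is finitely generated, its projectivization is polyhedral by \Cref{prop:fin-gen-definable}, and then \Cref{prop:tropical-rank-properties-2} gives $\dim|\Sigma_\varphi| = r$.

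Next I would pin down the Baker--Norine rank of $\Sigma_\varphi$. The definition of $r_{\BN}$ is monotonic with respect to inclusion of submodules, so $r_{\BN}(\Sigma_\varphi) \geq r_{\BN}(\Sigma) = r$. In the other direction, \Cref{thm:lowerBoundDim} applied to the polyhedral submodule $\Sigma_\varphi$ gives $r_{\BN}(\Sigma_\varphi) \leq \dim|\Sigma_\varphi| = r$. Hence $r_{\BN}(\Sigma_\varphi) = r = r_{\ind}(\Sigma_\varphi) - 1$, which means $\Sigma_\varphi$ is itself a tropical linear series of dimension $r$.

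Finally, \Cref{prop:maximality-tls} applied to the inclusion $\Sigma \subseteq \Sigma_\varphi$ of tropical linear series of the same dimension forces $\Sigma = \Sigma_\varphi$, contradicting $\varphi \in \Sigma_\varphi \smallsetminus \Sigma$. The only step that requires care is verifying that finite generation transfers from $\Sigma$ to $\Sigma_\varphi$, but this is immediate from adjoining a single generator; there is no genuine obstacle here, since \Cref{prop:maximality-tls} does all the heavy lifting once we have reduced to a finitely generated enlargement.
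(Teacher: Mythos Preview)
Your proof is correct and follows essentially the same approach as the paper: adjoin a single element to $\Sigma$ to get a finitely generated intermediate module, check that it is a tropical linear series of dimension $r$, and invoke \Cref{prop:maximality-tls}. You are in fact slightly more explicit than the paper in justifying the upper bound $r_{\BN}(\Sigma_\varphi)\le r$ via \Cref{thm:lowerBoundDim}, which the paper leaves implicit.
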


\begin{proof}
Suppose $f \in \Sigma'$. Then $\Sigma'' := \langle \Sigma, f \rangle$ is finitely generated and $r_\BN(\Sigma'') \geq r_\BN(\Sigma) = r$. Also, $r_\ind(\Sigma'') \leq r_{\ind}(\Sigma')$.  Hence, if  $r_\ind(\Sigma') = r + 1$ then $\Sigma''$ is a tropical linear series of dimension $r$ %
and, by \Cref{prop:maximality-tls}, it follows that $f \in \Sigma$.
\end{proof}

We now proceed with a lemma and a proposition leading to the proof of \Cref{thm:local_matroid}. For any set $\cF$ of subsets of a finite set $E$, let $\ell(\cF)$ denote the length of the longest chain of proper inclusions in $\cF$, i.e., if $F_0 \subsetneq F_1 \subsetneq \cdots \subsetneq F_k$ is a chain of maximal length in $\cF$, then $\ell(\cF) = k$.

\begin{lemma}\label{prop:length-upper-bound}
Let $\Sigma \subseteq R(D)$ be a tropical submodule, and let $\cF = \{ F_\varphi : \varphi \in \Sigma \}$. Then \[
\ell(\cF) \leq r_{\ind}(\Sigma) -1.
\]
\end{lemma}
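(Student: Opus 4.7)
The plan is to prove $r_{\ind}(\Sigma) \ge k+1$ by exhibiting $\{\varphi_0, \ldots, \varphi_k\}$ itself as a tropically independent subset of $\Sigma$, after translating each generator so that $\min \varphi_i = 0$ (which does not change any $F_{\varphi_i}$). This will be done by producing an explicit certificate of independence of the form $\theta = \min_i(\varphi_i + a_i)$.

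The setup uses the strict chain: for each $i = 1, \ldots, k$ pick a component $C_i \in F_{\varphi_i} \setminus F_{\varphi_{i-1}}$, and pick $C_0 \in F_{\varphi_0}$ if $F_{\varphi_0} \neq \emptyset$. The structural input I need is \Cref{lem:minimizer}: since every $(\varphi_i)_{\min}$ is a union of points in $\supp(D)$ and closures of components, the restriction of $\varphi_i$ to the interior of any component $C$ is either identically $0$ (when $C \notin F_{\varphi_i}$) or strictly positive (when $C \in F_{\varphi_i}$). Combined with the chain $F_{\varphi_0} \subsetneq \cdots \subsetneq F_{\varphi_k}$, this yields the clean dichotomy that at an interior point $p$ of $C_j$, one has $\varphi_i(p) = 0$ for all $i < j$ and $\varphi_i(p) > 0$ for all $i \ge j$.

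With this dichotomy in hand, I would choose strictly decreasing constants $a_0 > a_1 > \cdots > a_k$ with small gaps, and pick $p_j \in C_j$ (for $j \ge 1$) close to a boundary point of $\overline{C_j}$; the point $p_0$ will be chosen inside $C_0$ (or in $C_1$ in the degenerate case $F_{\varphi_0} = \emptyset$, where $\varphi_0$ is constant). Requiring $\varphi_j + a_j$ to be the uniquely minimal term at $p_j$ reduces, via the dichotomy, to two types of inequality: for $i < j$, $a_i - a_j > \varphi_j(p_j)$, which is easy to arrange once $\varphi_j(p_j)$ is made small; and for $i > j$, $\varphi_i(p_j) > \varphi_j(p_j) + (a_j - a_i)$. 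The $p_0$ condition $\varphi_0(p_0) + a_0 < \varphi_i(p_0) + a_i$ for $i \ge 1$ is handled symmetrically, using $C_0 \in F_{\varphi_0} \subseteq F_{\varphi_i}$ to guarantee $\varphi_i(p_0) > 0$.

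The hard part is the off-diagonal condition $\varphi_i(p_j) > \varphi_j(p_j) + (a_j - a_i)$ for $i > j$. As $p_j$ is pushed toward $\partial \overline{C_j}$ to shrink $\varphi_j(p_j)$, the values $\varphi_i(p_j)$ could also shrink if $\varphi_i$ happens to vanish at the chosen boundary point. The resolution is to pick, for each $j$, a boundary point $q_j \in \partial \overline{C_j} \cap (\varphi_j)_{\min}$ which lies outside $(\varphi_i)_{\min}$ for every $i > j$; then $\varphi_i(q_j) > 0$ is a definite positive constant, so taking $p_j$ near $q_j$ makes $\varphi_j(p_j)$ arbitrarily small while $\varphi_i(p_j)$ stays bounded away from $0$, allowing the gaps $a_j - a_i$ to be tightened accordingly. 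Establishing the existence of such $q_j$, and doing so consistently across all $j$, rests on the combinatorial constraints that \Cref{lem:minimizer} together with the strict chain impose on which component closures meet at each point of $\supp(D)$; this boundary analysis is where I expect the main effort.
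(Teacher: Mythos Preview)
Your approach has a genuine gap that is not closed by the deferred ``boundary analysis.'' The trouble is the choice $C_j \in F_{\varphi_j} \setminus F_{\varphi_{j-1}}$: this forces $\varphi_j > 0$ everywhere on $C_j$, and you then need $\varphi_j(p_j)$ to be small by pushing $p_j$ toward some $q_j \in \partial\overline{C_j}$ with $\varphi_j(q_j)=0$. But such $q_j$ need not exist for \emph{any} admissible $C_j$. On $\Gamma = [0,6]$ with $D$ the sum of the points $1,2,3,4,5$, let $\varphi_0$ be zero outside a tent on $(1,2)$ and let $\varphi_1$ be zero on $[0,1]$ with slope $1$ on $[1,6]$. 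Then $F_{\varphi_0} = \{C^{(2)}\} \subsetneq F_{\varphi_1} = \{C^{(2)},\dots,C^{(6)}\}$, so every $C_1 \in F_{\varphi_1}\setminus F_{\varphi_0}$ is contained in $[2,6]$, where $\varphi_1 \geq 1$. Your scheme then demands $a_0 - a_1 > \varphi_1(p_1) \geq 1$ (the condition at $p_1$) and simultaneously $\varphi_1(p_0) - \varphi_0(p_0) > a_0 - a_1$ (the condition at $p_0 \in C_0 = C^{(2)}$); but $\varphi_1 - \varphi_0 < 1$ everywhere on $C^{(2)}$, so no certificate of this form exists.

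The difficulty is self-inflicted and vanishes with a one-step index shift, which is exactly the paper's argument. Pick $x_j$ inside a component of $F_{\varphi_{j+1}} \setminus F_{\varphi_j}$ for $j<k$, and $x_k$ anywhere in $\varphi_{k,\min}$. Your own dichotomy (via \Cref{lem:minimizer}) then gives $\varphi_i(x_j)=0$ for $i \leq j$ and $\varphi_i(x_j)>0$ for $i>j$, so the diagonal entries of the evaluation matrix $[\varphi_i(x_j)]$ already vanish and there is nothing to shrink. The identity permutation is the unique minimizer in the tropical determinant, the matrix is tropically nonsingular, and $r_{\ind}(\Sigma)\geq k+1$ follows (cf.\ \Cref{prop:tropical-rank-properties-2}). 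No constants $a_i$ and no boundary analysis are needed.
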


\begin{proof}
Let $F_{\varphi_0}\subsetneq \cdots \subsetneq F_{\varphi_k}$ be a chain in $\cF$. We may assume that $\min \varphi_i = 0$ for all $i$. 
For each $i$, pick $x_i\in \varphi_{i,\min} \smallsetminus \varphi_{i+1,\min}$. Consider the tropical matrix
\[\begin{bmatrix}
    \varphi_i(x_j)
\end{bmatrix}_{i,j=0,...,k}.\]
We have $\varphi_i(x_j)=0$ if $i\leq j$ and $\varphi_i(x_j)>0$ if $i>j$. This matrix is tropically nonsingular, because the identity permutation contributes the unique minimal term in the tropical determinant. Thus, $r_{\ind}(\Sigma)$ is at least $k+1$, by \Cref{cor:tropical-rank-properties-3}.
\end{proof}

We will use the following cryptomorphic characterization of the lattice of flats of a matroid. 

\begin{proposition}\label{lem:matroid_lemma}
Let $\cF$ be a set of proper subsets of a finite set $E$ that is closed under intersection and has length $0 < \ell(\cF) \leq r$.  Suppose that every size $r$ subset $S \subseteq E$ is contained in some $F \in \cF$. Then $\cF \cup \{E\}$ is the lattice of flats of a matroid of rank $r+1$ on $E$. 
\end{proposition}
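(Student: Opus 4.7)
The plan is to define the closure operator
\[
\operatorname{cl}(A) := \bigcap \{F \in \cL : A \subseteq F\}, \qquad \cL := \cF \cup \{E\},
\]
and verify that $\operatorname{cl}$ satisfies the Mac Lane--Steinitz exchange axiom; the resulting matroid will have $\cL$ as its lattice of flats and rank equal to the length of $\cL$. Since $\cL$ is closed under intersection, $\operatorname{cl}(A) \in \cL$ and the first three closure axioms ($A \subseteq \operatorname{cl}(A)$, monotonicity, and idempotence) are immediate.

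First I would show $\ell(\cF) = r$, so that $\ell(\cL) = r+1$. Choose $a_1, \ldots, a_r \in E$ greedily with $a_i \notin \operatorname{cl}(\{a_1, \ldots, a_{i-1}\})$: at each stage the chosen set has fewer than $r$ elements, so it extends to a size-$r$ subset contained in some element of $\cF$, forcing $\operatorname{cl}(\{a_1, \ldots, a_{i-1}\})$ to be a proper subset of $E$ and thus allowing the next choice. The resulting chain of closures is strict, lies in $\cF$, and has length $r$. As a byproduct, every subset of $E$ of size at most $r$ has closure strictly contained in $E$.

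Next I would verify exchange by contradiction. Assume $y \in \operatorname{cl}(A \cup \{x\}) \smallsetminus \operatorname{cl}(A)$ but $x \notin \operatorname{cl}(A \cup \{y\})$. Set $F := \operatorname{cl}(A)$ and let $\rho$ be the length of a maximum chain $F_\bot = G_0 \subsetneq \cdots \subsetneq G_\rho = F$ in $\cL$, where $F_\bot := \operatorname{cl}(\emptyset)$. Since consecutive terms of a maximum chain cover each other in $\cL$, I would inductively choose $a_i \in G_i \smallsetminus G_{i-1}$ so that $\operatorname{cl}(\{a_1, \ldots, a_i\}) = G_i$, whence $\operatorname{cl}(\{a_1, \ldots, a_\rho\}) = F$. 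The assumptions force
\[
F \subsetneq \operatorname{cl}(F \cup \{y\}) \subsetneq \operatorname{cl}(F \cup \{x\}),
\]
since $\operatorname{cl}(F \cup \{x, y\}) = \operatorname{cl}(F \cup \{x\})$ contains $x$ while $\operatorname{cl}(F \cup \{y\})$ does not. Greedily extend with elements $b_1, \ldots, b_t$ outside the growing closures, stopping when $\operatorname{cl}(F \cup \{x, b_1, \ldots, b_t\}) = E$. Concatenating gives a strict chain in $\cL$ of length $\rho + 2 + t$, so $\rho + 2 + t \leq \ell(\cL) = r + 1$. But then the set $S := \{a_1, \ldots, a_\rho, x, b_1, \ldots, b_t\}$ has size $\rho + 1 + t \leq r$ and satisfies $\operatorname{cl}(S) = E$, contradicting the byproduct above.

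I expect the main obstacle to be orchestrating the chain in the exchange argument: the extra covering step squeezed between $F$ and $\operatorname{cl}(F \cup \{x\})$ must be balanced against both the length bound $\ell(\cL) \leq r+1$ and the size bound for subsets whose closure lies in $\cF$. Once exchange is verified, the fixed points of $\operatorname{cl}$ form the lattice of flats of a matroid on $E$, these fixed points are precisely $\cL = \cF \cup \{E\}$, and the rank of the matroid equals $\ell(\cL) = r + 1$.
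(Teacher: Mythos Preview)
Your argument is correct. Both your proof and the paper's hinge on the same chain-building technique: generate a flat $F$ by a small set, extend by one element at a time, and exploit the tension between the length bound $\ell(\cF)\leq r$ and the hypothesis that every size-$r$ subset lies in some member of $\cF$. The difference is which cryptomorphism you verify. The paper checks the flat covering (partition) axiom directly: given $F\in\cF$ and $b\notin F$, it shows that the closure of a minimal generating set for $F$ together with $b$ is a cover of $F$, by extending to a maximal chain in $\cF$. You instead verify Steinitz exchange for the closure operator, manufacturing one extra covering step between $F$ and $\operatorname{cl}(F\cup\{x\})$ from a failure of exchange and deriving a chain that is too long. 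Your route also yields $\ell(\cF)=r$ (hence the rank) as a clean preliminary step, which the paper does not isolate; the paper's route has the advantage of directly addressing the flat lattice, so no translation at the end is needed. Either way, the substantive content is the same maximal-chain argument.
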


\begin{proof}
We show that the collection $\cF$ has the partition property: for any $F\in \cF$, the collection
\[
\{G_i \smallsetminus F : G_i\text{ is a minimal subset in $\cF\cup\{E\}$ strictly containing }F\}
\]
partitions $E \smallsetminus F$. Let $G_1 , G_2$ be distinct minimal subsets in $\cF \cup \{ E \}$ that strictly contain $F$.  Since $\cF$ is closed under intersection, by minimality we have $G_1 \cap G_2 = F$.  Hence, it suffices to show that for any $b \in E \smallsetminus F$, there is some $G \in \cF$ that contains $F \cup \{ b \}$ and covers $F$. 

Given a subset $S \subseteq E$, we say that the element $F \in \cF$ is \emph{generated by} $S$ if it is the intersection of all elements of $\cF$ containing $S$.
Let $S$ be a minimal generating set for $F$, and let $k = |S|$.
Choose a sequence $S_1\subseteq S_2\subseteq \cdots \subseteq S_{k}=S$ such that $|S_i|=i$ for each $i$, and let $F_i$ be the element in $\cF$ generated by $S_i$. We must have $F_0\subsetneq F_1\subsetneq\cdots\subsetneq F_k=F$, where $F_0$ is the unique minimal element in $\cF$. Otherwise, if $F_i=F_{i+1}$ for any $i$, then any subset in $\cF$ containing $F_i$ also contains $\{a_{i+1}\}=S_{i+1}\smallsetminus S_i$, meaning $S\smallsetminus\{a_{i+1}\}$ is a smaller generating set for $F$.

In particular, we have $k\leq r$. If $k = r$, then $F\neq E$ and $F$ is a maximal element in $\cF$. The only element of $\cF \cup \{ E \}$ covering $F$ is $E$, and the result follows, so assume that $k < r$. Let $b\in E\smallsetminus F$, and let $G$ be the set generated by $S \cup \{b\}$. Then because $|S \cup \{b\}| \leq r$, it follows from the assumptions that $G \neq E$. By repeating this argument we may choose $b = b_1, \dots, b_{r-k}$, such that if we denote by $G_i$ be the set generated by $S \cup \{b_1, \dots, b_i\}$,
then $b_{i+1} \notin G_i$.
As a result, we get a chain $F_0\subsetneq F_1\subsetneq\cdots\subsetneq F_k=F\subsetneq G_1\subsetneq\cdots\subsetneq G_{r-k}$. Since $\ell(\cF) \leq r$, this chain must be maximal and so $G=G_1$ contains $F\cup\{b\}$ and covers $F$ as required.
\end{proof}

\begin{proof}[\bf{Proof of \Cref{thm:local_matroid}}]
Let $\Sigma \subseteq R(D)$ be a tropical linear series of dimension $r$ that has big minimizers, and let $\cF = \{ F_\varphi : \varphi \in \Sigma \}$. Then $\cF$ is closed under intersection.  By \Cref{prop:length-upper-bound}, it has length at most $r$, and by \Cref{lem:any-r}, any size $r$ subset of $D$ is contained in some element of $\cF$.  Since $\Sigma$ has big minimizers, the set $E$ is not an element of $\cF$, i.e., $\cF$ is a collection of \emph{proper} subsets of $E$.  Hence \Cref{lem:matroid_lemma} says that $\cF \cup \{E \}$ is the lattice of flats of a matroid. Finally, a matroid is loopless if and only if the empty set is a flat. Now $F_\varphi$ is empty if and only if $\varphi$ is constant, and $\Sigma$ contains a constant function if and only if $D \in |\Sigma|$.
\end{proof}

Next, we prove \Cref{thm:local-structure}. The first part of this theorem is a special case of \Cref{thm:local_matroid}. The second part, which is to be proved, says that when $\Sigma \subseteq R(D)$ is a tropical linear series and $D \in |\Sigma|$ is nondegenerate, evaluation of functions at one point in each connected component of $\Gamma \smallsetminus \supp(D)$ embeds a neighborhood of $D \in |\Sigma|$ in $\RR^E/(1, \ldots, 1)$, and extending linearly induces an identification of $\Star(D)$ with the Bergman fan of the local matroid $M_\Sigma$.

\begin{proof}[\bf{Proof of \Cref{thm:local-structure}}] 

Suppose $\Sigma \subseteq R(D)$ is a tropical linear series of dimension $r$ such that $D \in |\Sigma|$ is nondegenerate, and let $E = \{C_1, \ldots, C_s\}$ be the set of connected components of $\Gamma \smallsetminus \supp(D)$.  By \Cref{thm:local_matroid}, $\{ F_\varphi : \varphi \in \Sigma \} \cup \{ E \}$ is the lattice of flats of a matroid $M := M_{\Sigma}$ on $E$.  Let $\{ \varphi_1, \ldots, \varphi_n \} \subseteq \Sigma$ be a generating set.  Then every corank-1 flat $F$ of $M$ is $F_{\varphi_i}$ for some $i$.

Choose $p_i \in C_i$. The evaluation map $\widetilde \Phi \colon \Sigma \to \RR^E$ given by $\varphi \mapsto (\varphi(p_1), \ldots, \varphi(p_n))$ is a homomorphism of tropical modules and descends to a well-defined map $\Phi \colon |\Sigma| \to \RR^E/(1, \ldots, 1)$. By tropical rescaling, we may assume $\min \varphi_i = 0$, for all $i$. We claim that $\Phi$ maps a small neighborhood of $D \in |\Sigma|$ homeomorphically onto a neighborhood of $0 \in \cB(M)$, and the restriction to the preimage of each cone is affine. Extending linearly then gives a homeomorphism $\Star(D) \cong \cB(M)$. It remains to prove the claim. Choose $\epsilon > 0$ sufficiently small, and let $\varphi'_i := \min\{ \varphi_i, \epsilon \}$.  Note that 
\[
\varphi'_i(p_j) = \left\{ \begin{array}{ll}
\epsilon &\text{if } C_j \in F_{\varphi_i} \\
0 &\text{if } C_j \notin F_{\varphi_i}.
\end{array} \right.
\]
Let $\widetilde \cB(M) \subseteq \RR^E$ denote the preimage of $\cB(M)$. It is a tropical submodule in which a neighborhood of $0$ is generated by the indicator vectors $1_F$
of the corank-1 flats $F \subseteq E$. Choose $\varphi'_i$ such that $F = F_{\varphi_i}$.  Then $\widetilde \Phi(\varphi'_i) = \epsilon 1_F$, and the image of $\langle 0, \varphi'_1, \ldots, \varphi'_n \rangle$ is a neighborhood $U$ of $0 \in \cB(M)$.

Next, we show that the restriction of $\Phi$ to a small neighborhood of $D \in |\Sigma|$ is an injection onto $U$.  Choose $\varphi$ within $\epsilon$ of $0$, assume $\min \varphi = 0$, and let $x \in \supp(D)$.  Since $D$ has locally maximal support, the slope of $\varphi$ on any tangent vector $\zeta$ at $x$ is either $0$ or the multiplicity $D(x)$ of $D$.  Moreover, there is at most one tangent vector $\zeta \in T_x (\Gamma)$ such that $s_{\zeta} (\varphi)$ is nonzero.  Because $D$ is a local minimum for the valence 1 degree, there exists at least one tangent vector $\zeta \in T_x (\Gamma)$ such that $s_{\zeta} (\varphi) =0$.
Suppose $\zeta \in C_i$.  If $s_\zeta(\varphi) = 0$, then $\varphi(p_i) = \varphi(x)$.  Otherwise, $\varphi(p_i) - \varphi(x)$ is $D(x)$ times the length of the segment along which $\varphi$ has slope $D(x)$.  In other words, as $C_i$ ranges over all components whose boundary contains $x$, there are at most two values of $\varphi (p_i)$.  The value $\varphi(x)$ is equal to the minimum of these two values $\varphi(p_i)$, and the length of the segment along which $\varphi$ has slope $D(x)$ is determined by the maximum of these two values $\varphi (p_i)$.  In this way, $\varphi$ is determined by its image under $\widetilde \Phi$.  

Descending to projectivizations, we see that $\Phi$ maps a small neighborhood of $D \in |\Sigma|$ homeomorphically onto the neighborhood $U$ of $0 \in \cB(M)$ and, by construction, this map is affine on the preimage of each cone. This proves the claim and the theorem.
\end{proof}

\section{Tropicalizations of canonical linear series}
\label{Sec:Canonical}

Here we apply the results of \Cref{sec:local_structures} to study tropicalizations of canonical linear series, building on the work of M\"oller--Ulirsch--Werner \cite{MUW21}.

Let $\Gamma$ be a metric graph of first Betti number $g$, and let $X$ be a genus $g$ curve over a nonarchimedean field with skeleton $\Gamma$.  Then the canonical linear series $|K_X|$ has dimension $g-1$, and its tropicalization is contained in $|K_\Gamma|$.  A divisor $D \in |K_\Gamma|$ is \emph{realizable} if it is the tropicalization of a divisor in $|K_X|$ for some $X$ with skeleton $\Gamma$.  Little is known about the locus of realizable divisors if one considers curves over nonarchimedean fields with residue characteristic $p$.  See Problem~\ref{prob:canonical}.

A divisor $D \in |K_\Gamma|$ is \emph{realizable in equicharacteristic $0$} if it is the tropicalization of a divisor in $|K_X|$ for some curve $X$ with skeleton $\Gamma$ over a nonarchimedean field of residue characteristic $0$. Let
\[
\Real(|K_\Gamma|) := \{ D \in |K_\Gamma| : D \mbox{ is realizable in equicharacteristic $0$}\}.
\]
Note that $\Real(|K_\Gamma|)$ is a union over all curves with skeleton $\Gamma$ in equicharacteristic zero. Building on the breakthrough work of Bainbridge--Chen--Gendron--Grushevsky--M\"oller on moduli spaces of multiscale holomorphic differentials \cite{BCGGM17}, M\"oller, Ulirsch, and Werner determined $\Real(|K_\Gamma|)$ for all tropical curves \cite{MUW21}. However, they left open the problem of understanding how $\Real(|K_\Gamma|)$ relates to $\Trop(|K_X|)$ for any single curve $X$ with skeleton $\Gamma$.

For the remainder of this section, we focus on the case of equicharacteristic zero and say that a divisor in $|K_\Gamma|$ is realizable if it is realizable in equicharacteristic zero.

The space $\PP\Omega M_g^{\trop}$ of realizable canonical divisors constructed in \cite{MUW21} is a generalized cone complex, in the sense of \cite{acp}, of pure dimension $4g-4$ with a forgetful map to $M_g^{\trop}$, the moduli space of stable tropical curves of genus $g$. The forgetful map is surjective and the fiber over a tropical curve $\Gamma$ is $\Real(|K_\Gamma|)$. By \Cref{thm:lowerBoundDim}, the local dimension of $\Real(|K_\Gamma|)$ at any point is at least $g-1$ at every point.  Since $M_g^{\trop}$ has pure dimension $3g-3$, it follows that there is an open dense subset of $M_g^{\trop}$ over which the fibers have pure dimension $g-1$. However, there are fibers of dimension strictly greater than $g-1$.  See Example~\ref{ex:hyperelliptic-chain} for a tropical curve $\Gamma$ of genus $3$ such that $\Real(|K_\Gamma|)$ has dimension $3$.

The M\"oller--Ulirsch--Werner classification of realizable canonical divisors involves the following notion of inconvenient points.

\begin{definition}
A point $v \in \Gamma$ is \emph{inconvenient} for $\varphi \in R(K_\Gamma)$ if
$s_{\zeta} (\varphi) \neq 0$ for all $\zeta \in T_v (\Gamma)$, and
\[
\max_{\zeta \in T_v (\Gamma)} \{s_{\zeta} (\varphi) \} > -\sum_{\substack{\zeta \in T_v (\Gamma)\\ s_{\zeta} (\varphi) < 0}} s_{\zeta} (\varphi) . 
\]
\end{definition}

Note that an inconvenient point $v$ necessarily has valence $n_v \geq 3$.

\begin{theorem*} \cite[Theorem~1]{MUW21}
\label{thm:MUW}
A divisor $D = K_\Gamma + \ddiv(\varphi)$ in $|K_\Gamma|$ is realizable if and only if 
\begin{enumerate}[label=(\roman*)]
\item \label{it:inconvenient} any $v \in \Gamma$ that is inconvenient for $\varphi$ is contained in a simple cycle $\gamma \subseteq \Gamma$ with $\varphi (x) \leq \varphi(v)$ for all $x \in \gamma$, and 
\item \label{it:horizontal} any tangent vector $\zeta \in T_v(\Gamma)$ such that $s_{\zeta} (\varphi) = 0$ is contained in a simple cycle $\gamma \subseteq \Gamma$ with $\varphi (x) \leq \varphi(v)$ for all $x \in \gamma$.
\end{enumerate}
\end{theorem*}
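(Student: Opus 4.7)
The theorem is \cite[Theorem~1]{MUW21}, so in the paper it is presumably invoked by citation; here I sketch how I would attempt to reprove it from scratch. The statement splits into a necessity direction (realizability implies (i) and (ii)) and a sufficiency direction, and the latter is the deeper of the two.

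For necessity, the plan is to start with a strictly semistable model $\mathcal{X}$ of $X$ whose dual graph refines a model of $\Gamma$ on which $\varphi$ is affine on each edge, and to let $\omega \in H^0(X,K_X)$ be a section tropicalizing to $D = K_\Gamma + \ddiv(\varphi)$. On each component $C_v$ of the special fiber, an appropriate rescaling of $\omega$ by a power of the uniformizer restricts either to zero or to a meromorphic differential whose order at each node corresponds, via a standard dictionary, to the slope $s_\zeta(\varphi)$ together with the relative values of $\varphi$ at adjacent vertices. Degree balance $\deg(K_{C_v}) = 2g_v - 2$ then forces this restriction to vanish whenever $v$ is inconvenient, because the tangent direction of maximal positive slope would force a pole too large to be compensated by the combined zeros and genus contribution on $C_v$. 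Once $\omega|_{C_v}$ is forced to vanish, $v$ cannot lie at the top level of the associated stratification, and following the top level along edges where $\varphi$ decreases produces a simple cycle $\gamma$ through $v$ with $\varphi \le \varphi(v)$ on $\gamma$, giving (i). Condition (ii) follows from the same style of argument applied to horizontal tangent vectors, which correspond to horizontal edges in the multiscale structure.

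For sufficiency, I would reverse-engineer $X$ and $\omega$ using the theory of multiscale differentials of \cite{BCGGM17}. The values of $\varphi$ at the vertices prescribe a level structure on the components of a prospective special fiber, and the slopes $s_\zeta(\varphi)$ prescribe orders at each node. Hypotheses (i) and (ii) are exactly the combinatorial conditions under which these local meromorphic prescriptions on each component can be assembled into a global multiscale differential: (i) is the statement that degree balance $2g_v-2$ is achievable on every component, and (ii) guarantees residue compatibility across horizontal edges. A smoothing theorem of \cite{BCGGM17} then lifts the multiscale differential to an honest differential on a smooth curve over a nonarchimedean field of residue characteristic zero with skeleton $\Gamma$, yielding realizability. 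The main obstacle is precisely this smoothing/realization step; producing an algebraic object from combinatorial input is the essential content of the theorem, and it is exactly what the multiscale differential framework was developed to accomplish.
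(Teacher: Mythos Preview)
You are right that the paper does not prove this theorem: it is quoted verbatim from \cite{MUW21} and used as a black box input to Proposition~\ref{prop:realizable-module} and Theorem~\ref{thm:realizable}. There is therefore nothing in the paper to compare your argument against.

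Your sketch is broadly faithful to the strategy of \cite{MUW21}: the necessity direction comes from analyzing the reduction of a global differential to components of the special fiber and reading off level and residue constraints, and the sufficiency direction builds a multiscale differential from the combinatorial data and invokes the smoothing results of \cite{BCGGM17}. One point to tighten if you ever write this out in full: the step ``following the top level along edges where $\varphi$ decreases produces a simple cycle through $v$'' is not quite the mechanism in \cite{MUW21}; the cycle arises because the top-level subgraph (where the rescaled differential does not vanish) must itself carry a nonzero holomorphic differential, hence has positive genus, and the inconvenient or horizontal vertex is forced to lie in or below such a subgraph. But as a high-level outline your proposal is on target.
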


To prove \Cref{thm:realizable}, we will use the following proposition, which first appeared in Dupraz's Master's thesis \cite[Section~3.4]{DuprazMasters}.

\begin{proposition}
\label{prop:realizable-module}
The subset $\{ \varphi \in R(K_\Gamma) : K_\Gamma + \ddiv(\varphi) \in \Real(|K_\Gamma|) \}$ 
is a polyhedral submodule. %
\end{proposition}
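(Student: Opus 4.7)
The plan is to verify the two properties of a polyhedral submodule: first, that $\Sigma := \{\varphi \in R(K_\Gamma) : K_\Gamma + \ddiv(\varphi) \in \Real(|K_\Gamma|)\}$ is closed under the tropical module operations on $R(K_\Gamma)$; and second, that its projectivization $\Real(|K_\Gamma|)$ is a closed polyhedral subset of $|K_\Gamma|$. Both steps will proceed through the MUW combinatorial characterization recalled just above.

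Closure of $\Sigma$ under tropical scalar addition is immediate, since $\ddiv(\varphi + c) = \ddiv(\varphi)$, the slopes of $\varphi$ are invariant under $\varphi \mapsto \varphi + c$, and the inequalities $\varphi(x) \leq \varphi(v)$ along a cycle are invariant as well. The main content is closure under pointwise minimum. Given $\varphi_1, \varphi_2 \in \Sigma$, I would set $\varphi := \min\{\varphi_1, \varphi_2\}$ and verify MUW conditions (i), (ii) for $\varphi$ at each problematic point $v$, splitting on whether $\varphi_1(v) = \varphi_2(v)$. In the easy case $\varphi_1(v) < \varphi_2(v)$, the function $\varphi$ coincides with $\varphi_1$ in a neighborhood of $v$, so the tangent-slope data at $v$ match those of $\varphi_1$, and the simple cycle $\gamma \ni v$ produced by MUW for $\varphi_1$ satisfies $\varphi \leq \varphi_1 \leq \varphi_1(v) = \varphi(v)$ along $\gamma$.

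The main obstacle is the case $\varphi_1(v) = \varphi_2(v)$, where $s_\zeta(\varphi) = \min\{s_\zeta(\varphi_1), s_\zeta(\varphi_2)\}$ along each tangent direction $\zeta$. I expect the following dichotomy. If $s_{\zeta_0}(\varphi_i) = 0$ for some $i$ and some $\zeta_0$, then $\zeta_0$ is a horizontal tangent for $\varphi_i$; MUW (ii) applied to $\varphi_i$ gives a simple cycle $\gamma \ni v$ with $\varphi_i \leq \varphi_i(v) = \varphi(v)$ on $\gamma$, and $\varphi \leq \varphi_i$ transfers the conclusion. Otherwise both $\varphi_1, \varphi_2$ have all nonzero slopes at $v$; then $\max_\zeta s_\zeta(\varphi_i) \geq \max_\zeta s_\zeta(\varphi)$ and $-\sum_{s_\zeta(\varphi_i) < 0} s_\zeta(\varphi_i) \leq -\sum_{s_\zeta(\varphi) < 0} s_\zeta(\varphi)$ (the latter because every direction negative for $\varphi_i$ is negative for $\varphi$ with smaller or equal slope, while $\varphi$ may acquire extra negative directions from the other $\varphi_j$) force each $\varphi_i$ to be inconvenient at $v$ whenever $\varphi$ is, so MUW (i) for $\varphi_1$ applies. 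A horizontal tangent of $\varphi$ at $v$ forces a horizontal tangent of some $\varphi_i$, reducing condition (ii) for $\varphi$ to the first subcase.

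For polyhedrality, I would refine the canonical polyhedral subdivision of $|K_\Gamma|$ so that within each cell the combinatorial type of $\varphi$ is constant, in particular the sign of every tangent slope, the set of inconvenient vertices, and the set of horizontal tangent directions. Within such a cell the finite collection of candidate simple cycles is also fixed, and each inequality $\varphi(x) \leq \varphi(v)$ along such a cycle is linear in the cell parameters; conditions (i) and (ii) then become a finite Boolean combination of closed linear inequalities, defining a closed polyhedral subset of the cell. Since the subdivision is finite, $\Real(|K_\Gamma|)$ is a closed polyhedral subset of $|K_\Gamma|$. The technical heart of the argument will be the slope bookkeeping in the subcase $\varphi_1(v) = \varphi_2(v)$ of closure under minimum.
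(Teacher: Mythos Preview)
Your proposal is correct. The submodule argument is essentially the paper's, though the paper organizes it more efficiently: rather than splitting on whether $\varphi_1(v) = \varphi_2(v)$, it simply assumes $\varphi_1(v) \leq \varphi_2(v)$ and uses the single observation $s_\zeta(\varphi) \leq s_\zeta(\varphi_1)$ for all $\zeta \in T_v(\Gamma)$, which holds in both the strict and equality cases and immediately yields the chain of inequalities you wrote. One small point to tighten in your treatment of condition~(ii): you need the cycle to contain the \emph{specific} tangent direction $\zeta$ with $s_\zeta(\varphi)=0$, so make explicit that the zero slope of $\varphi_i$ occurs at the same $\zeta$ (which it does, since $s_\zeta(\varphi) = \min\{s_\zeta(\varphi_1), s_\zeta(\varphi_2)\}$ in the equal case and $s_\zeta(\varphi) = s_\zeta(\varphi_1)$ in the strict case); your phrase ``reducing to the first subcase'' obscures this.

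Your polyhedrality argument genuinely differs from the paper's primary proof. The paper invokes elimination of quantifiers in the theory of algebraically closed valued fields together with compactness of the relevant Berkovich space to conclude that $\Real(|K_\Gamma|)$ is a closed definable, hence polyhedral, subset. It then remarks that a direct combinatorial argument along the lines you sketch---refining the cells of $|K_\Gamma|$ so that the slope data, inconvenient vertices, and horizontal tangents are constant, and observing that the MUW conditions become a finite Boolean combination of closed linear inequalities---appears in Dupraz's thesis. Your route is more elementary and self-contained; the paper's is much shorter but imports heavier machinery.
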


\begin{proof}
First, we claim that $\Real(|K_\Gamma|)$ is polyhedral. We give a short proof using Berkovich theory and elimination of quantifiers. For a combinatorial (but longer) proof, see \cite[Section~3.4]{DuprazMasters}. We begin by noting that the locus of curves with skeleton $\Gamma$ is a definable semialgebraic subset of the moduli space of curves $M_g$. Therefore, the locus $S_\Gamma$ of pairs $(X,D_X)$ such that $X$ has skeleton $\Gamma$ and $D_X \in |K_X|$ is a definable semialgebraic subset of $\PP\Omega M_g$. By elimination of quantifiers in the first order theory of algebraically closed valued fields \cite{Robinson56}, the image $\Real(|K_\Gamma|)$ of this definable set under the definable tropicalization map to $\Sym^{2g-2}(\Gamma)$ is definable, i.e., a finite Boolean combination of polyhedra. Finally, we note that the Berkovich analytifcation $S_\Gamma^\an$ is compact, because it is the preimage of a compact set (the space of curves with skeleton $\Gamma$) under a proper morphism. Thus its image $\Real(|K_\Gamma|)$ is compact and hence closed, i.e. $\Real(|K_\Gamma|)$ is a closed polyhedral subset of $\Sym^{2g-2}(\Gamma)$, as required.

It remains to show that $R = \{ \varphi \in R(K_\Gamma) : K_\Gamma + \ddiv(\varphi) \in \Real(|K_\Gamma|) \}$ is a tropical submodule.  It is clearly closed under scalar addition.  We must show that if $\varphi_1$ and $\varphi_2$ are in $R$, then $\varphi := \min\{ \varphi_1, \varphi_2 \}$ is in $R$.  In other words, we assume that $\varphi_1$ and $\varphi_2$ satisfy conditions \ref{it:inconvenient} and \ref{it:horizontal} and show that $\varphi$ satisfies these conditions as well.

First, suppose $v \in \Gamma$ is inconvenient for $\varphi$.  Without loss of generality we may assume that $\varphi_1 (v) \leq \varphi_2 (v)$.
Note that $s_{\zeta} (\varphi) \leq s_{\zeta} (\varphi_1)$ for all $\zeta \in T_v (\Gamma)$, hence
\[
\max_{\zeta \in T_v(\Gamma)}\{s_\zeta(\varphi_1)\} \geq
\max_{\zeta \in T_v(\Gamma)}\{s_\zeta(\varphi)\} > - \sum_{\substack{\zeta \in T_v (\Gamma) \\ s_{\zeta} (\varphi) < 0}} s_{\zeta} (\varphi) \geq - \sum_{\substack{\zeta \in T_v (\Gamma) \\ s_{\zeta} (\varphi_1) < 0}} s_{\zeta} (\varphi_1) . 
\]
It follows that either there is a tangent vector $\eta \in T_v (\Gamma)$ such that $s_{\eta}(\varphi_1)=0$, or $v$ is inconvenient for $\varphi_1$.  In each case, there is a simple cycle $\gamma \subseteq \Gamma$ containing $v$ such that $\varphi(x) \leq \varphi_1 (x) \leq \varphi_1 (v) = \varphi(v)$ for all $x \in \gamma$. Thus $\varphi$ satsifies condition \ref{it:inconvenient}.

Second, suppose $s_{\zeta} (\varphi) = 0$ for some tangent vector $\zeta \in T_v(\Gamma)$.  Without loss of generality, we may assume that $\varphi_1 (v) \leq \varphi_2 (v)$ and $s_{\zeta} (\varphi_1) = 0$.  Then there is a simple cycle $\gamma \subseteq \Gamma$ containing $\zeta$ such that $$\varphi(x) \leq \varphi_1 (x) \leq \varphi_1 (v) = \varphi(v),$$ for all $x \in \gamma$.  Thus $\varphi$ satisfies condition \ref{it:horizontal}, as required.
\end{proof}

Next, we prove \Cref{thm:realizable}, which says that if $X$ is a curve of genus $g$ with skeleton $\Gamma$ in equicharacteristic zero, then $\Trop(|K_X|) = \mathrm{Real}(|K_\Gamma|)$ if and only if $\mathrm{Real}(|K_\Gamma|)$ has dimension $g-1$.

\begin{remark}
By \Cref{thm:realizable}, if $\Real(|K_\Gamma|)$ has dimension $g -1$, then it is finitely generated as a tropical module.  However, we do not know whether $\Real(|K_\Gamma|)$ is finitely generated in general.
\end{remark}  %

\begin{proof}[{\bf Proof of \Cref{thm:realizable}}]
If $\Real(|K_\Gamma|) = \Trop(|K_X|)$ for some curve $X$ then it has dimension $g-1$, because $\Trop(|K_X|)$ is a tropical linear series of dimension $g-1$. We now prove the converse.

Suppose the polyhedral submodule $\Real(|K_\Gamma|) \subseteq R(K_\Gamma)$ has dimension $g-1$, and let $X$ be any curve with skeleton $\Gamma$ over a nonarchimedean field of  equicharacteristic zero. Then $\Real(|K_\Gamma|)$ has independence rank $g$, by \Cref{cor:tropical-rank-properties-3}, and $\Trop(|K_X|) \subseteq\Real(|K_\Gamma|)$ is a tropical linear series of dimension $g -1$. By \Cref{cor:strong-maximality-tls},  $\Trop(|K_X|) = \Real(|K_\Gamma|)$, as required. 
\end{proof}

We now give an example showing that $\Real(|K_\Gamma|)$ can have dimension strictly greater than $g-1$.

\begin{example}
\label{ex:hyperelliptic-chain}
Let $\Gamma$ be a hyperelliptic chain of 3 loops, as pictured in Figure~\ref{fig:hyperelliptic-chain}. The condition for $\Gamma$ to be hyperelliptic is that the top and bottom edge of the middle loop have equal length.  We show that $|\mathrm{Real}(K_{\Gamma})|$ has dimension at least 3. 
Let $\sigma$ be the face of $|K_{\Gamma}|$ consisting of those divisors that are supported on the third loop $\gamma_3$. Note that this face is of dimension 3 by \cite[Proposition~13]{HMY12}. %
We claim that all the divisors in $\sigma$ are realizable.

Let $D = K_\Gamma + \ddiv(\varphi) \in \sigma$.  All such $\varphi$ have the same restriction to the first two loops and bridges, as shown in Figure~\ref{fig:hyperelliptic-chain-functions}.  Since no point of valence 2 is inconvenient, we see that the only inconvenient point is $v_2$.  But the restriction of $\varphi$ to the simple cycle $\gamma_2$ achieves its maximum at $v_2$, so $D$ satisfies \ref{it:inconvenient}.  The function $\varphi$ is constant on the first loop $\gamma_1$.  If $\zeta$ is a tangent vector outside of $\gamma_1$ with $s_\zeta(\varphi) = 0$, then $\zeta$ is contained in $\gamma_3$.  In this case $\varphi$ achieves it maximum at $\zeta$, so $D$ satisfies \ref{it:horizontal}.

\begin{figure}[ht]
    
        \begin{tikzpicture}[scale=1.5]
            \draw (-0.5, 0) circle (0.5);
            \draw (0,0) -- (1,0);
            \draw (1.5, 0) circle (0.5);
            \draw (2,0) -- (3,0);
            \draw (3.5, 0) circle (0.5);
            \node[vertex, label={60:$v_1$}] at (0, 0) {};
            \node[vertex, label={120:$w_2$}] at (1, 0) {};
            \node[vertex, label={60:$v_2$}] at (2, 0) {};
            \node[vertex, label={120:$w_3$}] at (3, 0) {};
        \end{tikzpicture}
    \caption{A chain of 3 loops, such that the top and bottom edges of the middle loop have the same length.}
    \label{fig:hyperelliptic-chain}
\end{figure}
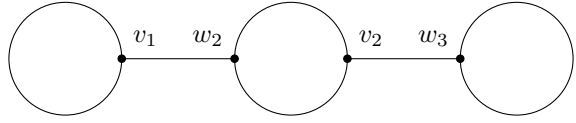

\begin{figure}[ht]
    
        \begin{tikzpicture}[scale=1.5]
            \draw (-0.5, 0) circle (0.5);
            \draw (0,0) -- (1,0);
            \draw (1.5, 0) circle (0.5);
            \draw (2,0) -- (3,0);
            \draw[dashed] (3.5, 0) circle (0.5);
            \node[label={90:$1$}] at (0.5, 0) {};
            \node[label={90:$3$}] at (2.5, 0) {};
            \node[label={90:$1$}] at (1.5, 0.5) {};
            \node[label={270:$1$}] at (1.5, -0.5) {};
            \node[label={$0$}] at (-0.5, 0) {};

        \end{tikzpicture}
    \caption{Slopes of the function $\varphi$, for any $D = K_\Gamma + \ddiv(\varphi) \in \sigma$, oriented left to right, on the first two loops and bridges.}
    \label{fig:hyperelliptic-chain-functions}
\end{figure}
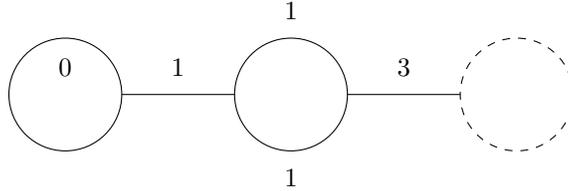

\end{example}

\section{Local matroids of matroidal linear series} \label{sec:local-matroidal}

In this section, we relate the local matroids of matroidal linear series to the initial matroids of a parametrizing valuated matroid, up to simplification and passage to submatroids.  Recall that the simplification of a matroid is the matroid obtained by deleting loops and identifying parallel elements. Let $\Sigma$ be a matroidal linear series with a parametrization $\cV \twoheadrightarrow \Sigma$.   We will show that the simplification of each local matroid of $\Sigma$ is isomorphic to a submatroid of an initial matroid of $\cV$.  This gives a local parametrization of $|\Sigma|$ near a nondegenerate divisor $D$ by the Bergman fan of an initial matroid of $\cV$. It follows, in particular, that if one of the local matroids of a tropical linear series at a nondegenerate divisor is not realizable, then the tropical linear series is not realizable. 

In addition, we show that every matroid occurs as the local matroid of a matroidal linear series at a nondegenerate divisor, and we use recent results in tropical linear incidence geometry, from \cite{Wang24}, to produce examples of matroidal linear series that are not strongly recursive.

\subsection{Local matroids and initial matroids} 
Let $\cV$ be a valuated matroid of rank $r$ on a finite set $E$.  For $w \in \cV \cap \RR^E$, the \emph{initial matroid} of $\cV$ at $w$, denoted $\cV^w$, is a loopless matroid on $E$ with flats
\[
\{ \supp(w'- w) : w' \in \cV \mbox{ and } (w' - w) \in \RR_{\geq 0}^E \}.
\]
Here, $\supp$ means the set of coordinates that are nonzero. Let $\overline w$ be the image of $w$ in the projectivization $|\cV|$. Then $\Star(\overline w)$ is isomorphic to the Bergman fan $\cB(\cV^w)$. See \cite[Definition~4.2.7]{MaclaganSturmfels}. %

To state our main result about local matroids and initial matroids, we also need the notion of a submatroid. Let $M$ be a matroid on $E$. For each subset $E' \subseteq E$ that contains a basis, the corresponding \emph{submatroid} $M'$ is the  matroid on $E'$ defined in any of the following equivalent ways.
\begin{itemize}
    \item The independent sets of $M'$ are the independent sets of $M$ contained in $E'$.
        \item The flats of $M'$ are the intersections of $E'$ with flats of $M$. 
    \item The covectors of $M'$ are the covectors of $M$ with the coordinates labeled by $E\smallsetminus E'$ deleted. 
\end{itemize}
See \cite[Proposition 7.3.1]{white1986theory} for further details on submatroids. By the characterization of submatroids in terms of covectors, the projection $\RR^E \to \RR^{E'}$ induces a surjection $\cB(M) \twoheadrightarrow \cB(M')$.  

Let $\overline M$ denote the simplification of a matroid $M$, obtained by deleting loops and identifying parallel elements. Bergman fans are defined for any loopless matroid. A loopless matroid $M$ and its simplification have isomorphic lattices of flats, so their Bergman fans are isomorphic $\cB(\overline M) \cong \cB(M)$.

\begin{theorem} \label{thm:local-is-initial}
Let $\Sigma \subseteq R(D)$ be a matroidal linear series with parametrization $\pi \colon \cV \to \Sigma$. 
\begin{enumerate}
    \item There is a unique continuous section $\sigma$ of $\pi$ over $\{ \varphi : D + \ddiv(\varphi) \mbox{ is nondegenerate} \}$.
    \item 
    Setting $\Sigma_\varphi := \{ \varphi' : \varphi + \varphi' \in \Sigma \}$, 
    the simplified local matroid  $\overline M_{\Sigma_\varphi}$ is isomorphic to a submatroid of  $\cV^{\sigma(\varphi)}$. 
\end{enumerate}
    
\end{theorem}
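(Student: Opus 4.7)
The approach is to first realize the parametrization concretely, then use nondegeneracy to pin down a canonical preimage, and finally match flats between the local matroid and an initial matroid. Writing $\cV \subseteq \overline{\RR}^{E_\cV}$, the tropical module homomorphism $\pi$ has the form $\pi(w) = \min_{e \in E_\cV}\{w_e + \psi_e\}$ for some $\psi_e \in \PL(\Gamma) \cup \{\infty\}$. Any preimage $w' \in \cV \cap \pi^{-1}(\varphi)$ satisfies $w'_e + \psi_e(p) \geq \varphi(p)$ for all $p \in \Gamma$, forcing $w'_e \geq \sup_{p}\{\varphi(p) - \psi_e(p)\}$. Setting $\tilde\sigma(\varphi)_e$ equal to this supremum gives the coordinatewise minimum of $\pi^{-1}(\varphi)$ in $\overline{\RR}^{E_\cV}$, and a direct calculation shows $\pi(\tilde\sigma(\varphi)) = \varphi$.

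The first main step for Part (1) is to show that $\tilde\sigma(\varphi) \in \cV$ when $\varphi$ is nondegenerate. I would produce, for each $e \in E_\cV$, an element $w^{(e)} \in \cV \cap \pi^{-1}(\varphi)$ whose $e$-th coordinate equals $\tilde\sigma(\varphi)_e$, and then observe that $\min_e w^{(e)}$ lies in $\cV$ (valuated matroids are closed under finite coordinatewise minima) and coincides with $\tilde\sigma(\varphi)$. The existence of $w^{(e)}$ will use \Cref{Prop:NoIsolatedPoints}: the minimizer of $w^{(e)}_{e} + \psi_e - \varphi$ must contain a whole component of $\Gamma \smallsetminus \supp(D + \ddiv(\varphi))$, giving enough room inside the fiber to slide the $e$-th coordinate down to $\tilde\sigma(\varphi)_e$ while remaining in $\cV$. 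Continuity of $\sigma := \tilde\sigma$ on the nondegenerate locus then follows from continuity of the defining suprema in the $\|\cdot\|_\infty$ topology, and uniqueness because any continuous section must agree with the coordinatewise minimum on that locus.

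For Part (2), write $w := \sigma(\varphi)$ and $D' := D + \ddiv(\varphi)$, and let $\mathcal{C}$ denote the set of connected components of $\Gamma \smallsetminus \supp(D')$. The plan is to define a map $\iota \colon \mathcal{C} \to E_\cV$ by sending a component $C$ to any $e$ such that $w_e + \psi_e$ achieves the minimum $\varphi$ on $C$. Such an $e$ exists because $\pi(w) = \varphi$, and any two valid choices on the same component are parallel in $\cV^w$, so $\iota$ becomes well-defined after simplification of $\cV^w$. The image $\iota(\mathcal{C}) \subseteq E_\cV$ contains a basis of $\cV^w$ by a dimension count: the rank of $\cV^w$ equals $r + 1$, which is also the rank of $M_{\Sigma_\varphi}$ by \Cref{thm:local_matroid}. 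The core combinatorial check is then that for each $\varphi' \in \Sigma_\varphi$ with lift $w + w' \in \cV$ and $w' \geq 0$, the flat $F_{\varphi'}$ of $M_{\Sigma_\varphi}$ corresponds via $\iota$ to $\supp(w') \cap \iota(\mathcal{C})$; this identifies the flats of $\overline{M}_{\Sigma_\varphi}$ with the flats of $\cV^w$ restricted to $\iota(\mathcal{C})$, yielding the claimed submatroid embedding.

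The main obstacle I anticipate is showing that $\tilde\sigma(\varphi)$ lands in $\cV$ rather than merely in $\overline{\RR}^{E_\cV}$. The coordinatewise minimum of preimages is a natural candidate, but nothing guarantees \emph{a priori} that it lies in $\cV$; the argument must convert the bounds imposed by nondegeneracy into equalities witnessed by actual valuated covectors before invoking closure under coordinatewise minima. A secondary difficulty is controlling the map $\iota$: parallel components of $M_{\Sigma_\varphi}$ must correspond exactly to parallel elements of $\cV^w$, and this matching between the combinatorial definition of parallelism on the metric-graph side and the valuated-matroidal notion on the $\cV^w$ side must be verified carefully so that passing to the simplification produces the advertised submatroid embedding.
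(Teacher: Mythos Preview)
Your approach to Part~(1) has a genuine gap: the candidate section $\tilde\sigma(\varphi)_e = \sup_p\{\varphi(p) - \psi_e(p)\}$ need not land in $\cV$, even when $D + \ddiv(\varphi)$ is nondegenerate. The problem is ``redundant'' coordinates $e \in E_\cV$ for which $\psi_e$ never uniquely achieves the minimum in any tropical combination. For such $e$, the value $\tilde\sigma(\varphi)_e$ can be strictly smaller than the $e$-th coordinate of every element of $\cV \cap \pi^{-1}(\varphi)$, so no witness $w^{(e)}$ exists and your argument for membership in $\cV$ fails. Concretely, take $\Gamma = [0,1]$ with $D$ the midpoint, parametrize $\Sigma = R(D)$ by $\cV = \Trop(U_{2,3})$ via $\psi_0, \psi_1$ the two extremals of $R(D)$ and $\psi_2$ a large constant. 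The midpoint divisor is nondegenerate, yet your formula gives a vector whose third coordinate is far below the other two, so the minimum is achieved only once and the vector is not a covector of $U_{2,3}$. In this example the fiber in $\cV$ over the constant function is a ray, and the correct section value is the unique limit of $\pi^{-1}$ from nearby points where the fiber is a singleton; this limit is \emph{not} the coordinatewise infimum of the fiber.

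The paper's proof proceeds quite differently. Rather than writing down a closed-form section, it first establishes Part~(2) at a fixed nondegenerate $\varphi$: one lifts each corank~$1$ flat of the local matroid $M_{\Sigma_\varphi}$ to an element of $\cV$, takes their coordinatewise minimum to obtain $w \in \cV$ with $\pi(w) = \varphi$, and then builds a meet-preserving surjection $\cL(\cV^w) \to \cL(M_{\Sigma_\varphi})$ and invokes a lattice-theoretic lemma (a meet-preserving surjection between geometric lattices of equal rank exhibits the target as a submatroid of the source after simplification). Only afterward are uniqueness and continuity of the section deduced, from the fact that $\pi$ is a homeomorphism over an open dense subset of the nondegenerate locus; the value $\sigma(\varphi)$ is forced as the limit of this inverse, not as a coordinatewise infimum. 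Your Part~(2) sketch, matching components to ground-set elements via a map $\iota$, is also different from the paper's lattice argument, and the parallelism bookkeeping you flag as a difficulty is exactly what the lattice lemma sidesteps.
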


\noindent In particular, if $D \in |\Sigma|$ is nondegenerate, then there is a natural surjective coordinate projection  $\cB(\cV^{\sigma(0)}) \twoheadrightarrow \cB(M_{\Sigma})$ inducing a local parametrization $\Star(\sigma(0)) \twoheadrightarrow \Star(D)$.

Before proving \Cref{thm:local-is-initial}, we give an application, showing that the local matroids of realizable tropical linear series are realizable.

\begin{proof}[\bf Proof of \Cref{thm:local-realizable}]
    Suppose $\Sigma$ is the tropicalization of a linear series.  By \Cref{thm:realizablematroidal}, there is a parametrization by a realizable matroid $\cV \twoheadrightarrow \Sigma$. Then every initial matroid $\cV^w$ is realizable. Moreover, any submatroid of a realizable matroid is realizable. Thus, by \Cref{thm:local-is-initial}, for each nondegenerate $D \in |\Sigma|$ the simplified local matroid $\overline M_{\Sigma}$ is realizable. The theorem follows, since a matroid is realizable if and only if its simplification is realizable.
\end{proof}

To prove the existence part of \Cref{thm:local-is-initial}, we use the following lemma about lattices of flats. %
Given two flats $F$ and $G$ of a matroid $M$, their join $F\vee G$ is the smallest flat $\cL(M)$ containing $F$ and $G$, and their meet $F\wedge G$ is the intersection $F\cap G$. A join-subsemilattice of $\cL(M)$ is a subset $\cL'\subseteq \cL(M)$ such that if $F,G\in \cL'$, then $F\vee G\in \cL'$.

\begin{lemma}\label{lem:surjection_of_lattices}
	Suppose $M_1$ and $M_2$ are matroids of the same rank. If $\tau\colon \cL(M_1) \to \cL(M_2)$ is meet-preserving and surjective, then $\overline M_2$ is isomorphic to a submatroid of $M_1$. 
\end{lemma}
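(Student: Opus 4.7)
The plan is to build an injective, join-preserving, rank-preserving order-embedding $\sigma\colon\cL(M_2)\hookrightarrow\cL(M_1)$ that is a right-inverse to $\tau$, and then to use it to pick out a subset $E'\subseteq E(M_1)$ with $M_1|_{E'}\cong\overline{M_2}$.

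Define $\sigma(G):=\bigwedge\tau^{-1}(G)$. Since $\tau$ preserves meets and is surjective, $\sigma(G)$ lies in $\tau^{-1}(G)$, so $\tau\circ\sigma=\mathrm{id}_{\cL(M_2)}$ and $\sigma$ is injective. From $\tau\circ\sigma=\mathrm{id}$ together with the meet-preservation of $\tau$, one verifies that $\sigma$ is the left adjoint of $\tau$, i.e.\ $\tau(F)\geq G$ iff $F\geq\sigma(G)$. Consequently $\sigma$ preserves joins and is an order-embedding. It also sends $\hat 0$ to $\hat 0$: since $\tau$ is order-preserving and surjective, $\tau(\hat 0_{M_1})$ is a lower bound for all of $\cL(M_2)$ and so equals $\hat 0_{M_2}$.

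The decisive use of the equal-rank hypothesis is to upgrade $\sigma$ to a rank-preserving map. Every maximal chain in $\cL(M_2)$ has length $r=\rk(M_1)=\rk(M_2)$; its image under the order-embedding $\sigma$ is a chain of length $r$ in $\cL(M_1)$, which must therefore itself be maximal. Hence $\sigma$ sends covering relations to covering relations and preserves rank. In particular, $\sigma$ sends atoms of $\cL(M_2)$ to atoms of $\cL(M_1)$, that is, rank-$1$ flats (parallel classes of non-loops in $M_2$) to parallel classes of $M_1$.

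For each atom $a$ of $\cL(M_2)$ choose $e_a\in\sigma(a)\subseteq E(M_1)$, and set $E':=\{e_a\}$. By injectivity of $\sigma$ on atoms, distinct $e_a$ lie in distinct parallel classes of $M_1$, so $M_1|_{E'}$ is simple; and $E'$ contains a basis of $M_1$ since $\rk(M_1|_{E'})=r=\rk(M_1)$ by rank-preservation. Identifying $E(\overline{M_2})$ with $E'$ via $a\leftrightarrow e_a$, the two rank functions coincide: for a subset $S$ of atoms of $\cL(M_2)$, the rank of $S$ in $\overline{M_2}$ is the rank of $\bigvee S$ in $\cL(M_2)$, which by rank- and join-preservation of $\sigma$ equals the rank of $\bigvee_{a\in S}\sigma(a)$ in $\cL(M_1)$, namely the rank of $\{e_a:a\in S\}$ in $M_1|_{E'}$. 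The main obstacle is establishing the rank-preservation of $\sigma$ from the equal-rank hypothesis; once that is in hand, everything else is routine lattice- and matroid-theoretic bookkeeping.
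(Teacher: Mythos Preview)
Your proof is correct and follows essentially the same approach as the paper. The paper defines the section as $\tau^*(x)=\bigwedge\{y:\tau(y)\geq x\}$ rather than your $\sigma(G)=\bigwedge\tau^{-1}(G)$, but under surjectivity of $\tau$ these coincide; both then use the equal-rank hypothesis via maximal chains to get rank preservation, and the paper closes by citing \cite[Proposition~7.3.1]{white1986theory} for the submatroid identification that you instead spell out directly by comparing rank functions.
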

\begin{proof}%
Given a meet-preserving map $\tau\colon \cL(M_1) \to \cL(M_2)$, the map
\[\tau^*\colon \cL(M_2)\to \cL(M_1),\quad x\mapsto \wedge\{y\in \cL(M_1) : \tau(y)\geq x\}\]
is join-preserving, and it is injective if $\tau$ is surjective. See, e.g., the introduction to \cite{higgs1968}. 
Since $M_1$ and $M_2$ have the same rank, $\tau^*$ sends a maximal chain in $\cL(M_2)$ to a maximal chain in $\cL(M_1)$. In particular, rank 1 flats of $M_2$ are sent to rank 1 flats of $M_1$. This means $\cL(M_2)$ embeds in $\cL(M_1)$ as a join-subsemilattice. The conclusion follows from \cite[Proposition 7.3.1]{white1986theory}.
\end{proof}

\begin{proof}[\bf{Proof of \Cref{thm:local-is-initial}}] 
Suppose $\varphi \in \Sigma$. Then $\Sigma_\varphi := \{ \varphi' : \varphi' + \varphi \in \Sigma \}$ is a matroidal linear series contained in $R(D + \ddiv(\varphi))$. We claim that there is some $w \in \cV$ such that  $\overline M_{\Sigma_\varphi}$ is isomorphic to a submatroid of the initial matroid  $\cV^{w}$. It suffices to consider the case where $\varphi = 0$ and $\Sigma_\varphi = \Sigma$.

  Let $H_1,\ldots,H_k$ be the corank 1 flats of $M_{\Sigma}$. For each $H_i$, choose $\varphi_i\in \Sigma$ such that $H_i=F_{\varphi_i}$ and  $\min \varphi_i=0$. Choose $ w_i\in \cV$ such that $\pi(w_i)=\varphi_i$. Recall that $\cV$ is the space of covectors of a valuated matroid and hence is a tropical submodule of $\Rbar^{N}$ for some $N$. Therefore, applying coordinatewise minimum (the addition operation on the tropical submodule), $\cV$ contains
\begin{equation} \label{eq:defw}
w:=\min\{ w_1,\ldots,  w_k\}.
\end{equation}
Note that $\pi(w)=\min\{\pi(w_1),\ldots, \pi(w_k)\}=\min\{\varphi_1,\ldots,\varphi_k\}=0$. The last equality holds because $M_{\Sigma}$ is loopless, and the intersection of all corank 1 flats of a loopless matroid is empty.
We will now show that $\overline M_{\Sigma}$ is isomorphic to a submatroid of the initial matroid $\cV^w$. By \Cref{lem:surjection_of_lattices}, it suffices to produce a meet-preserving surjective map of lattices $\cL(\cV^w) \to \cL(M_{\Sigma})$.

By translating $\cV$, we may assume that $w=0$. Note that $\{w_1, \ldots, w_k\} \subseteq \Rbar_{\geq 0}^N$, by \eqref{eq:defw}, and  $\supp(w_i)$ is a flat of $\cV^w$.  We construct a map $\td{f}\colon \cL(\cV^w)\to \cL(M_{\Sigma})$ as follows. Pick $\epsilon>0$ sufficiently small. Then, for each flat $G \in \cL(\cV^w)$, we have $\epsilon 1_{G}$ in $\cV$. Let $\varphi_G := \pi(\epsilon 1_G)$.  Because $\pi(0) = 0$ and $\pi$ is a homomorphism of tropical modules, we have $\varphi_G \geq 0$.  We define
\[
\td{\pi}(G) := \left\{ \begin{array}{ll} F_{\varphi_G}  & \mbox{ if } \min \varphi_G = 0, \\
E & \mbox{ otherwise.} \end{array} \right.
\]
By construction, $\td{\pi}$ is meet-preserving %
and $\td{\pi}(\supp(w_i)) = H_i$.  Since the corank 1 flats generate $\cL(M_{\Sigma})$ with respect to meet, it follows that $\td{\pi}$ is surjective.  This proves the claim.

By the claim proved above, there is a set-theoretic section $\sigma$ of $\pi$ over the open set $$ U := \{ \varphi : D + \ddiv(\varphi) \mbox{ is nondegenerate} \}$$ such that the local matroid associated to $\Sigma_\varphi$ is isomorphic to a submatroid of $\cV^{\sigma(\varphi)}$ for all $\varphi \in U$. 
To prove the theorem, it remains to show that this section is unique and continuous.

Note that there is an open dense subset $U' \subseteq U$ over which $\pi$ is a homeomorphism. This is because $\pi$ is a piecewise linear map between polyhedral spaces of the same dimension with connected fibers. The section $\sigma$ necessarily agrees with $\pi^{-1}$ on $U'$. Hence, if $\sigma$ is continuous then it is unique.

It remains to show that $\sigma$ is continuous. %
Let $w = \sigma(\varphi)$. By construction, $\pi$ maps a small neighborhood $W$ of $w$ onto a small neighborhood $U''$ of $\varphi$. Since $U'$ is dense, the intersection $U' \cap U''$ is nonempty.  Thus, $\sigma(U') \cap W$ is nonempty. Since this holds for arbitrarily small neighborhoods $W$, it follows that $w$ is in the closure of $\sigma(U')$ and is the limit of $\sigma(\varphi_i)$ for any sequence $\{ \varphi_i \}$ in $U'$ converging to $\varphi$. This shows that $\sigma$ is continuous, as required.%
\end{proof}

\subsection{Every matroid is a local matroid}

Here, we study the local matroids of tropical and matroidal linear series on an interval. In particular, we prove \Cref{thm:every-matroid}, showing that every loopless matroid appears as the local matroid of a tropical linear series at a nondegenerate divisor on an interval and also on a loop. Moreover, these tropical linear series can be chosen to be matroidal.

Let $[v,w]$ be a metric graph consisting of a single edge with endpoints $v$ and $w$.  Let $D = dv$, let $x$ denote the linear function with slope $1$ that takes the value $0$ at $v$, and let $$\Phi \colon \Rbar^{d+1} \to R(D)$$ be the surjective tropical linear map given by 
$
(a_0, \ldots, a_d) \stackrel{\Phi}{\mapsto} \min\{ a_0, x + a_1, 2x + a_2, \ldots, dx + a_d \}.
$ 
Since $\Rbar^{d+1}$ is a valuated matroid of rank $d+1$ and $R(D)$ has Baker--Norine rank $d$, $R(D)$ is a matroidal linear series of dimension $d$.

\begin{theorem}\label{thm:intervalProof}
Let $\cV \subseteq \Rbar^{d+1}$ be a valuated matroid of rank $r+1$.  Then $\Phi(\cV) \subseteq R(D)$ is a matroidal linear series of dimension $r$. 
\end{theorem}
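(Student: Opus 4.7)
The plan is to verify the three conditions in \Cref{def:MLS} for $\Phi(\cV)$ to be a matroidal linear series of dimension $r$: namely, that $\Phi(\cV) \subseteq R(D)$ is a tropical linear series with $r_{\BN}(\Phi(\cV)) = r$, and that it is the image of a valuated matroid of rank $r+1$ under a tropical module homomorphism. The third condition is immediate, since $\Phi$ is by construction a tropical module homomorphism and $\cV$ has rank $r+1$. Since any valuated matroid is finitely generated as a tropical module (one generator per covector of minimal support), $\Phi(\cV)$ is finitely generated, and by \Cref{prop:fin-gen-definable} its projectivization $|\Phi(\cV)|$ is a polyhedral subset of $\Sym^d([v,w])$. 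In particular, the inequality $r_{\ind} \geq r_{\BN} + 1$ from the remark following \Cref{def:tropical-linear-series} is available.

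Next I would establish the upper bound $r_{\ind}(\Phi(\cV)) \leq r+1$. Since $|\cV|$ is a tropical linear space of projective dimension $r$, \Cref{prop:tropical-rank-properties-1} gives $r_{\ind}(\cV) = r+1$, so any $r+2$ elements of $\cV$ are tropically dependent. It remains to check that $\Phi$ sends dependences to dependences: if $\min_i\{w_i + a_i\}$ is a tropical dependence in $\cV$, then at each $x \in [v,w]$, $\min_i\{\Phi(w_i)+a_i\}(x) = \min_{i,k}\{w_{i,k}+a_i+kx\}$, and fixing an index $k^*$ achieving this minimum the dependence in $\cV$ at coordinate $k^*$ supplies a second value of $i$ realizing the same minimum. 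Hence any $r+2$ elements of $\Phi(\cV)$ are tropically dependent, giving $r_{\ind}(\Phi(\cV)) \leq r+1$.

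The main step is the lower bound $r_{\BN}(\Phi(\cV)) \geq r$. Given any effective $D' = \sum_i n_i q_i$ of degree $r$ on $[v,w]$, I would construct $w \in \cV$ with $D + \ddiv(\Phi(w)) \geq D'$. Since $\Phi(w)(x) = \min_k\{w_k + kx\}$ is concave and piecewise linear, its divisor is determined by the lower convex hull of the points $(k, w_k) \in \RR^2$: each edge of slope $-q$ and width $n$ contributes a chip of multiplicity $n$ at $x=q$. The construction amounts to prescribing hull-vertex positions $k_0 < k_1 < \cdots < k_s$ with $k_i - k_{i-1} \geq n_i$ and heights giving edges of the required slopes $-q_i$; since $\cV$ has rank $r+1$, the interpolation property of valuated matroids supplies a $w \in \cV$ realizing the prescribed values on such indices (by extending to a basis and using tropical Pl\"ucker relations), and one verifies that the remaining coordinates lie above the hull so the prescribed vertices are genuine hull vertices.

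Combining Steps, $r_{\BN}(\Phi(\cV)) \geq r$, $r_{\ind}(\Phi(\cV)) \leq r+1$, and $r_{\ind} \geq r_{\BN}+1$ force $r_{\BN}(\Phi(\cV)) = r$ and $r_{\ind}(\Phi(\cV)) = r+1$, so $\Phi(\cV)$ is a tropical linear series of dimension $r$, matroidal by construction. The main obstacle is the witness construction in the $r_{\BN}$ lower bound: the natural choice of hull-vertex indices may fail to be independent in the underlying matroid of $\cV$, so some care is required (via the exchange axiom or induction on the number of distinct points of $D'$) to shift vertices within their flats while preserving the convex-hull pattern.
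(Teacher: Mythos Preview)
Your overall strategy matches the paper's: the crux is the lower bound $r_{\BN}(\Phi(\cV)) \geq r$, since the upper bound $r_{\ind}(\Phi(\cV)) \leq r+1$ follows from $\Phi$ being a tropical module homomorphism out of a rank $r+1$ valuated matroid (your explicit check that $\Phi$ preserves dependences is fine and more detailed than what the paper writes).

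However, your witness construction for the Baker--Norine lower bound has a genuine gap, and it is more serious than the one you flag. Even after you have chosen hull indices $k_0 < \cdots < k_s$ forming a basis $B$ of the underlying matroid and prescribed the values $w_{k_i}$, the lift to $\cV$ comes with no control over the remaining coordinates: nothing forces $w_k$ for $k \notin B$ to lie above your intended hull. Concretely, take $d = 3$, $r = 1$, and $\cV = \{(\alpha, \beta, \alpha - 10, \beta) : \alpha, \beta \in \Rbar\}$, a realizable rank~$2$ valuated matroid. Choosing the basis $\{0,1\}$ with $w_0 = 0$, $w_1 = -q$ forces $w_2 = -10$, and for $q < 5$ the point $(2,-10)$ sits strictly below the segment from $(0,0)$ to $(1,-q)$; the resulting $\Phi(w)$ has its breakpoint at $x=5$, not at $x=q$. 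A different basis (here $\{1,2\}$) does happen to work, but your proposal supplies no mechanism for selecting it, and ``shifting within flats'' does not obviously preserve the hull pattern you need.

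The paper bypasses this entirely. After a compactness reduction to $D' = p_1 + \cdots + p_r$ with distinct interior $p_i$, it observes that the condition $D + \ddiv(\Phi(a)) - p_i \geq 0$ is precisely membership of $a$ in the tropical hyperplane $H_i \subseteq \Rbar^{d+1}$ with apex $(0,x_i,2x_i,\ldots,dx_i)$. Since $\cV$ has rank $r+1$, the stable intersection $\cV \cap_{\textnormal{st}} H_1 \cap_{\textnormal{st}} \cdots \cap_{\textnormal{st}} H_r$ is nonempty, and any point in it is the required witness. This replaces your hand-built convex hull with a one-line appeal to stable intersection and is what makes the argument work uniformly in $\cV$.
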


\begin{proof}
It suffices to show that $\Phi (\cV)$ has Baker-Norine rank at least $r$. The set $|\Phi(\cV)|$ is compact and hence the set 
\[
\{D'\in\operatorname{Sym}^r[v,w]: D -D' + \ddiv(\varphi) \geq 0\text{ for some } \varphi \in \Phi(\cV) \}
\]
is closed. Therefore, it suffices to check the Baker--Norine rank condition for effective divisors of the form $D'=p_1+\cdots+p_r$ where $p_i$ are mutually distinct and $p_i\in(v,w)$ for each $i$. Let $x_i$ be the distance from $v$ to $p_i$ and let $a = (a_0, \ldots, a_d)$ be a point in $\Rbar^{d+1}$. Then  $D + \ddiv(\Phi(a)) -p_i \geq 0$ if and only if $\min\{a_0, x_i + a_1, 2x_i + a_2, \ldots, dx_i + a_d \}$
is attained at least twice. We rephrase this geometrically as follows. Let $H_i \subseteq\Rbar^{d+1}$ be the tropical hyperplane given as the bend locus of the tropical linear form
\[\min\{b_0, x_i + b_1,2x_i + b_2,..., dx_i + b_d\}.
\]
Then $D + \ddiv(\Phi(a)) -p_i \geq 0$ if and only if $a \in H_{i}$.
 
 Since $\cV$ has rank $r+1$, the stable intersection $\cV \cap_{\textnormal{st}} H_{1} \cap_{\textnormal{st}} \cdots \cap_{\textnormal{st}} H_{r}$ is nonempty. A point $a$ in this stable intersection has  $D + \ddiv(\Phi(a)) - p_1 - \cdots - p_r \geq 0$. It follows that $r_{\BN}(\Phi(\cV)) \geq r$, and hence $\Phi(\cV)$ is a matroidal linear series of dimension $r$. 
\end{proof}

The map $\Phi \colon \cV \to \Phi(\cV)$ is a parametrization. Note that $|D|$ is a $d$-simplex whose interior parametrizes nondegenerate divisors. \Cref{fig:interval} illustrates the parametrizing map $\Phi\colon \Rbar^{d+1}\to R(D)$ when $d = 2$. In this case, the complete linear series $|D|$ is a 2-simplex with vertices $2v,v+w$ and $2w$. The domain is partitioned into seven regions. The regions $R_1$, $R_2$, and $R_3$ map to the vertices of $|D|$, and the regions $R_{12}$, $R_{13}$, and $R_{23}$ map to the edges. The projectivization of $R_0$ maps isomorphically to the interior of $|D|$. %
See \cite{applet-tls} for an applet visualizing tropical linear series of degree 2 on an interval.

\begin{figure}
    \centering
    \includegraphics[width=0.5\linewidth]{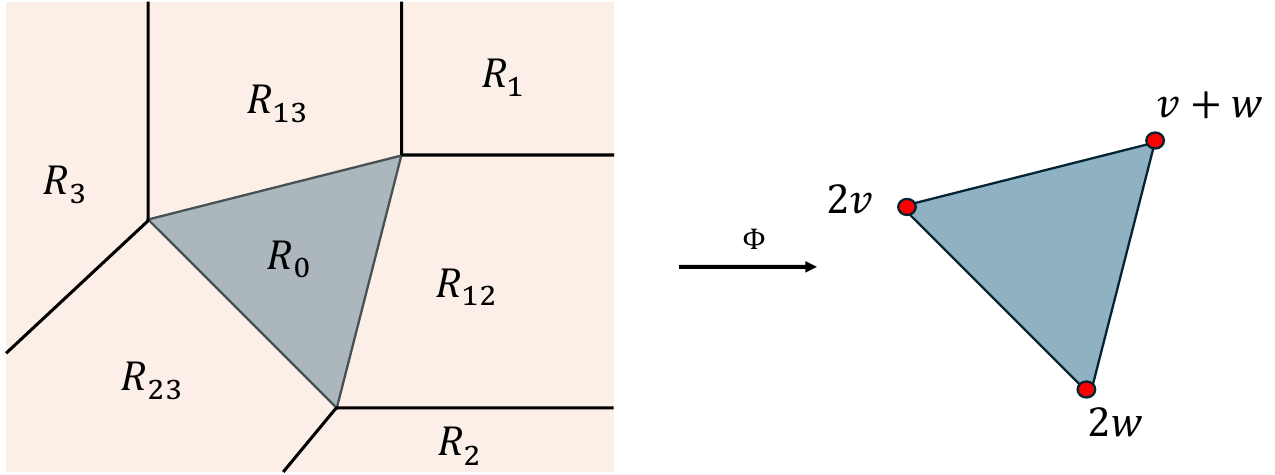}
    \caption{The parametrizing map $\Phi\colon \Rbar^3\to R(2v)$. %
    }
    \label{fig:interval}
\end{figure}

\begin{example}\label{ex:local-matroid-interval} \Cref{fig:local-matroids} illustrates two parametrizations of a matroidal linear series of dimension~1. The first parametrization $\Phi'$ surjects the uniform matroid $U_{2,3}$ onto $\Sigma$. The point $(0,0,0)$ is sent to the interior of $|D|$, and the local matroid of $\Sigma$ at $f(0,0,0)$ is  $U_{2,3}$, which agrees with the initial matroid of $U_{2,3}$ at $(0,0,0)$. Now consider the map $f$ which surjects the uniform matroid $U_{2,4}$ onto $U_{2,3}$ by forgetting the first coordinate. Precomposing $\Phi'$ with the map $f$, we get another parametrization by the matroid $U_{2,4}$, where the local matroid at $\Phi'(f(0,0,0,0))$ is a submatroid of the initial matroid of $U_{2,4}$ at $(0,0,0,0)$.
\end{example}

\begin{figure}
    \centering
\includegraphics[width=0.9\linewidth]{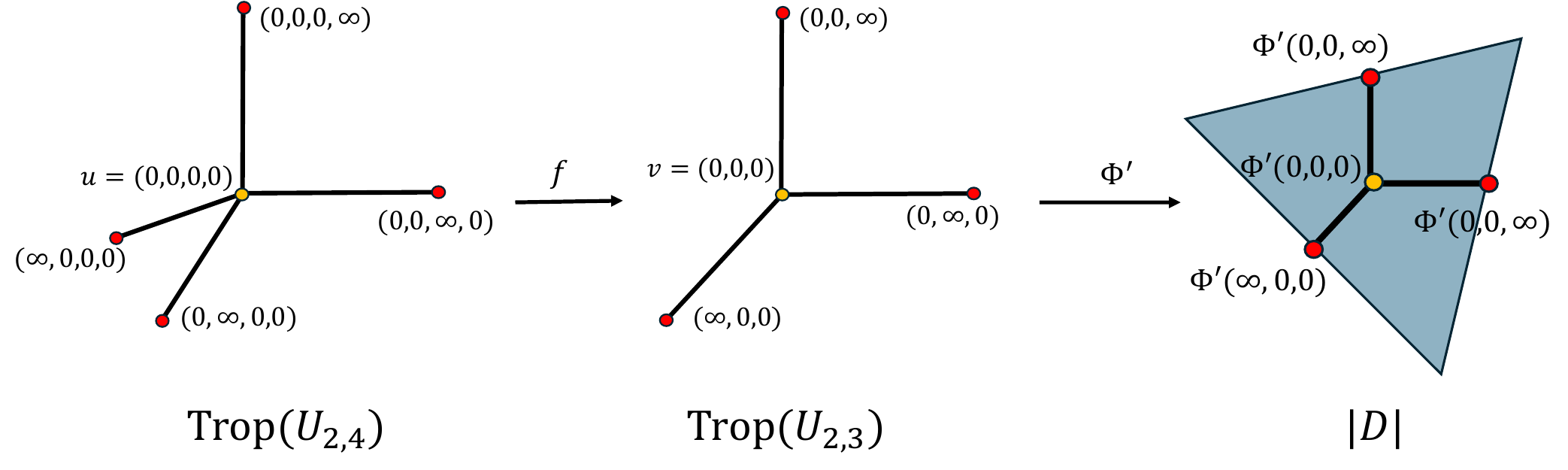}
    \caption{Matroidal linear series of degree 2 on an interval.}
    \label{fig:local-matroids}
\end{figure}

We now proceed to show \Cref{thm:every-matroid}, which states that every loopless matroid is isomorphic to the local matroid of a tropical linear series at a non-degenerate divisor.

\begin{proof}[\bf Proof of \Cref{thm:every-matroid}] 
Let $M$ be a loopless matroid on $d+1$ elements, and let $\Phi'$ be a translation of $\Phi$ such that $D'=D + \ddiv(\Phi'(0))$ is nondegenerate.  It suffices to show that the local matroid of $\Phi'\big(\Trop(M)\big)$ at $D'$ is $M$.  A small neighborhood of 0 in $\Trop(M)$ is mapped isomorphically to a small neighborhood of $\Phi'(0)$. By \Cref{thm:local-structure}, $M$ is the local matroid of $\Phi'\big(\Trop(M)\big)$ at $\Phi'(0)$.
\end{proof}

The arguments above also apply to the case where the graph is a loop.  Let $\Gamma$ be a loop.  Any divisor of degree $d$ on $\Gamma$ is equivalent to $dv$ for some $v \in \Gamma$. Let $D := dv$. Then the extremals of $|D|$ are of the form $dv'$ where $d$ times the distance from $v$ to $v'$ is an integer multiple of the length of the loop. This gives rise to a surjection $\Phi \colon \Rbar^d \twoheadrightarrow R(D)$.

\begin{theorem}\label{Thm:loop}
Let $\Gamma$ be a loop and let $D$ be an divisor of degree $d\geq 0$ on $\Gamma$.  Let $\Phi \colon \Rbar^d \to R(D)$ be the tropical linear map induced by the minimal generating set of $R(D)$. Then for any valuated matroid $\cV \subseteq \Rbar^d$ of rank $r+1$, $\Phi(\cV)$ is a matroidal linear series of dimension $r$.  
\end{theorem}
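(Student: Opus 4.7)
The plan is to mirror the argument of \Cref{thm:intervalProof}, replacing the interval with the loop and $\Rbar^{d+1}$ with $\Rbar^{d}$. Since $\Gamma$ has genus $1$, Riemann--Roch gives $r_{\BN}(R(D)) = d-1$, and the minimal generating set of $R(D)$ (unique up to tropical scaling by \cite[Corollary~9]{HMY12}) has exactly $d$ elements $\psi_0, \dots, \psi_{d-1}$. The generator $\psi_j$ may be chosen so that $\ddiv(\psi_j) = d\cdot v_j - D$, where $v_0, \dots, v_{d-1}$ are the $d$ equally spaced points on $\Gamma$ satisfying $d\cdot v_j \sim D$. A tropical linear combination $\Phi(a) = \min_j\{ \psi_j + a_j\}$ then generates $R(D)$, and after replacing $D$ by $D + \ddiv(\Phi(0))$ we may assume $D$ is any chosen divisor of degree $d$ on $\Gamma$.

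First I would establish the easy direction: any surjective homomorphism of tropical modules can only decrease independence rank, so $r_{\ind}(\Phi(\cV)) \leq r_{\ind}(\cV) = r+1$. It therefore remains to show $r_{\BN}(\Phi(\cV)) \geq r$, for then \Cref{def:tropical-linear-series} combined with \Cref{prop:tropical-rank-properties-2} forces $r_{\ind}(\Phi(\cV)) = r+1$, and the matroidal structure is witnessed by the parametrization $\cV \twoheadrightarrow \Phi(\cV)$.

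Next I would reduce the Baker--Norine condition to generic effective divisors $D' = p_1 + \cdots + p_r$ with distinct $p_i \in \Gamma \smallsetminus \supp(D)$, using that $|\Phi(\cV)|$ is compact and hence the set
\[
\{D' \in \Sym^r(\Gamma) : D - D' + \ddiv(\varphi) \geq 0 \text{ for some } \varphi \in \Phi(\cV)\}
\]
is closed, exactly as in the proof of \Cref{thm:intervalProof}. For such $p_i$, the condition $D - p_i + \ddiv(\Phi(a)) \geq 0$ is equivalent to the tropical polynomial $\min_j\{\psi_j(p_i) + b_j\}$ achieving its minimum at least twice at $b = a$; this cuts out a tropical hyperplane $H_i \subseteq \Rbar^d$.

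Finally I would invoke stable intersection in $\Rbar^d$: since $|\cV|$ has pure dimension $r$, the stable intersection $\cV \cap_{\mathrm{st}} H_1 \cap_{\mathrm{st}} \cdots \cap_{\mathrm{st}} H_r$ has expected dimension $0$ and is nonempty. Any point $a$ in this stable intersection lies in $\cV$ and in all the $H_i$, so $\Phi(a) \in \Phi(\cV)$ witnesses that $D - D' + \ddiv(\Phi(a)) \geq 0$.

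The main obstacle is not the rank-count, which is formal, but rather the verification that ``minimum attained twice at $p_i$'' really encodes $\mathrm{ord}_{p_i}(\Phi(a)) \geq 1$. This requires identifying a local coordinate on $\Gamma$ near $p_i$ (which is possible since $p_i$ lies in the open complement of $\supp(D)$, where $\Gamma$ is locally a line segment) and observing that each $\psi_j$ is affine with integer slope in this coordinate, so a bend of the tropical minimum at $p_i$ contributes a point of multiplicity $\geq 1$ to $\ddiv(\Phi(a))$ — the same verification that underpins \Cref{thm:intervalProof}.
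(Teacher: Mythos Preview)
Your proposal is correct and follows essentially the same route as the paper's proof, which likewise reduces to the interval argument via compactness, tropical hyperplanes $H_{p_i}$, and stable intersection. One small refinement: the genericity condition you want is $p_i \notin \{v_0,\dots,v_{d-1}\}$ (the extremal points where some $\psi_j$ fails to be affine), not merely $p_i \notin \supp(D)$---this is exactly the condition the paper states, and it is what your final-paragraph verification that ``each $\psi_j$ is affine'' actually requires.
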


\begin{proof}
The proof is almost identical to that of \Cref{thm:intervalProof}. In this case, $r_{\BN}(R(D)) = d - 1$. Moreover, for any point $p$ such that $d$ times the distance from $p$ to $v$ is not an integer multiple of the length of the loop, the set of points $a \in \Rbar^d$ such that $D - p + \ddiv(\Phi(a)) \geq 0$ is a hyperplane $H_p$. Then, a stable intersection argument shows that $r_{\BN}(\Phi(\cV)) = r$, and the theorem follows.
\end{proof}

\subsection{Incidence geometry in tropical linear series} Here, we illustrate surprising phenomena for incidence geometry of tropical linear subseries of a tropical linear series, drawing upon recent results in tropical linear incidence geometry from \cite{Wang24}. Recall that the tropical linear subspaces of $\RR^{E}/(1,\ldots,1)$ are the projectivizations of valuated matroids $\cV$ on $E$. In particular, the tropical linear spaces that are fans are the Bergman fans of matroids.

\begin{itemize}
    \item If a matroid $M$ of rank $r+1$ does not have the Levi intersection property, then there is a set of $r$ points in the tropical linear space $\Trop(M)$ that is not contained in any codimension 1 tropical linear subspace. The V\'amos matroid $V_8$ is of this type. See \Cref{fig:vamos}.
    \item There are matroids $M$ such that $\Trop(M)$ contains two codimension 1 tropical linear subspaces $H_1$ and $H_2$ such that $H_1 \cap H_2$ does not contain any codimension 2 tropical linear subspace. The matroid $V_8^-$, a relaxation of $V_8$, is of this type. See \Cref{fig:vamos-relaxation}.
\end{itemize}

Our first example is a tropical linear series that is not strongly recursive. We follow the common practice of specifying a matroid of rank $r$ on $E$ by listing its non-spanning circuits, i.e., the minimal dependent sets that are contained in a proper flat $F \subsetneq E$. The remaining circuits are the sets of size $r +1$ that do not contain a non-spanning circuit.

\begin{example}\label{ex:vamos-series}
    The V\'amos matroid $V_8$,  whose affine diagram is given in \Cref{fig:vamos}, is the matroid on $E=\{0,\ldots,7\}$ whose non-spanning circuits are:
    $$\{0,1,2,3\},\{0,3,4,5\},\{1,2,4,5\},\{0,3,6,7\},\{1,2,6,7\}.$$
    \begin{figure}
        \centering
        \includegraphics[width=0.25\linewidth]{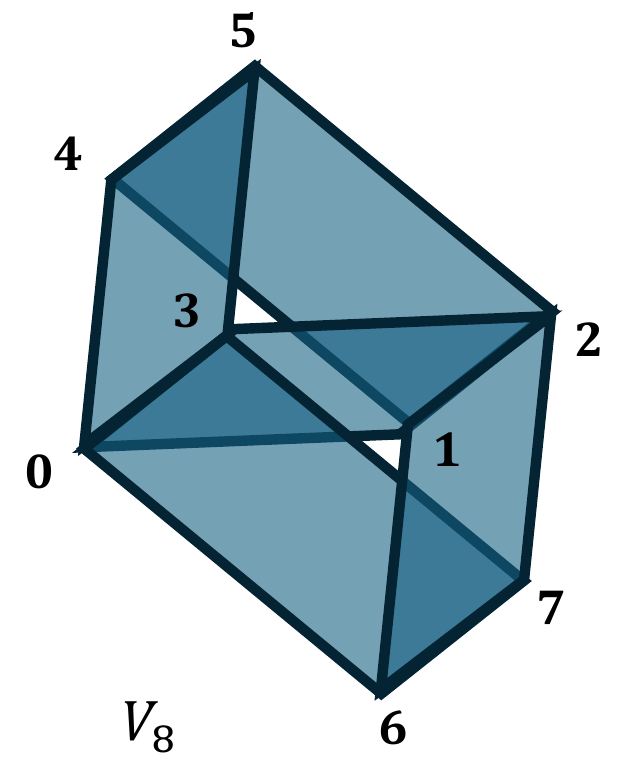}
        \caption{The affine diagram depicting the non-spanning circuits of the Vámos matroid $V_8$.}
        \label{fig:vamos}
    \end{figure}
    Choose 7 distinct points $p_1<p_2<\cdots <p_7$ in $(v,w)$. %
    Let $D = p_1+p_2+\cdots +p_7$. Then $D$ is nondegenerate. Suppose $D=7v + \ddiv \varphi$ where $\varphi=\min\{a_0,x+a_1,\ldots,7x+a_7\} \in R(7v)$.
    Define the tropical linear map $\Phi' \colon \Rbar^{E}\to R(7v)$ by 
    \[
    \Phi' (b_0 , \ldots , b_7 ) = \min\{ a_0+b_0,x+ a_1 + b_1,\ldots,7x + a_7 + b_7\} .
    \]
    In other words, $\Phi'$ is a translation of $\Phi$ by $(a_0 , \ldots , a_7)$. By \Cref{thm:intervalProof}, $\Phi'(\Trop (V_8))$ is a matroidal linear series of dimension three, and the local matroid $M_{\Phi'(Trop(V_8))} = V_8$.
    
    Equivalently, using the isomorphism $R(7v) \cong R(D)$, we regard $\Phi'$ as a map $\Rbar^{E}\to R(D)$, sending $0$ to the constant function $0$. For the three flats $F_1=\{0,1,2,3\}$, $F_2=\{1,2,4,5\}$, and $F_3=\{0,3,4,5\}$ of $V_8$, let $f_i=\delta_{F_i}$ where 
     \[\delta_{F_i}(j)=\begin{cases}
        \infty, & j\in F_i \\
        0,& \text{ else}.
    \end{cases}\]
    Put 
    $D_i = 7v + \Phi'(f_i)$ for $1\leq i\leq 3$. We show that there cannot be a tropical linear series $\Sigma \subseteq \Phi' (\Trop (V_8))$ of dimension 2 such that $D_1 , D_2 , D_3 \in |\Sigma|$. Let $\Sigma$ by any tropical submodule containing $f_1,f_2$ and $f_3$. We have $\min\{f_1,f_2,f_3\}=0\in \Sigma$, so $D\in |\Sigma|$. Note that $F_{\Phi'(f_i)}=F_i$, meaning that $F_1,F_2$ and $F_3$ are all flats of $M_{\Sigma}$. By \cite[Theorem~6.4]{Wang24}, the matroid $V_8$ does not have a quotient of rank 3 that contains these three flats. We conclude by \Cref{cor:incidence-local-matroid}, that $\Sigma$ cannot be a tropical linear series of dimension 2.
\end{example}

Our second example is a tropical linear series with two codimension 1 tropical linear subseries whose intersection does not contain any tropical linear series of codimension 2.  We use the following criterion for a tropical submodule $\Sigma\subseteq R(D)$ to have big minimizers. %

\begin{lemma} \label{prop:dagger-criterion}
   If $D$ is multiplicity free and $\supp (D)$ does not contain any vertex of valence 1, then any tropical submodule $\Sigma\subseteq R(D)$ has big minimizers.
\end{lemma}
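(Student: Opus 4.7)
The plan is to argue by contradiction, using the structural description of the minimizer from \Cref{lem:minimizer} together with the slope-integrality of functions in $\PL(\Gamma)$.

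First I would take an arbitrary $\varphi \in \Sigma$ and suppose for contradiction that $\varphi_{\min}$ contains no component $C_i \in E$. By \Cref{lem:minimizer}, $\varphi_{\min}$ is a union of points in $\supp(D)$ and closures of components in $E$. Since, by assumption, no full closure $\overline{C_i}$ can appear, $\varphi_{\min}$ must reduce to a finite subset of $\supp(D)$. In particular, every point of $\varphi_{\min}$ is isolated in $\varphi_{\min}$.

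Next I would fix such an isolated point $x \in \varphi_{\min} \subseteq \supp(D)$ and analyze slopes. Because $x$ is isolated in the minimizer, for every tangent direction $\zeta \in T_x(\Gamma)$ the function $\varphi$ strictly increases along an initial segment in direction $\zeta$; since slopes are integers, this forces $s_\zeta(\varphi) \geq 1$ for each such $\zeta$. Summing, $\ord_x(\varphi) = -\sum_{\zeta \in T_x(\Gamma)} s_\zeta(\varphi) \leq -n_x$. Combining with the effectiveness condition $D(x) + \ord_x(\varphi) \geq 0$ (which comes from $\varphi \in R(D)$) yields $D(x) \geq n_x$.

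Finally, I would invoke the two hypotheses: since $x \in \supp(D)$ is not a vertex of valence one, we have $n_x \geq 2$, so $D(x) \geq 2$. This contradicts the assumption that $D$ is multiplicity free (so $D(x) \in \{0,1\}$ for all $x$), completing the proof. There is no real obstacle here; the only subtle point is making sure that the isolated-point case of \Cref{lem:minimizer} is handled correctly, which is precisely why the valence-one hypothesis is needed — without it, a leaf $x$ with $D(x) = 1$ and a single outgoing slope equal to $1$ would permit a minimizer consisting only of $\{x\}$.
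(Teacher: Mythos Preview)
Your proof is correct and follows essentially the same approach as the paper: both reduce to showing that $\varphi_{\min}$ cannot have an isolated point $x\in\supp(D)$, since the outgoing slopes at such a point would each be at least $1$, forcing $D(x)\geq n_x\geq 2$ and contradicting multiplicity-freeness. Your version is simply more explicit about the slope-integrality and the inequality $D(x)\geq n_x$, which the paper compresses into a single sentence.
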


\begin{proof}
   Let $\varphi\in R(D)$. We will show that $\varphi_{\min}$ contains no isolated points. By \Cref{lem:minimizer}, $\varphi_{\min}$ is a union of points in $\supp (D)$ and closures of connected components of $\Gamma \smallsetminus \supp (D)$. If $x\in \varphi_{\min}$, then all the outgoing slopes of $\varphi$ at $x$ are nonnegative. Since $D$ is multiplicity-free and $\supp(D)$ does not have vertex of valence 1, this means $\varphi_{\min}$ does not contain isolated points, and the lemma follows.
\end{proof}

\begin{example}\label{ex:vamos-relaxiation-series}
   Consider the interval $[v,w]$, with $D = p_1 + \cdots + p_7$ and $\Phi' \colon\Rbar^{8}\to R(D)$, as in \Cref{ex:vamos-series}. Let $V_8^-$ be the relaxation of $V_8$ obtained by omitting the non-spanning circuit $\{1,2,6,7\}$. Let $Q_1$ and $Q_2$ be the two elementary quotients of $V_8^-$ whose affine diagrams are given in \Cref{fig:vamos-relaxation}.  Let $\Sigma = \Phi(\Trop(V_8^-))$ and $\Sigma_i=\Phi'(\Trop(Q_i))$ for $i=1,2$.  Then $\Sigma_1$ and $\Sigma_2$ are tropical linear subseries of codimension 1 in $\Sigma$. We claim that $\Sigma_1 \cap \Sigma_2$ does not contain  any tropical linear subseries of codimension 2. %
   
 The divisor $D$ is multiplicity-free and its support does not contain any vertex of valence 1. Let $\Sigma'$ be a tropical linear series contained in $\Sigma_1\cap \Sigma_2$. By \Cref{prop:dagger-criterion}, the local matroid $M_{ \Sigma'}$ is well-defined. 
 The lattice of flats of $M_{\Sigma'}$ is contained in the intersection of the lattices of flats of $M_{\Sigma_1}$ and $M_{\Sigma_2}$.
 As in the proof of \Cref{thm:every-matroid}, the local matroids are $M_{\Sigma_1}=Q_1$ and $M_{\Sigma_2}= Q_2$. \Cref{cor:incidence-local-matroid} says that the local matroid $M_{\Sigma'}$ is a common quotient of $Q_1$ and $Q_2$. However, by \cite[Example~6.14]{Wang24}, there is no corank 2 quotient of $V_8^-$ which is a common quotient of $Q_1$ and $Q_2$. Therefore, $\Sigma'$ cannot have codimension 2. 
\end{example}

\begin{remark} \label{rmk:DnotinSigma}
In Example~\ref{ex:vamos-relaxiation-series}, it is crucial that we can define the local matroid $M_{\Sigma'}$ and prove its basic properties, such as being a quotient of $M_{\Sigma}$, when $D$ is not necessarily contained in $|\Sigma'|$.
\end{remark}

\begin{figure}
    \centering
    \includegraphics[width=0.5\linewidth]{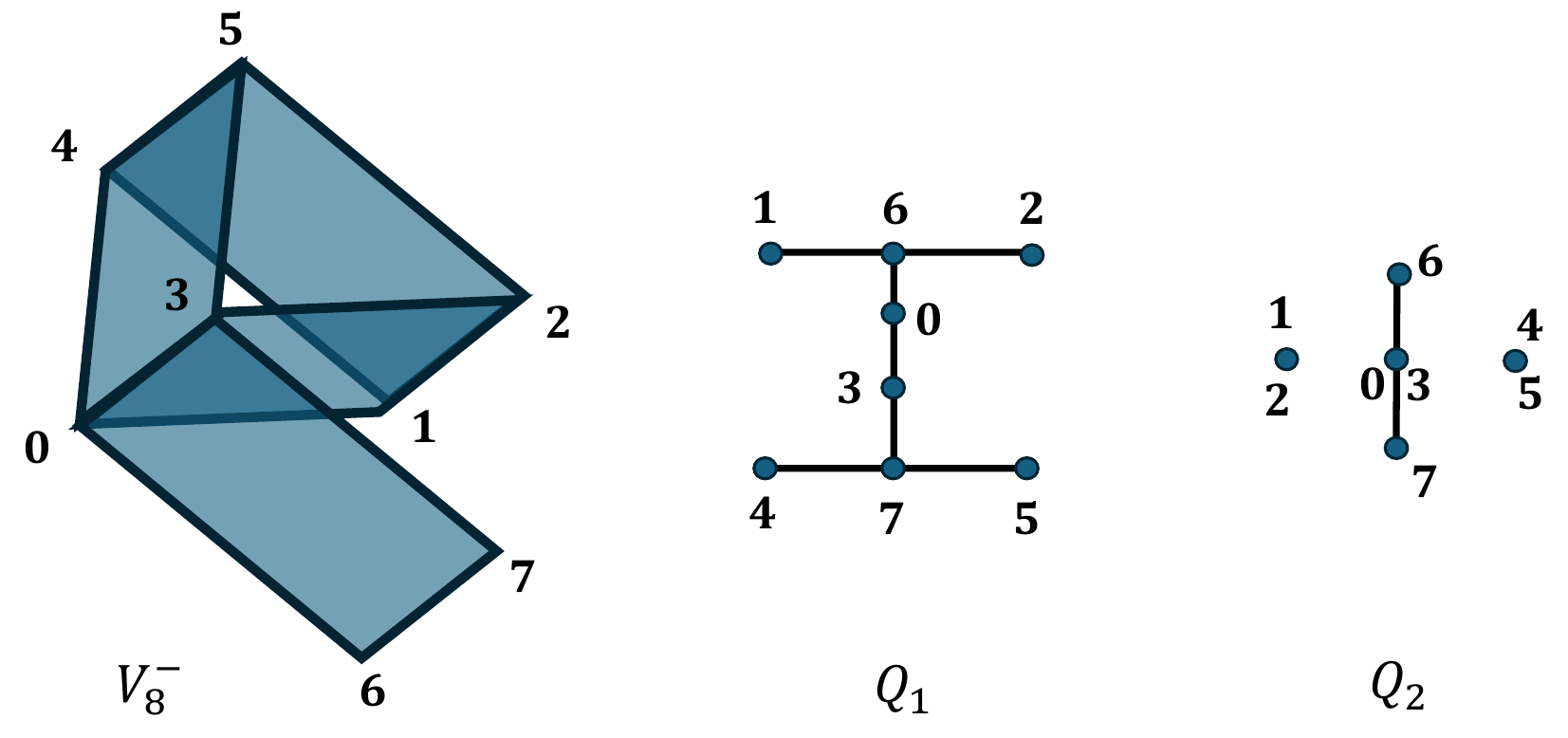}
    \caption{The affine diagrams depicting the non-spanning circuits of $V_8^-$ and its elementary quotients $Q_1$ and $Q_2$.} %
    \label{fig:vamos-relaxation}
\end{figure}

\section{Cartwright divisors and matroid adjoints}\label{sec:Cartwright}

In \cite{Cartwright15}, Cartwright studied a rank 2 divisor on the Levi graph of a rank 3 matroid, and showed that it is the tropicalization of a rank 2 divisor on an algebraic curve if and only if the matroid is realizable. In this subsection, we revisit these Cartwright divisors and relate the associated $2$-dimensional tropical linear series to matroid adjoints.

\subsection{Cartwright divisors on Levi graphs} We begin by briefly reviewing Cartwright's construction. Let $M$ be a simple rank 3 matroid on a finite set $E$. The \emph{Levi graph} $\Gamma_M$ is the bipartite incidence graph between the ground set $E$ and the set of rank 2 flats $H$. We write $[e,F]$ for the edge of $\Gamma_M$ connecting $e \in E$ to $F \in H$, when $e \in F$.  Cartwright studied the case where all edge lengths are 1. Here, we consider $\Gamma_M$ as a metric graph with arbitrary positive edge lengths. %

The \emph{Cartwright divisor} of the simple rank $3$ matroid $M$ is $$D_M := \sum_{e \in E} e.$$ It has rank $r_{\BN}(D_M) = 2$.  This was proved by Cartwright when all edge lengths are equal to 1 \cite[Proposition~2.2]{Cartwright15}. To extend this to arbitrary edge lengths, note that the Levi graph of a rank 3 matroid has girth 6 and apply \cite[Corollary~2.2]{Jensen16}.
In this section, we study the tropical linear series $\Sigma \subseteq R(D_M)$ of dimension 2 and relate their local matroids at $D_M$ to adjoints of $M$.

\subsection{Matroid adjoints}

Let $H = \{H_1, \ldots, H_s \}$ be the set of corank $1$ flats of a simple rank $r$ matroid $M$. An \emph{adjoint} of $M$ is a rank $r$ matroid $W$ on $H$ such that, for each rank $k$ flat $F$ of $M$, 
\begin{equation}\label{eq:adjoint-flats}
\{ H_i : F \subseteq H_i \}
\end{equation}
is a corank $k$ flat of $W$. %
Any realization of $M$ gives rise to an adjoint (and a realization of this adjoint), by viewing the realization as a point configuration spanning $\PP^r$ and interpreting the corank $1$ flats of the realization as a point configuration in the dual projective space. Conversely, if $M$ has a realizable adjoint, then $M$ is realizable  \cite[Corollary 3.2]{bachem1986extension}.

In general, a nonrealizable matroid may or may not have an adjoint. Moreover, when an adjoint exists, it is typically not unique. See \cite{fu2024adjoints} for further details on matroid adjoints.

We recall that every rank $3$ matroid $M$ has an adjoint. One of these, called the \emph{free adjoint}, is maximal in the sense that any basis of any adjoint is a basis of the free adjoint. The rank 2 flats of the free adjoint are those of the form \eqref{eq:adjoint-flats}, together with the disjoint pairs $\{H_i, H_j \}$.

\subsection{Local matroids at Cartwright divisors}

Let $M$ be a simple rank $3$ matroid, let $D_M$ be the Cartwright divisor on the Levi graph $\Gamma_M$, and let $\Sigma \subseteq R(D_M)$ be a rank 2 tropical linear series. By \Cref{prop:dagger-criterion}, any tropical linear series $\Sigma \subset R(D_M)$ has big minimizers, so the local matroid $M_{\Sigma}$ is well-defined. The connected components of $\Gamma_M \smallsetminus \supp(D_M)$ are the open subgraphs $\gamma_F$ consisting of a vertex $F \in H$ together with the interiors of its adjacent edges. %
When no confusion seems possible, we identify $\{\gamma_F:F\in H\}$, i.e., the ground set of the local matroid $M_{ \Sigma}$, with $H$. Thus, it makes sense to ask whether or not $M_{\Sigma}$ is an adjoint of $M$.

\begin{theorem}\label{thm:local-matroid-adjoint}
    Let $M$ be a simple rank 3 matroid and let $\Sigma\subseteq R(D_M)$ be a tropical linear series of dimension 2. Then $D_M\in |\Sigma|$ and $M_{\Sigma}$ is an adjoint of $M$.
\end{theorem}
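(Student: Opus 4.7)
The plan is to first verify, via \Cref{prop:dagger-criterion}, that $\Sigma$ has big minimizers: $D_M$ is multiplicity-free, and every element vertex $e \in E$ has valence equal to the number of rank-$2$ flats of $M$ through $e$, which is at least $2$ in a simple rank-$3$ matroid on $|E| \geq 3$ elements (since $M/e$ has rank $2$ and is loopless, so has at least $2$ parallel classes). \Cref{thm:local_matroid} then gives a well-defined rank-$3$ matroid $M_{\Sigma}$ on $H$, which is loopless if and only if $D_M \in |\Sigma|$. Since the flats of $M$ are $\emptyset$, the singletons $\{e\}$ for $e \in E$, the rank-$2$ flats $F \in H$, and $E$ itself, showing $M_{\Sigma}$ is an adjoint of $M$ reduces to three claims: (i) $M_{\Sigma}$ is loopless; (ii) for every $e \in E$, the set $L_e := \{F \in H : e \in F\}$ is a rank-$2$ flat of $M_{\Sigma}$; and (iii) for every $F \in H$, $\{F\}$ is a rank-$1$ flat of $M_{\Sigma}$.

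The central step is to construct, for each $e \in E$, a function $\varphi_e \in \Sigma$ with $F_{\varphi_e} = L_e$. Applying $r_{\BN}(\Sigma) = 2$ to the effective divisor $D' = 2e$ yields some $\varphi \in \Sigma$ with $(D_M + \ddiv\varphi)(e) \geq 2$. Any such $\varphi$ automatically satisfies $L_e \subseteq F_\varphi$: if some $F \in L_e$ had $\gamma_F \subseteq \varphi_{\min}$, then $e \in \bar\gamma_F$ would force $\varphi(e) = \min\varphi$, making every outgoing slope at $e$ nonnegative and hence $\ord_e(\varphi) \leq 0$, contradicting $(D_M + \ddiv\varphi)(e) \geq 2$. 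So the smallest flat of $M_{\Sigma}$ containing $L_e$ has rank at most $2$, and the essential task is to upgrade some such $\varphi$ so that $F_\varphi = L_e$.

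Granting this construction, claims (iii) and (i) fall out quickly. For (iii), normalize the $\varphi_e$'s to share a common minimum and set $\varphi^F := \min\{\varphi_e : e \in F\}$; then $F_{\varphi^F} = \bigcap_{e \in F} L_e = \{F' \in H : F \subseteq F'\} = \{F\}$, because the only rank-$2$ flat of $M$ containing the rank-$2$ flat $F$ is $F$ itself. For (i), any $F_0 \in H$ satisfies $F_0 \subsetneq E$ (as $M$ has rank $3$), so we may choose $e \notin F_0$; then $F_0 \notin L_e = F_{\varphi_e}$, so $F_0$ is not contained in every flat of $M_{\Sigma}$ and therefore is not a loop.

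The main obstacle is producing $\varphi_e \in \Sigma$ with $F_{\varphi_e} \subseteq L_e$. My approach is by contradiction: if the closure of $L_e$ in $M_{\Sigma}$ strictly contained $L_e$, then some $F_0 \not\ni e$ would lie in $F_\varphi$ for every $\varphi \in \Sigma$ with $(D_M + \ddiv\varphi)(e) \geq 2$, meaning no such $\varphi$ is constant at its minimum on $\bar\gamma_{F_0}$. One then exhibits an explicit test function $\psi \in R(D_M)$ that \emph{is} constant on every $\bar\gamma_F$ with $F \not\ni e$ while still having $(D_M + \ddiv\psi)(e) \geq 2$: informally, $\psi$ takes a positive value at $e$, a smaller positive value on each $F \in L_e$, and the minimum value on the rest of $\Gamma_M$. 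Its existence in $R(D_M)$ uses the fact (verified above) that every $e' \neq e$ belongs to some rank-$2$ flat not containing $e$. The subtle point is to show that such $\psi$ can be taken in $\Sigma$ itself, which I expect to follow from the maximality of tropical linear series of the prescribed dimension (\Cref{cor:strong-maximality-tls}) together with careful tropical convex-hull manipulations within $R(D_M)$.
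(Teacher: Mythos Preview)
Your overall plan is sound and matches the paper's structure: verify big minimizers, reduce the adjoint condition to claims (i)--(iii), and recognize that the heart of the matter is producing $\varphi_e \in \Sigma$ with $F_{\varphi_e} = L_e$. Your argument that any $\varphi \in \Sigma$ with $(D_M + \ddiv\varphi)(e) \geq 2$ satisfies $L_e \subseteq F_\varphi$ is correct and useful, and your derivation of (i) and (iii) from the existence of the $\varphi_e$ is fine.

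The genuine gap is exactly where you flag it: forcing the explicit function $\psi$ (with $F_\psi = L_e$) to lie in $\Sigma$. Your proposed route via \Cref{cor:strong-maximality-tls} does not work as stated. That corollary requires knowing $r_{\ind}(\langle \Sigma, \psi\rangle) \leq 3$, and there is no a priori reason adjoining $\psi$ cannot raise the independence rank; ``tropical convex-hull manipulations'' give no control here, since $\Sigma$ can be a proper submodule of $R(D_M)$ in ways you have not constrained. The paper instead uses a \emph{rigidity} argument: it writes down a specific divisor $D_{e,c} = D_M + \ddiv(\varphi_{e,c})$ and shows, via Dhar's burning algorithm and \Cref{lem:reduced-divisor-minimizer}, that $D_{e,c} - e - p$ is $v$-reduced for every $v$, hence $D_{e,c}$ is the \emph{unique} divisor in $|D_M|$ whose support contains both $e$ and a chosen interior point $p$. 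Since $r_{\BN}(\Sigma) = 2$, some divisor in $|\Sigma|$ contains $e$ and $p$, and uniqueness forces it to be $D_{e,c}$; thus $\varphi_{e,c} \in \Sigma$. The same reduced-divisor technique is what the paper uses to prove looplessness directly (your claim (i)), rather than deducing it from the $\varphi_e$'s. This rigidity-via-reducedness idea is the missing ingredient in your proposal.
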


\noindent Since any adjoint of a nonrealizable matroid is nonrealizable, combining Theorems~\ref{thm:local-realizable} and \ref{thm:local-matroid-adjoint} recovers Cartwright's non-realizability result in the case where all edge lengths are 1 and extends this to the case of arbitrary edge lengths.

\begin{corollary} \label{cor:Cartwright-not-realizable}
    If $M$ is not realizable, then $R(D_M)$ does not contain any realizable tropical linear series of dimension 2.
\end{corollary}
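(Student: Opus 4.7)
The plan is to argue by contradiction. Suppose there is a realizable tropical linear series $\Sigma \subseteq R(D_M)$ of dimension $2$. By \Cref{thm:local-matroid-adjoint}, we then have $D_M \in |\Sigma|$ and the local matroid $M_\Sigma$ is an adjoint of $M$. The goal is to show that $M_\Sigma$ must also be realizable, and then invoke the fact recalled in the preceding subsection that a matroid with a realizable adjoint is itself realizable, contradicting the nonrealizability of $M$.

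To apply \Cref{thm:local-realizable}, I first check that $D_M$ is nondegenerate in $|\Sigma|$. The divisor $D_M = \sum_{e \in E} e$ is multiplicity-free, so $\#\supp(D_M) = \deg(D_M)$, which is the maximum possible value of $\#\supp$ on $\Sym^{\deg(D_M)}(\Gamma_M)$; hence $\#\supp$ is locally maximized at $D_M$. Moreover, since $M$ is a simple rank $3$ matroid, every element of $E$ lies in at least two rank $2$ flats, so every vertex in $\supp(D_M) \subseteq E$ has valence at least $2$ in $\Gamma_M$. Thus the valence $1$ degree of $D_M$ vanishes and is trivially locally minimal. Therefore $D_M$ is nondegenerate in $|\Sigma|$.

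By \Cref{thm:local-realizable}, the local matroid $M_\Sigma$ is realizable, and by \Cref{thm:local-matroid-adjoint} it is an adjoint of $M$. Hence $M$ admits a realizable adjoint, and by \cite[Corollary 3.2]{bachem1986extension} this forces $M$ to be realizable, contradicting the hypothesis. The only substantive step is the nondegeneracy check; once that is in place, the corollary is an immediate consequence of the two main theorems of this section combined with the cited fact about adjoints.
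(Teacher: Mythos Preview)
Your proof is correct and follows the same approach as the paper, which simply says the corollary follows by combining \Cref{thm:local-realizable} and \Cref{thm:local-matroid-adjoint} with the fact that any adjoint of a nonrealizable matroid is nonrealizable. Your added nondegeneracy check for $D_M$ is appropriate, since \Cref{thm:local-realizable} is stated only for nondegenerate divisors; the paper leaves this verification implicit.
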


We also prove the following partial converse to \Cref{thm:local-matroid-adjoint}. %

\begin{theorem}\label{thm:adjoint-and-linear-series}
    Suppose that all edges of $\Gamma_M$ have length 1.  If $W$ is an adjoint of $M$, then there is a matroidal linear series $\Sigma \subseteq R(D_M)$ such that $M_{\Sigma}=W$.
\end{theorem}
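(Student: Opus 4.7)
The plan is to build, for each proper flat $L$ of $W$, a piecewise-linear function $\varphi_L \in R(D_M)$ with $F_{\varphi_L} = L$, and to take $\Sigma$ to be the tropical submodule they generate. For a rank 1 flat $\{H_j\}$ of $W$, I set $\varphi_{\{H_j\}}$ to be the hat of height $1$ at $H_j$, equal to $0$ at every other vertex and linear on each edge; the integrality of slopes is preserved because all edges have length $1$, and a direct order-computation confirms $\varphi_{\{H_j\}} \in R(D_M)$ with $F_{\varphi_{\{H_j\}}} = \{H_j\}$. For a rank 2 flat $L$ of $W$, the adjoint property forces a dichotomy: either $L = L_e := \{H_i : e \in H_i\}$ for some $e \in E$ (which is forced whenever two elements of $L$ share an element of $E$, by the uniqueness of the rank 2 flat through two points in $W$), or the elements of $L$ are pairwise disjoint as subsets of $E$. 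In the disjoint case, set $\varphi_L(H_k) = 1$ for $H_k \in L$ and $\varphi_L = 0$ at all other vertices; in the $L = L_e$ case, additionally set $\varphi_L(e) = 1$. A case-by-case slope check at each vertex of $\Gamma_M$ verifies $\varphi_L \in R(D_M)$ and $F_{\varphi_L} = L$; the constraint $\sum_{H_k \in L, e' \in H_k} 1 \leq 1$ at elements $e' \in E$ is exactly what the disjointness condition guarantees, while the extra chip at $e$ in the $L_e$ case absorbs the excess outgoing slope.

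Let $\Sigma$ be the tropical submodule generated by these $\varphi_L$. By \Cref{prop:dagger-criterion}, $\Sigma$ has big minimizers; moreover the constant $0$ lies in $\Sigma$ (since $\min_L \varphi_L = 0$ by inspection), so $D_M \in |\Sigma|$. The identification $M_\Sigma = W$ follows from the observation that for any tropical combination $\varphi = \min_\alpha(\varphi_{L_\alpha} + c_\alpha) \in \Sigma$, normalized so each $\min \varphi_{L_\alpha} = 0$, the minimizer is $\varphi_{\min} = \bigcup_{\alpha \in A} (\varphi_{L_\alpha})_{\min}$ where $A$ indexes the $\alpha$ attaining $\min_\beta c_\beta$, so that $F_\varphi = \bigcap_{\alpha \in A} L_\alpha$ is itself a flat of $W$ (flats are closed under intersection). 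Combined with the fact that each flat $L \in \cL(W) \smallsetminus \{H\}$ arises as $F_{\varphi_L}$ by construction, \Cref{lem:matroid_lemma} identifies $\{F_\varphi : \varphi \in \Sigma\} \cup \{H\}$ with $\cL(W)$, and hence $M_\Sigma = W$. The matroidal parametrization $\Phi \colon \Trop(W) \to R(D_M)$ is then defined by sending each cocircuit indicator $\chi_{H \smallsetminus L}$ to $\varphi_L$ and extending tropically, with well-definedness verified by checking that the valuated covector relations in $\Trop(W)$ are respected by the corresponding identities among the $\varphi_L$.

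Finally, to verify $r_\BN(\Sigma) = 2$, and hence that $\Sigma$ is a matroidal linear series of dimension 2, I would use a stable intersection argument analogous to the one in the proof of \Cref{thm:intervalProof}: for each $p \in \Gamma_M$, the locus $\{a \in \Trop(W) : D_M - p + \ddiv(\Phi(a)) \geq 0\}$ is the intersection of $\Trop(W)$ with a tropical hyperplane, and because $\Trop(W)$ has rank 3, its stable intersection with two such hyperplanes is nonempty. The main technical obstacle I anticipate is this stable intersection step, since it requires identifying each vanishing hyperplane precisely and arguing properness of the intersections using the combinatorics of the adjoint $W$; it pairs naturally with the well-definedness check for $\Phi$, which likewise rests on translating the combinatorics of $W$ into global identities among the explicitly constructed $\varphi_L$ on the Levi graph.
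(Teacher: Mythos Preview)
Your construction has a fatal gap: the tropical module $\Sigma$ you build satisfies $r_\BN(\Sigma)\leq 1$, not $2$. The problem is the choice $\varphi_{L_e}(e)=1$. With this choice, every generator $\varphi_L$ satisfies $\varphi_L(e)\leq\varphi_L(H_k)$ whenever $e\in H_k$ (check each case), and this inequality persists under tropical linear combinations: if $\varphi=\min_L(\varphi_L+c_L)$ and the minimum at $H_k$ is achieved by $L_0$, then $\varphi(e)\leq\varphi_{L_0}(e)+c_{L_0}\leq\varphi_{L_0}(H_k)+c_{L_0}=\varphi(H_k)$. Since each $\varphi_L$ is linear on $[e,H_k]$ with slope $\varphi_L(H_k)-\varphi_L(e)\in\{0,1\}$, the outgoing slope of any $\varphi\in\Sigma$ at $e$ along every incident edge is nonnegative. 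Hence $\ord_e(\varphi)\leq 0$ and $(D_M+\ddiv\varphi)(e)\leq D_M(e)=1$ for all $\varphi\in\Sigma$. Taking the test divisor $D'=2e$ shows $r_\BN(\Sigma)<2$, so your $\Sigma$ is not a tropical linear series of dimension~$2$, and the conclusion $M_\Sigma=W$ (in the sense of the theorem) does not follow.

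The paper avoids this by setting $\varphi_e(e)=2$; then $(D_M+\ddiv\varphi_e)(e)=1+\deg_{\Gamma_M}(e)\geq 3$, and $r_\BN=2$ is verified by a direct case analysis on pairs of edges rather than by stable intersection. Your proposed stable intersection argument would hit a second obstacle even with corrected generators: unlike the interval case in \Cref{thm:intervalProof}, the parametrizing functions on $\Gamma_M$ do not have pairwise distinct slopes on each edge, so the condition $\ord_p(\Phi(a))\geq 1$ at an interior point $p$ is not the bend locus of a single tropical linear form in~$a$, and the hyperplane description fails. Your computation that $\{F_\varphi:\varphi\in\Sigma\}$ coincides with the proper flats of $W$ is correct and cleanly argued---indeed it carries over verbatim to the paper's (larger) $\Sigma$ and gives a slightly slicker route to $M_\Sigma=W$ than the paper's---but it does not rescue the Baker--Norine rank.
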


\begin{remark}
When the edges of $\Gamma_M$ do not all have equal length, we do not know whether there exists a tropical linear series $\Sigma \subseteq R(D_M)$ of dimension 2.
\end{remark}

\medskip

In the arguments below, we use the notion of reduced divisors and the burning algorithm for metric graphs, following \cite{luo2011rank,baker2013chip}. Let $v\in \Gamma$ be any point. An effective divisor $D$ is \emph{$v$-reduced} if for every closed subgraph $A\subseteq\Gamma \smallsetminus \{v\}$, there is a point $x \in A$ such that 
\[D(x) < \outdeg_A(x).\]
Here, $\outdeg_A(x)$ is the degree of $x$ in $\Gamma$ minus the degree of $x$ in $A$. The following lemma relates reduced divisors to the minimizers of functions in $R(D)$.

\begin{lemma}\label{lem:reduced-divisor-minimizer}
   Let $D$ be an effective divisor on $\Gamma$. Then  
   \[\{v\in \Gamma:D\text{ is $v$-reduced}\}=\bigcap_{\varphi\in R(D)}\varphi_{\min}.\]
\end{lemma}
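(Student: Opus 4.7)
The plan is to prove each inclusion separately by translating the chip-firing condition defining $v$-reducedness into slope conditions on piecewise-linear functions in $R(D)$.

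For the inclusion $\{v : D \text{ is } v\text{-reduced}\} \subseteq \bigcap_{\varphi \in R(D)} \varphi_{\min}$, I will fix $v$ such that $D$ is $v$-reduced and an arbitrary $\varphi \in R(D)$, and argue by contradiction that $v \in \varphi_{\min}$. If not, the closed set $A := \varphi_{\min}$ is contained in $\Gamma \smallsetminus \{v\}$. By \Cref{lem:minimizer}, $A$ is a closed subgraph of $\Gamma$, so it is a valid test set in the definition of $v$-reducedness. At any $x \in A$, all outgoing slopes of $\varphi$ are nonnegative (since $x$ is a minimum), and along each of the $\outdeg_A(x)$ tangent directions leaving $A$ these slopes are strictly positive, hence at least $1$ because they are integers. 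Summing gives $-\ord_x(\varphi) \geq \outdeg_A(x)$, and effectivity of $D + \ddiv(\varphi)$ forces $D(x) \geq \outdeg_A(x)$ for every $x \in A$, contradicting $v$-reducedness.

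For the reverse inclusion, I suppose that $D$ is not $v$-reduced and construct $\varphi \in R(D)$ whose minimum is not attained at $v$. Pick a closed subgraph $A \subseteq \Gamma \smallsetminus \{v\}$ with $D(x) \geq \outdeg_A(x)$ for all $x \in A$, and fix $\epsilon > 0$ smaller than the distance from $v$ to $A$ and smaller than every edge length in a refinement of a model of $\Gamma$ adapted to $A$. Define
\[
\varphi_\epsilon(x) := \min\{d(x,A),\epsilon\} - \epsilon,
\]
so $\varphi_\epsilon$ equals $-\epsilon$ on $A$, climbs with integer slope $+1$ on the length-$\epsilon$ collar of $A$ in $\Gamma \smallsetminus A$, and equals $0$ on the remainder of $\Gamma$. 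A direct order computation shows $\varphi_\epsilon \in R(D)$: at $x \in A$ one has $\ord_x(\varphi_\epsilon) = -\outdeg_A(x) \geq -D(x)$; at points strictly inside the collar, $\ord_x(\varphi_\epsilon) = 0$; and at a transition point where the slope drops from $1$ to $0$, the order equals $+1$. Since $d(v,A) > \epsilon$, we have $\varphi_\epsilon(v) = 0 > -\epsilon = \min \varphi_\epsilon$, contradicting the assumption that $v \in \bigcap_\varphi \varphi_{\min}$.

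The main technical care goes into the second inclusion, ensuring that $\varphi_\epsilon$ is genuinely piecewise linear with integer slopes. This requires choosing $\epsilon$ small enough that the $\epsilon$-collar of $A$ in $\Gamma \smallsetminus A$ is a disjoint union of half-open intervals attached to $A$ at its boundary points, with no self-overlap, which is automatic once $\epsilon$ is less than half the length of the shortest edge not contained in $A$ in a suitably refined model. Given that, the order computations and the verification of effectivity are elementary.
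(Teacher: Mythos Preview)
Your proof is correct and follows essentially the same approach as the paper's. Both directions match: for the forward inclusion you take $A = \varphi_{\min}$ as the test subgraph and use effectivity of $D + \ddiv(\varphi)$ to get $D(x) \geq \outdeg_A(x)$, and for the reverse inclusion you construct the same ``fire from $A$ for time $\epsilon$'' function (your $\varphi_\epsilon$ differs from the paper's only by the additive constant $\epsilon$), with the same verification that it lies in $R(D)$.
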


\begin{proof}%
    Suppose $D$ is $v$-reduced. Let $\varphi\in R(D)$. Since $D+\ddiv(\varphi)$ is effective, $\varphi_{\min}$ is a closed subgraph satisfying $D(x)\geq \outdeg_{\varphi_{\min}}(x)$ for all $x \in \varphi_{\min}$. Therefore, $v\in \varphi_{\min}$. 
    
    Conversely, suppose $D$ is not $v$-reduced. Then there is some closed subgraph $A$ such that $D(x)\geq \outdeg_{A}(x)$ for all $x \in A$. Let $\varphi$ be the function that takes the value 0 on $A$, has slope 1 for a sufficiently small distance $\epsilon$ on every tangent direction leaving $A$, and takes constant value $\epsilon$ elsewhere. By construction, $\varphi \in R(D)$ and $v \not \in \varphi_{\min}$.
\end{proof}

\begin{proof}[{\bf Proof of \Cref{thm:local-matroid-adjoint}}]
    We first show that $D_M\in |\Sigma|$. By \Cref{thm:local_matroid}, it is enough to show that the local matroid $M_{\Sigma}$ is loopless. Recall that the ground set of $M_{\Sigma}$ is the set of open subgraphs $\{ \gamma_F : F \mbox{ is a rank 2 flat of $M$} \}$. %

    Let $F$ be a rank 2 flat of $M$. We will show that $\gamma_F$ is not a loop of $M_{\Sigma}$.  Let $e_1 \neq e_2\in F$. We claim that $D' = D_M - e_1 - e_2$ is $v$-reduced for all $v \in \gamma_F$.  To see this, apply Dhar's burning algorithm.  A fire started at $v$ will burn through $F,e_1$ and $e_2$. Then it burns through all flats containing $e_1$ or $e_2$. If $e \in E$ is not contained in $F$, then there is a rank 2 flat $G_1$ containing $\{e,e_1\}$ and a distinct rank 2 flat $G_2$ containing $\{e,e_2\}$, which the fire burns through.  The fire thus burns through $e$, and then all rank 2 flats containing $e$.  Thus, the fire burns every element of $E$ that is not contained in $F$, and every element of $H$. Finally, since every element of $E$ is contained in at least two rank 2 flats, the fire burns through all remaining elements of $E$. Since the whole graph burns, $D'$ is $v$-reduced. This proves the claim.

    Now, since $r_{\BN}(\Sigma) = 2$, there is some $\varphi \in \Sigma$ such that $D' + \ddiv(\varphi)$ is effective.  Applying \Cref{lem:reduced-divisor-minimizer} for $D'$, it follows that $\gamma_F \subseteq \varphi_{\min}$. Hence $\gamma_F$ is not contained in the flat $F_{\varphi}$ and is not a loop of the local matroid. It follows that $M_{\Sigma}$ is loopless and $D_M \in |\Sigma|$.

    It remains to show that $M_{\Sigma}$ is an adjoint of $M$, i.e., we must show that $\{ \gamma_F : e \in F \}$ is a flat of $M_{\Sigma}$, for each $e \in E$.  To see this, fix an element $e \in E$.  For each $e' \in E \smallsetminus \{ e \}$, let $F' \in H$ be the unique rank 2 flat containing $\{ e,e' \}$.  Fix a positive real number $c$ that is smaller than the length of $[e',F']$ for all $e'$, and let $\varphi_{e,c}$ be the function that takes the value 0 on $\cup_{e \notin F} \gamma_F$, has slope 1 for distance $c$ on each edge $[e',F']$, and takes the value $c$ outside.  The divisor
\begin{equation}\label{eqn:distinguished-divisor}
    D_{e,c}:=D_M+\ddiv(\varphi_{e,c})
\end{equation}
is multiplicity free. Its support consists of $e$, together with the point in $[e',F']$ at distance $c$ from $e'$, whenever $F'$ is a rank $2$ flat containing the pair $\{e, e'\}$.
    It is enough to show that the function $\varphi_{e,c}$ is in $\Sigma$, because then $F_{\varphi_{e,c}} = \{ \gamma_F : e \in F \}$.
    
   Let $e'\in F\smallsetminus\{e\}$. Let $p$ be in the edge $[e',F]$ at distance $c$ from $e'$, with $0 < c< 1$. Let $D'' := D_{e,c} - e - p$, with $D_{e,c}$ as in \eqref{eqn:distinguished-divisor}. Note that $D''$ is effective and multiplicity free. We then observe that $ \Gamma_M \smallsetminus \supp(D'')$ is connected; the proof is straightforward and similar to the burning algorithm argument above.  
Since $D''$ is multiplicity free and $\Gamma_M \smallsetminus \supp(D'')$ is connected, the divisor $D''$ is $v$-reduced for all $v$. Therefore, by  \Cref{lem:reduced-divisor-minimizer}, $D''$ is rigid. In other words, $D_{e,c}$ is the unique divisor in $|D_M|$ whose support contains $e$ and $p$. Since $r_{\BN}(\Sigma) = 2$, it follows that $D_{e,c} \in |\Sigma|$ and  $\varphi_{e,c} \in \Sigma$, as required.  %
\end{proof}

\begin{proof}[{\bf Proof of Theorem~\ref{thm:adjoint-and-linear-series}}]
Let $W$ be an adjoint of $M$. We construct a homomorphism of tropical modules $\Phi \colon \Trop(W) \to R(D_M)$ whose image is a matroidal linear series $\Sigma$ with $M_{\Sigma} = W$.

For $e \in E$, let $\varphi_e \in \PL(\Gamma_M)$ be the function that is linear on each edge and takes the following values at vertices:
\begin{equation} \label{eq:phiedef}
\varphi_e(e) = 2, \quad \varphi_e(F) = 1 \mbox{ if $e \in F$}, \quad \mbox{ and } \varphi(v) = 0 \mbox{ otherwise.}
\end{equation}
Here, $F$ denotes a rank $2$ flat of $M$.  If $\varphi_e \in \Sigma$, then the corresponding flat of the local matroid is
\[
H_e := \{ F : e \in F \}.
\]

For each rank 2 flat $G$ of $W$ that is not of the form $H_e$, we define a function $\varphi_G$ as follows. The flat $G$ is a collection of pairwise disjoint rank 2 flats of $M$. The function $\varphi_G$ takes the following values at vertices
\[
\varphi_G(F) = \begin{cases}
    1,& \text{ if }F\in G \\
    0, & \text{ otherwise,}
\end{cases}
\]
and is linear on each edge. 

Let $\Sigma \subseteq R(D_M)$ be the submodule generated by
\[\{ \varphi_e : e \in E \} \cup \{ \varphi_G : G \text{ is a rank 2 flat of $W$ not of the form }H_e\}.\] 
Note that $\Sigma_M$ contains the constant functions, because $\min_{e} \varphi_e = 0$.  

We claim that $\Sigma$ is a matroidal linear series of dimension $2$. 
To see that $r_{\BN}(\Sigma) = 2$, let $[e_1,F_1] , [e_2,F_2]$ be edges in $\Gamma_M$.  It suffices to show that, for two arbitrary points $v_1 \in [e_1,F_1] , v_2 \in [e_2,F_2]$, there is a function $\varphi \in \Sigma$ such that $\ddiv (\varphi) + D_M - v_1 - v_2 \geq 0$.  We break this into cases:

\begin{itemize}
\item  Suppose $e_1 = e_2$ and $F_1=F_2$.  Then $\varphi$ can be chosen to be a tropical linear combination of $0$, $\varphi_e$, and $\varphi_{e'}$, where $e' \neq e_1$ is contained in the flat $F_1$.
\item  Suppose $e_1=e_2$ and $F_1\neq F_2$.  Let $e_1'$ be an element contained in $F_1$ but not in $F_2$, and let $e_2'$ be an element contained in $F_2$ but not in $F_1$.  Then $\varphi$ can be chosen to be a tropical linear combination of $0$, $\varphi_{e_1}$, $\varphi_{e_1'}$, and $\varphi_{e_2'}$.
\item  Suppose $F_1=F_2$ and $e_1\neq e_2$.  Then $\varphi$ can be chosen to be a tropical linear combination of $0$, $\varphi_{e_1}$, and \nolinebreak $\varphi_{e_2}$.
\item  Finally, suppose $e_1\neq e_2$ and $F_1\neq F_2$, i.e., $[e_1,F_1]$ and $[e_2,F_2]$ are disjoint. If $F_1$ and $F_2$ share a common element $e$, then $\varphi$ can be chosen to be a tropical linear combination of $0$, $\varphi_{e_1}$, $\varphi_{e_2}$, and $\varphi_{e}$.  Otherwise, if $F_1$ and $F_2$ are disjoint, then there is some rank 2 flat $G$ of $W$ that contains $F_1$ and $F_2$, and $\varphi$ is a tropical linear combination of $0$, $\varphi_{e_1}$, $\varphi_{e_2}$, and $\varphi_G$.
\end{itemize}
We now show that $\Sigma$ is parametrized by $\Trop(W) \subseteq \Rbar^{H}$. For each $F\in H$, let $\delta_F$ be the indicator vector of $\{F\}$, i.e., 
\[
    \delta_F(G) = \begin{cases}
            0,& \text{ if }G=F\\
            \infty,& \text{ else}.
        \end{cases}
\]
Recall that $\gamma_F$ is the subgraph whose edges are those containing $F$. Consider the function $\psi_F \in \PL(\Gamma_M)$ given by the distance function
\[
    \psi_F(p):=\operatorname{d}(p,\gamma_F).
\]
Define a tropical linear map
$\Rbar^{H}\to \operatorname{PL}(\Gamma_M),\ \delta_F\mapsto \psi_F
$
which restricts to a tropical linear map $$\Phi\colon\Trop(W)\to\operatorname{PL}(\Gamma_M).$$ Under the map $\Phi$, the covector $H \smallsetminus H_e$ of $W$ is sent to the function $\varphi_e$ and the covector $H \smallsetminus G$ is sent to the function $\varphi_G$. Hence, $\Phi$ maps $\Trop(W)$ onto $\Sigma \subseteq R(D_M)$.

Finally, we show that the local matroid at $D_M$ is the adjoint $W$.  Recall that the ground set of $M_{\Sigma}$ consists of the subgraphs $\gamma_F$ for $F \in H$.  For every element $e \in E$, the function $\varphi_e$ obtains its minimum on $\cup_{F \notin H_e} \gamma_F$, and for a rank 2 flat $G$ of $W$ not of the form $H_e$, the function $\varphi_G$ obtains its minimum on $\cup_{F \notin G} \gamma_F$.  Thus, every rank 2 flat of $W$ is a flat of the local matroid.  These are all the rank 2 flats of $M_{\Sigma}$, because they are the minimizers of the generators of $\Sigma$. The rank 1 flats of $W$ and the rank 1 flats of $M_{\Sigma}$ agree by definition. Hence, $M_{\Sigma}=W$.
\end{proof}

\section{Further examples} \label{sec:examples}

In this section we revisit examples from \cite{LPP12, ABBR2, MUW21} that illustrate ways in which the properties of tropical complete linear series $|D|$ differ from well-known properties of linear series on algebraic curves. In each of these examples, we explain how to understand the relevant tropical linear series contained in $R(D)$ and observe that their behavior more closely reflects the properties of linear series on algebraic curves. 

\medskip

Recall that the  \emph{canonical divisor} $K_\Gamma$ on a metric graph $\Gamma$ is 
\[K_\Gamma := \sum_{v\in \Gamma}(\deg v-2)\cdot v.\]
Its rank is $r(K_\Gamma) = g-1$, where $g$ is the genus, or loop order, of $\Gamma$. By Riemann--Roch and specialization, $R(K_\Gamma)$ always contains the tropicalization of a linear series of rank $g -1$.

\begin{example}\label{ex:canonical-divisor-complete-linear-system}
Let $\Gamma$ be the genus $2$ barbell graph shown in \Cref{fig:canonical-barbell}, with canonical divisor $K_\Gamma = v + w$.  Note that the complete linear system in this example is not pure dimensional.  Divisors in $|K_\Gamma|$ that are supported on the bridge have two degrees of freedom and so lie in a 2-dimensional maximal face of $|K_\Gamma|$, whereas divisors that are supported on either of the loops lie in a 1-dimensional maximal face. In particular, $R(K_\Gamma)$ is not a tropical linear series.

\begin{figure}[ht]
    \begin{subfigure}[h]{0.40\linewidth}
        \centering
        \begin{tikzpicture}[scale=1.5]
            \draw (-0.5, 0) circle (0.5);
            \draw (0,0) -- (1,0);
            \draw (1.5, 0) circle (0.5);
            \node[vertex, label={60:$v$}] at (0, 0) {};
            \node[vertex, label={120:$w$}] at (1, 0) {};
        \end{tikzpicture}
    \end{subfigure}
    \hfill
    \begin{subfigure}[h]{0.55\linewidth}
        \centering
        \label{fig:dumbbell-rank-1}
        \begin{tikzpicture}[scale=1.2]
            \begin{scope}[shift={(0, 1.5)}, scale=0.7]
                \draw (-1, 0) circle (0.5);
                \draw (-0.5,0) -- (0.5,0);
                \draw (1, 0) circle (0.5);
                \node[divisor, label={180:1}] at (-0.5, 0) {};
                \node[divisor, label={0:1}] at (0.5, 0) {};
            \end{scope}
            \begin{scope}[shift={(-2.2, 0.2)}, scale=0.7]
                \draw (-1, 0) circle (0.5);
                \draw (-0.5,0) -- (0.5,0);
                \draw (1, 0) circle (0.5);
                \node[divisor, label={60:2}] at (-0.5, 0) {};
            \end{scope}
            \begin{scope}[shift={(2.2, 0.2)}, scale=0.7]
                \draw (-1, 0) circle (0.5);
                \draw (-0.5,0) -- (0.5,0);
                \draw (1, 0) circle (0.5);
                \node[divisor, label={120:2}] at (0.5, 0) {};
            \end{scope}
            \begin{scope}[shift={(-1.73, -1.5)}, scale=0.7]
                \draw (-1, 0) circle (0.5);
                \draw (-0.5,0) -- (0.5,0);
                \draw (1, 0) circle (0.5);
                \node[divisor, label={180:2}] at (-1.5, 0) {};
            \end{scope}
            \begin{scope}[shift={(1.73, -1.5)}, scale=0.7]
                \draw (-1, 0) circle (0.5);
                \draw (-0.5,0) -- (0.5,0);
                \draw (1, 0) circle (0.5);
                \node[divisor, label={0:2}] at (1.5, 0) {};
            \end{scope}
            \draw[thick, pattern=north west lines] (90:1) -- (210:1) -- (330:1) -- cycle;
            \draw[thick] (210:1) -- (210:2);
            \draw[thick] (330:1) -- (330:2);

            \draw[dotted] (90:1)--(90:1.5);
            \draw[dotted] (210:1)--(-1.25,-0.05);
            \draw[dotted] (330:1)--(1.25, -0.05);
            \draw[dotted] (210:2)--(-1.73, -1.5);
            \draw[dotted] (330:2)--(1.73,-1.5);
            
            \node[vertex] at (90:1) {};
            \node[vertex] at (210:1) {};
            \node[vertex] at (330:1) {};
            \node[vertex] at (210:2) {};
            \node[vertex] at (330:2) {};

        \end{tikzpicture}
    \end{subfigure}

    \caption{The barbell graph and the complete linear system of its canonical divisor $K_\Gamma = v + w$. The divisor corresponding to each vertex of the complete linear system is depicted adjacent to that vertex.}
    \label{fig:canonical-barbell}
\end{figure}
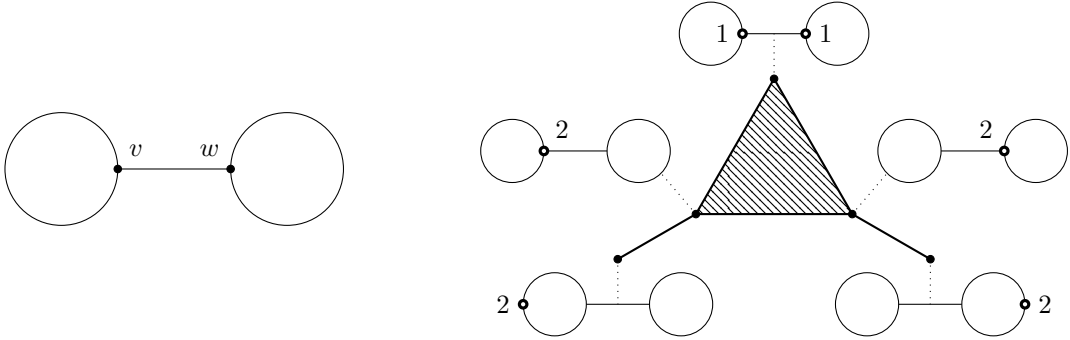

We claim that $R(K_{\Gamma})$ contains a unique tropical linear series $\Sigma$ of dimension 1. %
Let $x$ be the point of $\Gamma$ opposite $v$ on the left loop. There is a unique divisor $D_x \in |K_\Gamma|$ such that $\supp(D_x)$ contains $x$.  Choose $\varphi_x$ such that $D_x = K_\Gamma + \ddiv \varphi_x$.  Similarly, choose $\varphi_y$ so that $D_y := K_\Gamma + \ddiv(\varphi_y)$ is the unique divisor in $|K_\Gamma|$ whose support contains the point $y$ on the right loop opposite $w$. 
Since $\Sigma$ has Baker-Norine rank 1, $\varphi_x , \varphi_y \in \Sigma$.  Now, any tropical linear series of dimension $1$ contained in $R(K_\Gamma)$ must contain $\varphi_x$ and $\varphi_y$.  However, one readily checks that 
$\langle \varphi_x , \varphi_y \rangle$ is itself a tropical linear series of dimension 1. By \Cref{prop:maximality-tls}, it follows that $\langle \varphi_x, \varphi_y\rangle$ is the unique tropical linear series of dimension 1 in $R(K_\Gamma)$. Note that $|\langle \varphi_x, \varphi_y \rangle|$ is the realizability locus for canonical divisors determined in \cite[Example~6.4]{MUW21}, and does not contain $K_\Gamma$ (cf. \cite[Example~4.4]{ABBR2}).

\end{example}

We now consider another example of a divisor $D$ with $r(D) = 1$ that is not the tropicalization of any divisor of rank $1$. We revisit an example, attributed to Ye Luo in \cite[Example~5.13]{ABBR2}. Here, we give a different proof for the non-realizability of $D$, by showing that $R(D)$ does not contain \emph{any} tropical linear series of positive dimension.  %

\begin{example}
\label{Ex:Harmonic}
Consider the divisor $D = p+q+s$ on the graph $\Gamma$ pictured in \Cref{Fig:NonRealize}, where all edge lengths are equal. Note that this divisor has rank $r(D) = 1$. In \cite[Example~5.13]{ABBR2} it was proven that $D$ cannot be the tropicalization of a divisor of positive rank on an algebraic curve, by showing that there is no degree 3 harmonic morphism from a modification of $\Gamma$ to a tree.

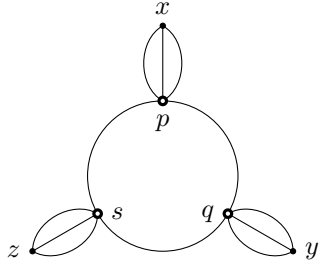
\begin{figure}[ht]
\begin{tikzpicture}

\draw (0,0) circle (1);

\draw (90:1)--(90:2);
\draw (90:1)to[bend right=60](90:2);
\draw (90:1)to[bend left=60](90:2);

\draw (210:1)--(210:2);
\draw (210:1)to[bend right=60](210:2);
\draw (210:1)to[bend left=60](210:2);

\draw (330:1)--(330:2);
\draw (330:1)to[bend right=60](330:2);
\draw (330:1)to[bend left=60](330:2);

\node[divisor, label={270:$p$}] at (90:1) {};
\node[vertex,scale=0.7,label={90:$x$}] at (90:2) {};
\node[divisor, label={0:$s$}] at (210:1) {};
\node[vertex,scale=0.7, label={180:$z$}] at (210:2) {};
\node[divisor, label={180:$q$}] at (330:1) {};
\node[vertex,scale=0.7, label={0:$y$}] at (330:2) {};

\end{tikzpicture}
\caption{Luo's example of a non-realizable divisor of positive rank.}
\label{Fig:NonRealize}
\end{figure}

We claim that $R(D)$ does not contain a tropical linear series of dimension 1.  %
Note that $|D|$ contains unique divisors 
\[
D_x = D + \ddiv(\varphi_x), \quad D_y = D + \ddiv(\varphi_y), \quad \mbox{and} \quad D_z = D = \ddiv(\varphi_z)
\]
whose support contains $x$, $y$, and $z$, respectively. 
Thus, any tropical submodule of $R(D)$ with rank $r_\BN(\Sigma) = 1$ must contain $\{ \varphi_x, \varphi_y, \varphi_z \}$, which is tropically independent because these three functions have distinct slopes along $[s,q]$. 
\end{example}

The argument in \Cref{Ex:Harmonic} illustrates a general fact that we record here for future reference. %

\begin{proposition}
\label{Lem:Slopes}
Let $\Sigma \subseteq R(D)$ be a tropical linear series of dimension $r$.  For each tangent vector $\zeta$, the set of slopes $s_\zeta(\Sigma)$ has size exactly $r+1$.
\end{proposition}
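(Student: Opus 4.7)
The plan is to prove the equality in two parts: $|s_\zeta(\Sigma)| \leq r+1$ and $|s_\zeta(\Sigma)| \geq r+1$. For the upper bound, I would show that if $\varphi_0,\dots,\varphi_k \in \Sigma$ have pairwise distinct slopes $s_0 < s_1 < \cdots < s_k$ along $\zeta$, then they are tropically independent, so that $k+1 \leq r_{\ind}(\Sigma) = r+1$. To exhibit a certificate of independence, let $p$ be the basepoint of $\zeta$ and pick $\epsilon>0$ small enough that each $\varphi_i$ is affine on a length-$\epsilon$ segment from $p$ in direction $\zeta$, so that $\varphi_i(q_t) = \varphi_i(p) + s_i t$ for $t \in [0,\epsilon]$. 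Setting $a_i := \alpha s_i^2 + \beta s_i - \varphi_i(p)$ for $\alpha>0$ small and $\beta \in (-\epsilon, 0)$, strict convexity of $\alpha s^2 + \beta s$ in $s$ places all pairs $(s_i,\alpha s_i^2+\beta s_i)$ on the lower convex hull, so every $\varphi_i + a_i$ is the unique minimum of the tropical combination on a sub-interval of the segment; the choice of $\beta$ ensures all transitions lie in $(0,\epsilon)$.

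For the lower bound I would induct on $r$. The case $r=0$ is immediate since $\Sigma$ consists of constants. For $r \geq 1$, I would find a point $x \in \Gamma$ such that $\Sigma(-x) := \{\varphi \in \Sigma : D+\ddiv(\varphi) \geq x\}$, regarded as a submodule of $R(D-x)$, is a tropical linear series of dimension $r-1$ with $s_\zeta(\Sigma(-x)) \subsetneq s_\zeta(\Sigma)$; the inductive hypothesis then yields $|s_\zeta(\Sigma)| \geq |s_\zeta(\Sigma(-x))| + 1 = r+1$. By \Cref{rem:submodule-affine}, $\Sigma(-x)$ is always polyhedral, and a generic choice of $x$ makes it a tropical linear series of dimension $r-1$. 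To lose a slope, let $s_{\max} = \max s_\zeta(\Sigma)$ and consider the tropical submodule $\Sigma^{<s_{\max}} := \{\varphi \in \Sigma : s_\zeta(\varphi) < s_{\max}\}$. Since $\Sigma^{<s_{\max}} \subsetneq \Sigma$, \Cref{cor:strong-maximality-tls} forces $r_{\BN}(\Sigma^{<s_{\max}}) \leq r-1$; conversely, for any effective $E$ of degree $r-1$ and any point $p^*$ missed by the supports of all top-slope divisors, applying $r_{\BN}(\Sigma) = r$ to $E+p^*$ produces a $\varphi$ with $D+\ddiv(\varphi) \geq E+p^*$ that is automatically forced into $\Sigma^{<s_{\max}}$, giving $r_{\BN}(\Sigma^{<s_{\max}}) = r-1$. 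Picking $x = p^*$ (generic within the missed region) yields $\Sigma(-x) \subseteq \Sigma^{<s_{\max}}$, so $s_{\max} \notin s_\zeta(\Sigma(-x))$.

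The main obstacle will be verifying that such a missed point $p^*$ exists, i.e.\ that the union of supports of divisors of maximal $\zeta$-slope does not exhaust $\Gamma$. I expect this to follow from a local analysis at $p$: a function of maximal $\zeta$-slope is affine on an initial segment in every tangent direction other than $\zeta$, and therefore its divisor cannot charge any point in a sufficiently small neighborhood of $p$ in those other directions — so when $p$ has valence $\geq 2$ such a $p^*$ lives near $p$, and when $p$ has valence $1$ the point $p^*= p$ itself works since the maximality of $s_\zeta$ forces the $p$-multiplicity of any top-slope divisor to vanish. Verifying uniformity of the affine segment across the full (infinite) family of top-slope functions — or bypassing it via a dimension count on $|\Sigma|$, using that top-slope divisors form a proper closed subset of $|\Sigma|$ by the upper bound — is the subtle step.
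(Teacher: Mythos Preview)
Your upper bound is correct and is essentially the observation underlying the paper's argument (functions with pairwise distinct slopes along $\zeta$ are tropically independent). But the paper handles \emph{both} inequalities at once, and much more cheaply, by restricting to a short interval $I$ starting at the basepoint of $\zeta$: choosing $I$ small enough that $I\smallsetminus\{v\}$ misses $\supp(D)$ and every generator of $\Sigma$ is linear on $I$, the restriction $\Sigma|_I$ is again a tropical linear series of dimension $r$ (restrictions of tropical linear series are tropical linear series, as noted in the introduction). On $I$ the generators are affine, two affine functions are tropical scalar multiples of one another iff their slopes agree, and affine functions with distinct slopes are independent; hence the number of distinct slopes equals $r_{\ind}(\Sigma|_I)=r+1$. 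No induction, no auxiliary submodules.

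Your inductive lower bound, by contrast, has real gaps. First, the appeal to \Cref{cor:strong-maximality-tls} to force $r_{\BN}(\Sigma^{<s_{\max}})\le r-1$ is misapplied: that corollary takes a tropical linear series as the \emph{smaller} set and a submodule of the same independence rank as the larger one; it says nothing about a proper submodule \emph{inside} a tropical linear series having strictly smaller Baker--Norine rank. Second, and more seriously, the paper explicitly notes (right after \Cref{rem:submodule-affine}) that it is \emph{not known} whether $\Sigma(-D')$ is finitely generated when $\Sigma$ is; since tropical linear series are by definition finitely generated, you cannot invoke the inductive hypothesis on $\Sigma(-x)$ without resolving this. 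Third, the existence of your point $p^*$ is not established: your heuristic that a top-slope function is ``affine on an initial segment in every other tangent direction'' is false in general (the constraint $D+\ddiv(\varphi)\ge 0$ bounds only the \emph{sum} of outgoing slopes at $p$, not each individually), and the alternative ``top-slope divisors form a proper closed subset of $|\Sigma|$'' presupposes that $s_\zeta$ takes at least two values on $\Sigma$, which for $r=1$ is exactly what you are trying to prove. The restriction-to-an-interval argument sidesteps all of these difficulties.
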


\begin{proof}
Suppose $\zeta$ is a tangent vector based at $v \in \Gamma$, and let $I \subseteq \Gamma$ be a closed interval with one endpoint at $v$ that contains $\zeta$.  By choosing $I$ sufficiently small, we may assume that $I \smallsetminus \{ v \}$ does not intersect the support of $D$, and each of the functions in a minimal generating set for $\Sigma$ is linear on $I$. Since $\Sigma|_I$ is a tropical linear series of dimension $r$, it follows that these generators have exactly $r+1$ distinct slopes on $I$, as required. 
\end{proof}

We now revisit an example of a graph of genus 4 that is not hyperelliptic but has infinitely many divisor classes of degree 3 and rank 1.  This contrasts with the classical fact that a non-hyperelliptic curve of genus 4 has either 1 or 2 divisor classes of degree 3 and rank 1.

\begin{example}
\label{Ex:LoopOfLoops}
Consider the genus 4 loop of loops $\Gamma$, pictured in \Cref{Fig:LOL}. Let $\ell_i$ be the length of the edge $[v_i, w_i]$. If $\ell_1 > \ell_2 > \ell_3$ and $\ell_2 + \ell_3 > \ell_1$, then $\Gamma$ has infinitely many divisor classes of degree 3 and rank 1, and no divisors of degree 2 and rank 1 \cite[Theorem~1.2]{LPP12}.  %
However, we claim that there is a unique divisor class $[D]$ on $\Gamma$ of rank 1 and degree 3 with the property that $R(D)$ contains a rank 1 tropical linear series and, moreover, this tropical linear series is unique. %

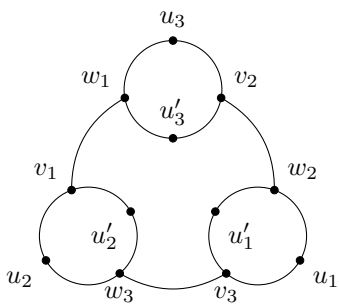
\begin{figure}[ht]
\begin{tikzpicture}[scale=1.3]
    \coordinate (w1) at ($(90:1) + (190:0.5)$);
    \coordinate (v1) at ($(210:1) + (110:0.5)$);
    \coordinate (w3) at ($(210:1) + (310:0.5)$);
    \coordinate (v3) at ($(330:1) + (230:0.5)$);
    \coordinate (w2) at ($(330:1) + (70:0.5)$);
    \coordinate (v2) at ($(90:1) + (350:0.5)$);
    
    \draw[fill=white] (90:1) circle (0.5);
    \draw[fill=white] (210:1) circle (0.5);
    \draw[fill=white] (330:1) circle (0.5);
    \draw (w1) to[bend right] (v1);
    \draw (w2) to[bend right] (v2);
    \draw (w3) to[bend right] (v3);

    \node[vertex, label={160:$v_1$}] at (v1) {};
    \node[vertex, label={160:$w_1$}] at (w1) {};
    \node[vertex, label={20:$v_2$}] at (v2) {};
    \node[vertex, label={20:$w_2$}] at (w2) {};
    \node[vertex, label={270:$v_3$}] at (v3) {};
    \node[vertex, label={270:$w_3$}] at (w3) {};
    
    \node[vertex, label={90:$u_3$}] at (90:1.5) {};
    \node[vertex, label={210:$u_2$}] at (210:1.5) {};
    \node[vertex, label={330:$u_1$}] at (330:1.5) {};
    \node[vertex, label={90:$u'_3$}] at (90:0.5) {};
    \node[vertex, label={210:$u'_2$}] at (210:0.5) {};
    \node[vertex, label={330:$u'_1$}] at (330:0.5) {};
    
\end{tikzpicture}
    \caption{The loop of loops of genus 4.}
    \label{Fig:LOL}
\end{figure}

Let $D$ be a divisor of the form $v_1 + w_3 + w$, where $w$ is a point on the edge $[v_2, w_2]$.  Let $x$ be the distance from $w$ to $v_2$, and assume $x \geq \ell_1 - \ell_2$.  One can show using Dhar's burning algorithm that every divisor of degree 3 and rank 1 on $\Gamma$ is equivalent to a divisor of this form. We omit the cumbersome case analysis. 
Observe that $|D|$ contains unique divisors
\[
D_1 = D + \ddiv(\varphi_1), \quad D_2 = D + \ddiv(\varphi_2), \quad \mbox{and} \quad D_3 = D + \ddiv(\varphi_3)
\]
such that $\supp(D_i)$ contains $u_i$, as shown in \Cref{Fig:FunctionsOnLOL}.
Suppose $\Sigma \subseteq R(D)$ is a tropical linear series of rank $1$.  Then $\Sigma$ must contain $\{ \varphi_1 , \varphi_2 , \varphi_3 \}$, and there must be a tropical dependence
\begin{equation} \label{eq:phi-depend}
\theta = \min\{ \varphi_1 + a_1, \varphi_2 + a_2, \varphi_3 + a_3 \}.
\end{equation}  
We claim that the existence of such a  tropical dependence determines the point $x$ in $[v_2, w_2]$.   

Consider the point on $[v_3, w_3]$ in the support of $D_3$, which has distance $\ell_1 - x$ from $w_3$.  No two of the functions $\varphi_i$ differ by a constant in a neighborhood of this point, so in the tropical dependence, all three functions must achieve the minimum in \eqref{eq:phi-depend} at this point.  Similarly, the point on $[v_1, w_1]$ in the support of $D_1$ has distance $\ell_3 - \ell_2 + x$ from $v_1$, and all three functions must achieve the minimum in \eqref{eq:phi-depend} at this point. The tropical dependence \eqref{eq:phi-depend} is illustrated in \Cref{Fig:LOLDependence}, with the loops labeled by the functions that achieve the minimum on them.  Since the function $\varphi_2$ takes the same value at these two points, it follows that $\ell_1 - x = \ell_3 - \ell_2 +x$.  Equivalently, $x = \frac{1}{2}(\ell_1 + \ell_2 - \ell_3)$.

Now, suppose $x = \frac{1}{2}(\ell_1 + \ell_2 - \ell_3)$. We show that $R(D)$ contains a unique tropical linear series of rank $1$. Let $\varphi_1$, $\varphi_2$, and $\varphi_3$ be as above. Note that there are unique divisors
\[
D'_1 = D + \ddiv(\varphi'_1), \quad D'_2 = D + \ddiv(\varphi'_2), \quad \mbox{and} \quad D'_3 = D + \ddiv(\varphi'_3)
\]
such that the support of $D'_i$ contains $u'_i$. The tropical submodule $\Sigma = \langle \varphi_1, \varphi'_1, \varphi_2, \varphi'_2, \varphi_3, \varphi'_3\rangle$ has Baker--Norine rank $r_\BN(\Sigma) = 1$. Moreover, any set of three functions from this generating set is tropically dependent. By \Cref{Lem:GeneratorDependence}, it follows that $r_\ind(\Sigma) = 2$, and hence $\Sigma$ is a tropical linear series of dimension $1$.  Any other tropical linear series of rank 1 in $R(D)$ must contain $\Sigma$. By     \Cref{prop:maximality-tls}, $\Sigma$ is the unique tropical linear series inside of $R(D)$.

\begin{figure}[ht]
\begin{tikzpicture}

\begin{scope}
    \coordinate (w1) at ($(90:1) + (190:0.5)$);
    \coordinate (v1) at ($(210:1) + (110:0.5)$);
    \coordinate (w3) at ($(210:1) + (310:0.5)$);
    \coordinate (v3) at ($(330:1) + (230:0.5)$);
    \coordinate (w2) at ($(330:1) + (70:0.5)$);
    \coordinate (v2) at ($(90:1) + (350:0.5)$);
    \draw[fill=white] (90:1) circle (0.5);
    \draw[fill=white] (210:1) circle (0.5);
    \draw[fill=white] (330:1) circle (0.5);
    
    \draw (w1) to[bend right]
        node[divisor, pos=0.7, label={150:1}] {} (v1);
    \draw (w2) to[bend right] (v2);
    \draw (w3) to[bend right] (v3);

    \node[divisor, label={330:$2$}] at (330:1.5) {};
    \node[label={$D_1$}] at (0, -2) {};
\end{scope}

\begin{scope}[shift={(4, 0)}]
    \coordinate (w1) at ($(90:1) + (190:0.5)$);
    \coordinate (v1) at ($(210:1) + (110:0.5)$);
    \coordinate (w3) at ($(210:1) + (310:0.5)$);
    \coordinate (v3) at ($(330:1) + (230:0.5)$);
    \coordinate (w2) at ($(330:1) + (70:0.5)$);
    \coordinate (v2) at ($(90:1) + (350:0.5)$);
    \draw[fill=white] (90:1) circle (0.5);
    \draw[fill=white] (210:1) circle (0.5);
    \draw[fill=white] (330:1) circle (0.5);
    
    \draw (w1) to[bend right] (v1);
    \draw (w2) to[bend right]
        node[divisor, pos=0.7, label={30:1}] {} 
        (v2);
    \draw (w3) to[bend right] (v3);

    \node[divisor, label={210:$2$}] at (210:1.5) {};

    \node[label={$D_2$}] at (0, -2) {};
\end{scope}

\begin{scope}[shift={(8, 0)}]
    \coordinate (w1) at ($(90:1) + (190:0.5)$);
    \coordinate (v1) at ($(210:1) + (110:0.5)$);
    \coordinate (w3) at ($(210:1) + (310:0.5)$);
    \coordinate (v3) at ($(330:1) + (230:0.5)$);
    \coordinate (w2) at ($(330:1) + (70:0.5)$);
    \coordinate (v2) at ($(90:1) + (350:0.5)$);
    \draw[fill=white] (90:1) circle (0.5);
    \draw[fill=white] (210:1) circle (0.5);
    \draw[fill=white] (330:1) circle (0.5);
    
    \draw (w1) to[bend right] (v1);
    \draw (w2) to[bend right] (v2);
    \draw (w3) to[bend right] 
        node[divisor, pos=0.3, label={60:1}] {} 
        (v3);

    \node[divisor, label={90:$2$}] at (90:1.5) {};
    \node[label={$D_3$}] at (0, -2) {};
\end{scope}

\end{tikzpicture}
\caption{The divisors corresponding to the functions $\varphi_1 , \varphi_2,$ and $\varphi_3$ of \Cref{Ex:LoopOfLoops}.}
\label{Fig:FunctionsOnLOL}
\end{figure}
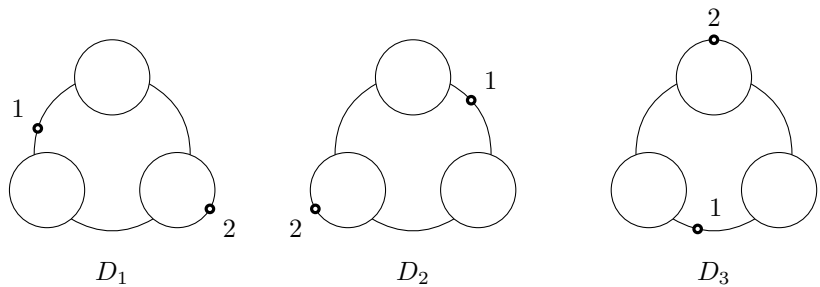

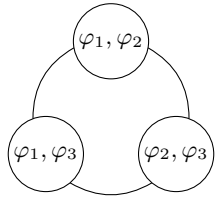
\begin{figure}[ht]

\begin{tikzpicture}
    \coordinate (w1) at ($(90:1) + (190:0.5)$);
    \coordinate (v1) at ($(210:1) + (110:0.5)$);
    \coordinate (w3) at ($(210:1) + (310:0.5)$);
    \coordinate (v3) at ($(330:1) + (230:0.5)$);
    \coordinate (w2) at ($(330:1) + (70:0.5)$);
    \coordinate (v2) at ($(90:1) + (350:0.5)$);
    
    \draw[fill=white] (90:1) circle (0.5);
    \draw[fill=white] (210:1) circle (0.5);
    \draw[fill=white] (330:1) circle (0.5);

    \draw (90:1) node {{\small $\varphi_1, \varphi_2$}};
    \draw (210:1) node {{\small $\varphi_1, \varphi_3$}};
    \draw (330:1) node {{\small $\varphi_2, \varphi_3$}};
    
    \draw (w1) to[bend right] (v1);
    \draw (w2) to[bend right] (v2);
    \draw (w3) to[bend right] (v3);
\end{tikzpicture}

\caption{The dependence between $\varphi_1, \varphi_2,$ and $\varphi_3$.}
\label{Fig:LOLDependence}
\end{figure}

\end{example}

\section{Open problems and questions}
\label{Sec:Questions}

In the introduction we posed \Cref{q:tropical-implies-matroidal}, asking whether every tropical linear series is a matroidal linear series. The answer is affirmative if $\Sigma\subseteq R(D)$ is a tropical linear series of dimension one where $D$ is a divisor of degree two on an interval $\Gamma=[v,w]$. Even in this case, some nontrivial calculation is already needed \cite[Example~6.10]{M23}, and we can show that $\Sigma$ arises in the manner described in \Cref{thm:intervalProof}. 
We pose the following variant for tropical linear series on an interval.
\begin{question}\label{qst:TLS-interval}
Does every tropical linear series on an interval or loop $\Gamma$ arise from the constructions in \Cref{thm:intervalProof} and \Cref{Thm:loop}, respectively?%
\end{question}

\medskip

In \Cref{rem:BN-bound}, we explained that any metric graph $\Gamma$ of first Betti number $g$ has a divisor $D$ of degree $d$ with a tropical linear series $\Sigma \subseteq R(D)$ of dimension $r$, whenever $(r+1)(g-d+r) \leq g$, and how this follows from the Brill--Noether theorem. Similarly, it follows from the Riemann--Roch inequality that there is a tropical linear series $\Sigma \subseteq R(D)$ of dimension $r$, whenever $r \leq d - g$, for \emph{any} divisor $D$ of degree $d$. It would be interesting to find a proof of any such statement about the existence of positive dimensional tropical linear series that does not depend on algebraic geometry.

\begin{problem}
Give a direct combinatorial proof of existence theorems for tropical linear series that follow from the Riemann--Roch and Brill--Noether theorems.
\end{problem}

\noindent An interesting special case would be to give a combinatorial proof of the existence of a tropical linear series of dimension $g-1$ contained in $R(K_\Gamma)$.

The Baker--Norine rank $r(D)$ for a divisor on a graph famously satisfes the precise analog of the Riemann--Roch theorem \cite{BakerNorine07} as well as the specialization inequality \cite{Baker08}. Subsequently, other rank functions for divisors on graphs were discovered that satisfy their own Riemann--Roch theorems and strengthen the specialization inequality \cite{CaporasoLenMelo15, BarbosaChristMelo25}.  

\begin{question}
Is there a rank function defined in terms of tropical or matroidal linear series that satisfies an analog of the Riemann--Roch theorem?
\end{question}

\noindent One might consider, for instance, the function taking a divisor $D$ to the largest dimension of a tropical or matroidal linear series contained in $R(D)$, which is smaller than $r(D)$ yet still satisfies the specialization inequality.

\bigskip

As discussed in the introduction, many advances in this research area were spurred by relations to the Brill--Noether theory of algebraic curves, following the program initiated by Baker in \cite{Baker08}. In Brill--Noether theory, one studies the moduli space $G^r_d(X)$ of linear series of degree $d$ and dimension $r$ on a general curve $X$, along with its image $W^r_d(X)$ in $\Pic_d(X)$. For metric graphs, a tropical analog $W^r_d(\Gamma) \subseteq \Pic_d(\Gamma)$ is studied in \cite{LPP12}. It is, in particular, a polyhedral complex with a well-defined dimension and a \emph{rank}, analogous to the Baker--Norine rank. %
However, there is not yet any tropical analog of $G^r_d(X)$, because we did not previously have a suitable analog of incomplete linear series, i.e., proper subspaces of $H^0(X, \cO(D_X))$.

\begin{problem}
Construct a moduli space of tropical or matroidal linear series on a metric graph $\Gamma$ and study its geometric properties, including dimension and analogs of the Baker--Norine rank.
\end{problem}

\noindent The following special case is related to the Riemann--Roch questions, above.

\begin{problem} \label{prob:canonical}
Let $\Gamma$ be a metric graph of genus at least 2 with no 1-valent vertices.  Classify the tropical linear series of rank $g-1$ that are contained in $R(K_\Gamma)$.
\end{problem}

\noindent It follows from \Cref{cor:strong-maximality-tls} that $R(K_\Gamma)$ is the unique such tropical linear series whenever $|K_\Gamma|$ has dimension $g-1$. Example~\ref{ex:canonical} is a  first step towards such a classification in the remaining cases, by determining the local matroid at $K_\Gamma$. The ideas and results of \Cref{Sec:Canonical} are also relevant, but many subtleties remain. There may be tropical linear series that are realizable in residue characteristic $p > 0$, but not in equicharacteristic zero, and these may or may not be contained in the equicharacteristic zero realizable locus determined by M\"oller, Ulirsch, and Werner. Also, aside from the local structure at $K_\Gamma$, the possibilities are wide open when the realizable locus has dimension greater than $g-1$, as in \Cref{ex:hyperelliptic-chain}.

Our interest in tropical linear series was partly motivated by the study of multiplication maps via tropical independence in \cite{tropicalGP, JensenPayne16, MRC2, M13, M23}. Here, the multiplication for linear series $(D_X, V)$ and $(D'_X, V')$ is the linear map
\[
V \otimes V' \to H^0(X, \cO(D_X + D'_X))
\]
induced by tensor product of global sections or, equivalently, multiplication of rational functions.

Many fundamental questions about the geometry of algebraic curves can be expressed in terms of ranks of maps between linear series, and one can give lower bounds on these ranks by proving tropical independence of collections of functions in the image. %
Thus, there is significant interest in understanding how multipliciation maps behave for tropical and matroidal linear series.

\begin{question} \label{q:multiplication-map}
Given tropical linear series $\Sigma_1 \subseteq R(D_1)$ and $\Sigma_2 \subseteq R(D_2)$ on a metric graph $\Gamma$, define the multiplication map $\mu \colon \Sigma_1 \times \Sigma_2 \to R(D_1 + D_2)$ by $\mu (\varphi_1 , \varphi_2) = \varphi_1 + \varphi_2$.
 Is there a tropical linear series $\Sigma \subseteq R(D_1 + D_2)$  that contains the image of $\mu$?
\end{question}

When $\Sigma_1$ and $\Sigma_2$ are tropicalizations of linear series, the answer is affirmative, and the tropical linear series can be chosen with dimension at most $r_\ind(\Sigma_1)r_\ind(\Sigma_2) -1$, because the image of the tropical multiplication map is contained in the tropicalization of the image of the classical multiplication map. However, the answer is unclear when $\Sigma_1$ or $\Sigma_2$ is not realizable.

\Cref{q:multiplication-map} is an important special case of the more general problem of finding a tropical linear series that contains a given subset or tropical submodule of $R(D)$.

\begin{question}
Is there a procedure that determines whether a tropical module $\Sigma \subseteq R(D)$ is contained in a tropical linear series of a given rank?
\end{question}

The main combinatorial results of \cite{M23} include a classification of strongly recursive tropical linear series on a chain of $g$ loops with bridges $\Gamma$ with edge lengths that satisfy a specified admissibility condition and a proof that if  
$g = 22$ or $23$ and $\Sigma$ is a strongly recursive tropical linear series of dimension 6 and degree 25 or 26, respectively, then the image of the multiplication map
\[
\mu \colon \Sigma \times \Sigma \to \PL(\Gamma) 
\]
contains a tropically independent subset of size $28$.  In particular, the image cannot be contained in any tropical linear series of dimension less than 27. The corresponding cases of the strong maximal rank conjecture of Aprodu and Farkas follow immediately, and this was an essential step in the proof that the moduli spaces $\overline M_{22}$ and $\overline M_{23}$ are of general type.

\begin{question}
Let $\Gamma$ be a chain of $22$ or $23$ loops with bridges with admissible edge lengths, and let $\Sigma \subseteq \PL(\Gamma)$ be a tropical linear series of dimension $6$ and degree $25$ or $26$ respectively. Does every tropical linear series that contains the image of $\mu \colon \Sigma \times \Sigma \to \PL(\Gamma)$ have dimension at least 27?
\end{question}

We have seen that $\Star(D)$ is the Bergman fan of the local matroid $M_{\Sigma}$ when $D \in |\Sigma|$ and $\Sigma \subseteq R(D)$ is nondegenerate.

\begin{question}
    What are the possibilities for the local structure of $|\Sigma|$ at a degenerate divisor?
\end{question}

For any divisor $D\in |\Sigma|$, the set $\cL = \{F_\varphi:\varphi\in \Sigma\} \cup \{E\}$ is a lattice. If $D$ is nondegenerate, this is the lattice of flats of a matroid that controls the local structure of $|\Sigma|$ at $D$. In general, there is a surjective map from a small neighborhood of $D$ in $|\Sigma|$ to $\cL$, but essential information may be lost if $D$ is degenerate. For instance, let $p$ be the midpoint of the interval $\Gamma=[v,w]$ and $D=2p$. The tropical module $R(D)$ contains the functions $\varphi_1$ and $\varphi_2$ where $\ddiv\varphi_1+D=p+w$ and $\ddiv\varphi_2+D=2w$. Then $\varphi_1$ and $\varphi_2$ attain their minimum at the same place, so they are mapped to the same element in $\cL$. However, for any $\epsilon>0$, $\min\{\varphi_1,\epsilon\}\neq \min\{\varphi_2,\epsilon\}$. They are not determined by their image in $\cL$, in contrast to \Cref{thm:local-structure}. If there is a nice way to describe the local structure of $\Sigma$ at a degenerate divisor, then one might hope to `glue' all the local pictures together and get a global parametrization and answer \Cref{qst:TLS-interval}.

\begin{question}
Let $\Sigma \subseteq R(D)$ be a matroidal linear series with parametrization $\cV \twoheadrightarrow \Sigma$, and let $\pi \colon \cV \to |D|$ denote the composition with projectivization. What are the properties of the submodules $\pi^{-1}(D)$ for $D \in |\Sigma|$, and how do they differ depending on whether or not $D$ is degenerate?
\end{question}

\appendix
\section{Tropicalizations of linear series are matroidal} \label{sec:realizablematroidal}

In this Appendix, we give a brief account of tropicalizations of linear series on algebraic curves. In particular, we give a self-contained proof that tropicalizations of linear series are matroidal.  This argument first appeared in the proof that tropicalizations of linear series are finitely generated as tropical modules \cite[Proposition~6.4]{M23}. The material in this appendix is not logically necessary for the main results of this paper, but it is an essential motivation for investigating the relations between tropical linear series and matroids. 

\subsection{Skeletons of semi-stable models} Let $X$ be a smooth projective algebraic curve over a field $K$ with a nontrivial valuation $\val \colon K \to \Rbar$.  For simplicity, we assume $K$ is algebraically closed and spherically complete, and $\val$ is surjective. With these assumptions, the Berkovich analytification $X^\an$ is set-theoretically a disjoint union of two subsets:
\begin{itemize}
\item the type I points $X(K)$, and
\item the type II points, which are the extensions of $\val$ to valuations on the function field $K(X)$.
\end{itemize}
Let $R \subseteq K$ be the valuation ring. By the semistable reduction theorem, there are models $\fX \to \Spec R$ with generic fiber $\fX_\eta \cong X$ and special fiber $\overline \fX$ a nodal curve. %
Formally locally near each node, the total space of $\fX$ looks like $\VV(xy-t) \subseteq \AA^2_R$ for some $t$ in the maximal ideal of $R$, and $\val(f)$ is independent of the choice of local coordinates.  The corresponding \emph{skeleton} $\Gamma_{\fX}$ is a metric graph with vertices and edges corresponding to the irreducible components and nodes of $\overline \fX$, respectively. The length of the edge corresponding to a node with local defining equation $xy-t$ is $\val(t)$.

There is a natural inclusion $\Gamma_\fX \subseteq X^\an$ with image contained in the subset of type II points, and a natural retraction $X^\an \to \Gamma_\fX$.  A formal blowup $\fX' \to \fX$ induces a continuous, surjective map $\Gamma_{\fX'} \to \Gamma_{\fX}$ compatible with the retraction maps from $X^\an$, and the induced map
\[
X^\an \xrightarrow{\sim} \varprojlim_{\fX} \Gamma_{\fX}
\]
is a homeomorphism.  

Let $\Gamma = \Gamma_{\fX}$ be such a skeleton. When the skeleton is fixed, the retraction $X^\an \to \Gamma$ is called the \emph{tropicalization} map. Since each point in $\Gamma$ represents a valuation on the function field $K(X)$, for any nonzero rational function $f\in K(X)$ we obtain a function denoted $\trop(f)$ on $\Gamma$ by evaluation. The function $\trop(f)$ is continuous and piecewise-linear, with integer slope on each edge. 

The bend-locus of such a piecewise-linear function is the analog of the zeros and poles of a rational function, with the analog of order of vanishing given by the sum of the incoming slopes at a point. These notions are compatible with tropicalization, i.e., the sum of the incoming slopes of $\trop(f)$ at a point is the sum of the multiplicities of the zeros and poles of $f$ that tropicalize to that point. See, e.g., \cite[Remark~5.4]{BakerRabinoff15}. 

\subsection{Tropicalizations of linear series} \label{subsec:tropicalizations-linser}

With $X$ and $\Gamma = \Gamma_\fX$ as above, consider a linear series $V \subseteq H^0(X, \cO(D_X))$. We identify nonzero sections of $\cO(D_X)$ with rational functions $f \in K(X)$ such that $\ddiv(f) + D_X \geq 0$. Then $D_X$ tropicalizes to a divisor $D$ on $\Gamma$, and $V$ tropicalizes to 
\[
\Sigma = \{ \trop(f) : f \in V \smallsetminus \{0\}\}.
\]

\begin{proposition} \label{prop:matroidparam}
Let $(D,\Sigma)$ be the tropicalization of a linear series $(D_X, V)$ of dimension $r$. Then $\Sigma$ is a tropical linear series and is the image of a surjective homomorphism of tropical modules $\cV \twoheadrightarrow \Sigma$, where $\cV$ is a realizable valuated matroid of rank $r + 1$.
\end{proposition}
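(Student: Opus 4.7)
The strategy is to handle the two assertions of the proposition separately. To see that $\Sigma$ is a tropical linear series of dimension $r$, I would combine three inequalities on its ranks. Baker's specialization lemma gives $r_{\BN}(\Sigma) \geq r$. Any tropically independent finite subset of $\Sigma$ lifts to a linearly independent subset of $V$, so $r_{\ind}(\Sigma) \leq \dim_K V = r + 1$. Finally, $|\Sigma|$ is the image under the continuous tropicalization map of the algebraic cycle $\mathbb{P}(V) \hookrightarrow \Sym^d(X)^{\an}$, and is therefore a closed polyhedral subset of $\Sym^d(\Gamma)$; \Cref{cor:tropical-rank-properties-3} then yields $r_{\ind}(\Sigma) = \dim|\Sigma| + 1 \geq r_{\BN}(\Sigma) + 1$. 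Together these three inequalities force $r_{\BN}(\Sigma) = r$ and $r_{\ind}(\Sigma) = r + 1$, establishing that $\Sigma$ is a tropical linear series of dimension $r$.

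For the matroidal parametrization, I would construct $\cV$ as the tropicalization of a linear embedding of $V$ into affine space. Specifically, choose classical points $p_1, \ldots, p_n \in X(K) \smallsetminus \supp(D_X)$ together with local trivializations of $\cO(D_X)$ near each, producing a $K$-linear evaluation map
$$\phi \colon V \to K^n, \qquad f \mapsto (f(p_1), \ldots, f(p_n)).$$
For $n$ large and the $p_j$ sufficiently general, $\phi$ is injective, so $V' := \phi(V) \subseteq K^n$ is a linear subspace of dimension $r + 1$ and $\cV := \Trop(V') \subseteq \Rbar^n$ is by construction a realizable valuated matroid of rank $r + 1$.

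The heart of the proof is then to produce a surjective tropical module homomorphism $\Phi \colon \cV \twoheadrightarrow \Sigma$ with $\Phi(\trop(\phi(f))) = \trop(f)$ for every $f \in V \smallsetminus \{0\}$. The key observation is that the $j$-th coordinate of $\trop(\phi(f))$ equals $\val(f(p_j)) = \trop(f)(q_j)$, where $q_j := \trop(p_j) \in \Gamma$, so $\trop(\phi(f))$ records the values of the PL function $\trop(f)$ at the sample points $q_j$. My plan is to choose the $p_j$ densely enough that these sample values determine $\trop(f)$ via a tropical interpolation formula of the form $(a_1, \ldots, a_n) \mapsto \min_j\{a_j + \psi_j\}$ for certain auxiliary PL functions $\psi_j \in \PL(\Gamma)$ that are compatible with $D$; any such formula is automatically a tropical module homomorphism from $\Rbar^n$ into $R(D)$, and restricts to the desired map on $\cV$.

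The hard part will be to select both the sample points and the interpolants simultaneously so that (i) the formula is well-defined on all of $\cV$, (ii) it recovers $\trop(f)$ at every $\trop(\phi(f))$, and (iii) its image is contained in $\Sigma$. Finite-dimensionality of $V$ and the bounded slopes dictated by $D$ restrict the combinatorial complexity of functions in $\Sigma$, making such a choice possible for sufficiently many generic sample points. Surjectivity of $\Phi$ onto $\Sigma$ is then automatic, since every element of $\Sigma$ is of the form $\trop(f)$ for some $f \in V$ and so equals $\Phi(\trop(\phi(f)))$.
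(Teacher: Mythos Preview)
Your argument for the first assertion—that $\Sigma$ is a tropical linear series of dimension $r$—is correct and matches the paper's reasoning (which is sketched in the remark following \eqref{eq:ranks}).

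For the matroidal parametrization, however, your ``key observation'' is false, and this breaks the proposed interpolation scheme. You claim that $\val(f(p_j)) = \trop(f)(q_j)$ where $q_j = \trop(p_j)$. In fact one only has the inequality $\val(f(p_j)) \geq \trop(f)(q_j)$, and equality cannot hold for all $f \in V$ simultaneously at a fixed type~I point $p_j$: whenever $\dim V \geq 2$ there exist nonzero $f \in V$ with $f(p_j) = 0$, so $\val(f(p_j)) = \infty$ while $\trop(f)(q_j)$ is finite. Thus the vector $\trop(\phi(f))$ does \emph{not} record the values of $\trop(f)$ at the sample points, and your interpolation formula $\min_j\{a_j + \psi_j\}$ has no reason to recover $\trop(f)$. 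The problem is structural: evaluation at type~I points produces coordinates that jump to $\infty$ on hyperplanes of $V$, whereas $\trop(f)$ is finite for every nonzero $f$.

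The paper avoids this by embedding $V$ into affine space differently. After subdividing so that $\supp(D)$ lies in the vertex set, each open edge $e$ of length $\ell$ corresponds to an annulus on which $f$ has a Laurent expansion $f = \sum_n \alpha_n t^n$, and one has the exact formula $\trop(f)|_e(x) = \min_{|n| \leq d}\{nx + \val(\alpha_n)\}$. The map $f \mapsto (\alpha_n)_{|n|\leq d,\, e}$ is $K$-linear into $K^{(2d+1)s}$ (where $s$ is the number of edges), and the displayed formula is visibly a tropical module homomorphism from $\Rbar^{(2d+1)s}$ to $R(D)$. Taking $\cV$ to be the tropicalization of the image of $V$ under this Laurent-coefficient map gives the required factorization. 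The point is that Laurent coefficients, unlike point evaluations, carry enough information to reconstruct $\trop(f)$ via an explicit tropical-linear formula.
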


\noindent This fact first appeared as a step in the proof that the tropicalization of a linear series is finitely generated as a tropical module \cite[Proposition~6.4]{M23}. Since it was not stated as a separate proposition in loc. cit., and since it is essential motivation for studying relations between tropical linear series and valuated matroids, we present it here in this appendix with a self-contained proof. Our proof will use the decomposition of $X^\an$ into nonarchimedean analytic balls and annuli induced by the semistable model $\fX$.

\subsection{Semistable decompositions}

We now recall the decomposition of $X^\an$ into nonarchimedean analytic discs and annuli induced by the semistable model $\fX$, following \cite[\S3]{BPR13}.

The skeleton $\Gamma = \Gamma_{\fX}$ comes with a distinguished vertex set $W \subseteq \Gamma$ corresponding to the irreducible components of the special fiber $\overline \fX$. The complement $X^\an \smallsetminus W_\fX$ is the  disjoint union of its connected components. These connected components consist of infinitely many nonarchimedean analytic discs, one for each smooth closed point of $\overline \fX$, and finitely many nonarchimedean analytic annuli, one for each node of $\overline \fX$.  This decomposition is compatible with tropicalization, i.e., each smooth closed point of $\overline \fX$ is contained in a unique irreducible component of $\fX$, and the corresponding disc tropicalizes to that vertex in $W_{\fX}$.  Similarly, the nodes of $\overline \fX$ correspond to the edges of $\Gamma$, and the annulus corresponding to a node of $\fX$ is exactly the preimage under tropicalization of the interior of the corresponding edge. We note also that the length of an edge is the logarithmic modulus of the corresponding annulus.

Furthermore, any finite set of type II points that contains the vertex set $W_\fX$ is the vertex set of some semistable model obtained as a formal blowup $\fX' \to \fX$. In particular, any subdivision of $\Gamma$ is the skeleton associated to a semistable model of $\fX$.

\subsection{Proof of \Cref{prop:matroidparam}}

In the introduction, we have discussed the fact that the tropicalization of a linear series is a tropical linear series of the same dimension.  It remains to show that this tropical linear series is matroidal, i.e., there is a valuated matroid $\cV$ of rank $r + 1$ and a surjective homomorphism of tropical modules $\cV \twoheadrightarrow \Sigma$.  We do so as follows.

First, since any subdivision of $\Gamma$ is the skeleton of some formal model $\fX' \to \fX$, we may assume that the vertex set $W$ contains $\trop(\supp(D_X))$.  Next, by continuity, the tropicalization of any nonzero rational function $f \in K(X)^\times$ is determined by its restriction to $\Gamma \smallsetminus W$, i.e., the union of the open edges of $\Gamma$. 

Let $e \subseteq \Gamma$ be an open edge of length $\ell$.  Then $\trop^{-1}(e)$ is analytically isomorphic to the standard open annulus $U_\ell$ of logarithmic modulus $\ell$, i.e., the preimage of $(0,\ell)$ under the standard tropicalization map $\GG_m^\an \to \RR$.  

Since the vertex set $W$ contains the tropicalization of $\supp(D_X)$, any $f \in H^0(X, \cO(D_X))$ is regular on $\trop^{-1}(e) \cong U_\ell$.  Let $t$ be the coordinate on $U_\ell \subseteq \GG_m^\an$.  Then any regular function on $U_\ell$ has a Laurent series expansion
\[
f = \sum_{n=-\infty}^\infty \alpha_n t^n,
\]
with $\alpha_i \in K$.  The convergence condition on $U_\ell$ says that $\lim \val (\alpha_n) = \lim (\ell n + \val(\alpha_n)) = \infty$, i.e., for any $N > 0$, we have $\val(\alpha_n) > N$ and $\ell n + \val(\alpha_n) > N$ for all but finitely many integers $n$.

In terms of this power series expansion, and the identification $e \cong (0,\ell)$, we have
\[
\trop(f)|_{e}(x) = \min_{n \in \ZZ} \{n x + \val(\alpha_n) \}, \mbox{ for } x \in (0,\ell).
\]
This expresses $\trop(f)|_e$ as the minimum of a countable collection of affine linear functions with distinct integer slopes.  However, the slope of any function in the complete tropical linear system $R(D)$ is bounded in absolute value by $d = \deg(D)$ \cite[Lemma~7]{HMY12}. Thus, we have 
\[
\trop(f)|_{e}(x) = \min_{|n| \leq d} \{n x + \val(\alpha_n) \}.
\]

We have shown that $\trop(f)|_e$ is determined by the valuations of the $2d+1$ coefficients $\alpha_{-d}, \ldots, \alpha_d$ in the corresponding power series expansion of $f$ on $U_\ell$, for $\ell = \ell(e)$. Say $\Gamma$ has $s$ edges.  Applying this argument to each edge, we see that $\trop(f)$ is globally determined by the valuations of $(2d+1)s$ coefficients in the $s$ corresponding Laurent series expansions. Altogether, these coefficients give a linear embedding $V \subseteq K^{(2d+1)s}$. Let $\cV$ be the image of $V$ under the tropicalization map to $\Rbar^{(2d+1)s}$.  By construction, $\cV$ is a realizable valuated matroid of rank $r + 1$ and the tropicalization map on $V$ factors as  
\begin{center}
\begin{tikzcd}
V \arrow[rd, rightarrow, "\trop"'] \arrow[r] & \cV \arrow[d] \\
{}  & \Sigma.
\end{tikzcd}
\end{center}
The map $\cV \to \Sigma$ is a homomorphism of tropical modules, by construction, and the commutativity of the diagram shows that it is surjective, as required.
\hfill $\Box$

\bibliography{TLS}

\end{document}